\newtheorem{theorem}{Theorem}[section]
\newtheorem{lemma}[theorem]{Lemma}
\newtheorem{corollary}[theorem]{Corollary}
\newtheorem{proposition}[theorem]{Proposition}
\theoremstyle{definition}
\theoremstyle{remark}
\def \cA {\mathcal{A}}
\def \cB {\mathcal{B}}
\def \cE {\mathcal{E}}
\def \cM {\mathcal{M}}
\def \cN {\mathcal{N}}
\def \cP {\mathcal{P}}
\def \cQ {\mathcal{Q}}
\def \cR {\mathcal{R}}
\def \cS {\mathcal{S}}
\def \cT {\mathcal{T}}
\def \cW {\mathcal{W}}
\def \a {\alpha}
\def \b {\beta}
\def \g {\gamma}
\def \d {\delta}
\def \e {\varepsilon}
\def \t {\theta}
\def \k {\kappa}
\def \l {\lambda}
\def \s {\sigma}
\def \T {\Theta}
\def \L {\Lambda}
\def \N {\mathbb{N}}
\def \R {\mathbb{R}}
\def \Z {\mathbb{Z}}
\def \ra {\rightarrow}
\def \la {\leftarrow} 
\def \lra {\longrightarrow}
\def \Ra {\Rightarrow}
\def \Lra {\Longrightarrow}
\def \lma {\longmapsto}
\def \Al {{\mathcal A}^\ell}
\def \Alm {\big({\mathcal A}^\ell\big)^m}
\def \Ot {(O_t)_{t\geq 0}}
\def \Tt {(\T_t)_{t\geq 0}}
\def \Ott {(O^{\t}_t)_{t\geq 0}}
\def \Otl {(O^{\ell}_t)_{t\geq 0}}
\def \Otk {(O^{K+1}_t)_{t\geq 0}}
\def \Ztt {(Z^{\t}_t)_{t\geq 0}}
\def \Ztl {(Z^{\ell}_t)_{t\geq 0}}
\def \Ztk {(Z^{K+1}_t)_{t\geq 0}}
\def \pml {\cP^m_{\ell +1}}
\def \zl {\{\, 0,\dots,\ell \,\}}
\def \zk {\{\, 0,\dots,K \,\}}
\def \zm {\{\, 0,\dots,m \,\}}
\def \ote {o^\theta_{\text{exit}}}
\def \cW {{\mathcal W}^*}
\def \ozk {o(0)+\cdots +o(K)}
\def \exa {e^{-a}}
\def \xt {(X_t)_{t\geq 0}}
\def \ztk {(Z^\t_t(k))_{t\geq 0}}
\def \zt {(Z^\t_t)_{t\geq 0}}
\def \Dt {{(D_t)_{t\geq 0}}}
\def \tk {{\tau^*_K}}
\def \ul {\{\,1,\dots,\ell\,\}}
\def \zk {\{\,0,\dots,K\,\}}
\def \kl {\{\,K+1,\dots,\ell\,\}}
\def \ltq {\ell^{3/4}}
\def \cMH {{\mathcal M}_H}
\def \um {\{\,1,\dots,m\,\}}
\def \lk {\ell_\kappa}
\def \card {\text{card}\,}
\def \cMH {{\mathcal M}_H}
\def \mo {{\mu_O}}
\def \mol {{\mu_O^\ell}}
\def \mok {{\mu_O^{K+1}}}
\def \ztl {{(Z^L_t)_{t\geq 0}}}
\def \ztu {{(Z^U_t)_{t\geq 0}}}
\def \Zt {{(Z_t)_{t\geq 0}}}
\def \oe {{o_{\text{exit}}}}
\begin{document}
%
%%\maketitle
%
%\begin{abstract}
%\noindent
%We consider the Moran model on the sharp peak landscape,
%in the asymptotic regime studied in~\cite{Cerf},
%where a quasispecies is formed. 
%We find explicitly the distribution of this quasispecies.
%\end{abstract}
%
%%\newpage
%%\tableofcontents

\begin{center}
\begin{LARGE}
The distribution of the quasispecies for a Moran model on the sharp peak landscape
\end{LARGE}

\begin{large}
Rapha\"el Cerf and Joseba Dalmau

\vspace{-12pt}
Universit\'e Paris Sud and ENS Paris
%
%and 
%
%Joseba Dalmau
%
%\vspace{-12pt}
%Universit\'e Paris Sud and ENS Paris

\vspace{4pt}
\today
\end{large}
\end{center}

\begin{abstract}
\noindent
We consider the Moran model on the sharp peak landscape,
in the asymptotic regime studied in~\cite{Cerf},
where a quasispecies is formed. 
We find explicitly the distribution of this quasispecies.
\end{abstract}

%\newpage
\section{Introduction}\label{Intro}
In his paper \cite{Eigen1}, 
Eigen introduced the model of quasispecies
to describe the evolution of a population of macromolecules
which is subject to two main forces:
mutation and selection.
The model was developed further in a series of papers by Eigen and Schuster
\cite{ES1,ES2,ES3},
and analysed in great detail by Eigen, McCaskill and Schuster in \cite{EMS}.
A major conclusion is that this kind of evolutionary process,
rather than selecting a single dominant species,
is more likely to select a master sequence 
(the macromolecule with the highest fitness)
along with a cloud of mutants that closely resemble the master sequence.
Hence the name quasispecies.
One other major discovery that Eigen made
on this model
was the existence of an error threshold 
allowing a quasispecies to form:
if the mutation rate exceeds the error threshold, 
then the population evolves towards a totally random state, 
whereas if the mutation rate is below the error threshold,
a quasispecies can be formed.

Even if Eigen's original goal 
was to explain the behaviour 
of a population of macromolecules,
the theory of quasispecies 
rapidly extended to other areas of biology.
In particular, experimental studies support the validity of the model
in virology \cite{Domingo}.
Some RNA viruses are known to have very high mutation rates,
like the HIV virus,
and this is a factor of resistance against conventional drugs.
A promising strategy to combat this kind of viruses consists in
developing mutagenic drugs 
that would increase the mutation rate 
beyond the error threshold,
in order to induce an error catastrophe
\cite{ADL,TBVD}.
This strategy has successfully been applied 
to several types of RNA viruses \cite{CCA}.
Moreover, several similarities have been observed 
between the evolution of cancer cell populations and RNA viruses,
in particular, the possibility of inducing an error catastrophe \cite{SD}.

Two important features of Eigen's model are its deterministic nature
(the model is based on a system of differential equations
derived from certain chemical and physical laws)
and the fact that the population is considered to be infinite.
When dealing with simple macromolecules,
these assumptions are quite natural.
Nevertheless, they become unrealistic 
if we want to apply this model to population genetics,
and they are two of the major drawbacks when applying it to virus populations,
as pointed out by Wilke \cite{Wilke}.
On one hand,
we have to take into account the stochastic nature of the evolution of a finite population.
The higher the complexity of the individuals,
the harder it is to explain the replication and mutation schemes via chemical reactions.
This fact, 
together with the widely recognised role of randomness in evolutionary processes 
strongly suggest a stochastic approach to the matter.
On the other hand, 
when dealing with populations of complex individuals,
the amount of possible genotypes 
largely exceeds the size of the population.
Therefore,
if we want to use Eigen's model in population genetics,
a finite and stochastic version of the model is called for.

The interest of a finite stochastic counterpart to Eigen's model
is not new. 
Eigen, McCaskill and Schuster 
already emphasise the importance of developing such a model \cite{EMS}, 
so does Wilke in the more recent paper \cite{Wilke}.
Several researchers have pursued this task. 
Demetrius, Schuster and Sigmund \cite{DSS} 
introduce stochasticity into Eigen's model using branching processes.
McCaskill \cite{McCaskill} also develops a stochastic version of Eigen's model. 
Nowak and Schuster \cite{NS} 
use birth and death Markov processes 
to give a finite stochastic version of Eigen's model 
on the sharp peak landscape. 
Alves and Fontanari \cite{AF} 
study the dependence of the error threshold 
on the population size 
for the single sharp peak replication landscape.
Saakian, Deem and Hu \cite{SAA1} compute the variance of the mean fitness in
a finite population model in order to control how it approximates the
infinite population model. Deem, Mu\~noz and Park \cite{PEM} use a field
theoretic representation in order to derive analytical results.
Other recent papers introduce finite stochastic models 
that approach Eigen's model asymptotically when the population size goes to $\infty$, 
like Musso \cite{Musso} 
or Dixit, Srivastava, Vishnoi \cite{DSV}.

In \cite{Cerf}, Cerf studies a population of size $m$
of chromosomes of length $\ell$
over an alphabet $\cA$ of cardinality $\k$ 
evolving according to a Moran model \cite{Moran}.
The mutation probability per locus is $q$.
Only the sharp peak landscape is considered:
the master sequence, which we denote by $w^*$,
replicates with rate $\s>1$,
while all the other sequences replicate with rate $1$.
In the asymptotic regime where
%$$\ell\to +\infty\,,\qquad m\to +\infty\,,\qquad q\to 0\,,$$
%$${\ell q} \to a\in \,]0,+\infty[\,,
%\qquad\frac{m}{\ell}\to\alpha\in [0,+\infty]\,.$$
$$\displaylines{
\ell\to +\infty\,,\qquad m\to +\infty\,,\qquad q\to 0\,,\cr
{\ell q} \to a\,,
\qquad\frac{m}{\ell}\to\alpha\,,}$$
a critical curve is obtained in the parameter space
$(a,\a)$, which is given by ${\a\phi(a)=\ln\k}$.
If $\a\phi(a)<\ln\k$, then the population is totally random, i.e.,
the fraction of the master sequence in a population at equilibrium
converges to 0.
On the contrary, if $\a\phi(a)>\ln\k$, then a quasispecies is formed, i.e.,
at equilibrium, the population contains a positive fraction of the master sequence,
which in the asymptotic regime presented above converges to $(\s\exa-1)/(\s-1)$.

The aim of our article is to obtain the whole distribution of the quasispecies.
As it is customary with this kind of models,
we introduce Hamming classes with respect to the master sequence
in the space $\cA^\ell$ of sequences of length~$\ell$.
We say that a chromosome $u\in\cA^\ell$ 
belongs to the class $d\in\zl$ 
if it differs from the master sequence in exactly $d$ characters, i.e.,
$$\card\big\lbrace\,
i\in\zl:
w^*(i)\neq u(i)
\,\big\rbrace
\,=\,d\,.$$
We study then the concentration of each of these classes in a population at equilibrium.
For $k\geq 0$ fixed,
in the above asymptotic regime,
we recover the critical curve $\a\phi(a)=\ln\k$.
If $\a\phi(a)<\ln\k$, then
the fraction of the class $k$
converges to 0,
whereas if $\a\phi(a)>\ln\k$, then 
the fraction of the class $k$ in a population at equilibrium
converges to
$$\rho^*_k\,=\,(\s\exa-1)\frac{a^k}{k!}\sum_{i\geq 1}\frac{i^k}{\s^i}\,.$$
We denote by $\cQ(\s,a)$ the probability distribution 
which assigns mass $\rho^*_k$ to $k$, for $k\geq 0$,
%$$\forall k\geq 0\qquad
%P\big( \cQ(\s,a)=k \big)\,=\,
%(\s\exa-1)\frac{a^k}{k!}\sum_{i\geq 1}\frac{i^k}{\s^i}\,,$$
and we call it the distribution of the quasispecies with parameters $\s,a$.

The article is organised as follows.
First, we present our main result,
along with a sketch of the proof,
a brief discussion about the distribution of the quasispecies
and some background material from \cite{Cerf}.
The remaining sections are devoted to the proof.
%The proof relies mainly in elementary techniques for Markov chains.
%We use coupling and monotonicity arguments to derive simpler Markov chains
%from the Moran model.
%We deal then with these simpler processes by obtaining large deviation estimates
%for several stopping times.

\begin{figure}
\centering
\includegraphics[trim=0.6cm 0cm 0cm 0cm, clip=true, scale=1]{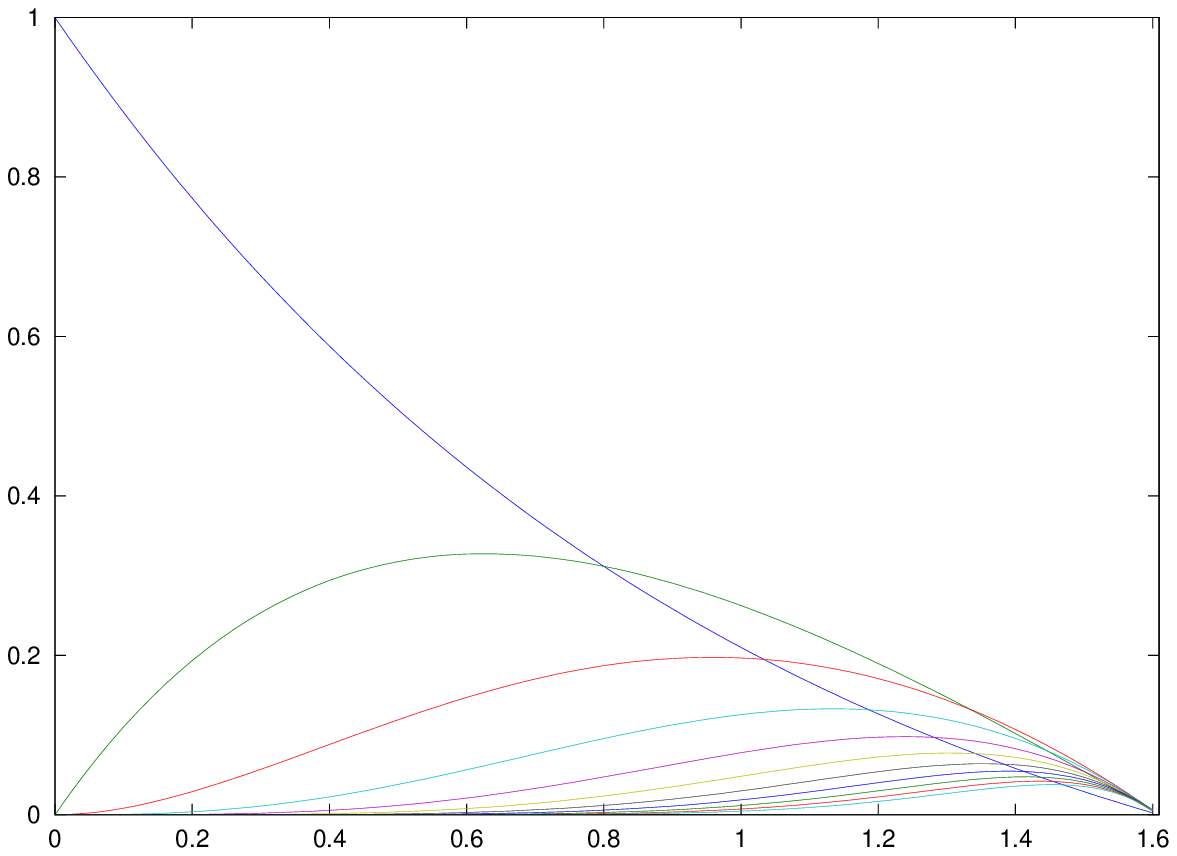}
%%\caption{
%
Frequency of the master sequence
and the first 10 classes
for $\s=5$

%%}
\end{figure}

\begin{figure}
 \centering
 \includegraphics[trim=0.6cm 0cm 0cm 0cm, clip=true, scale=1]{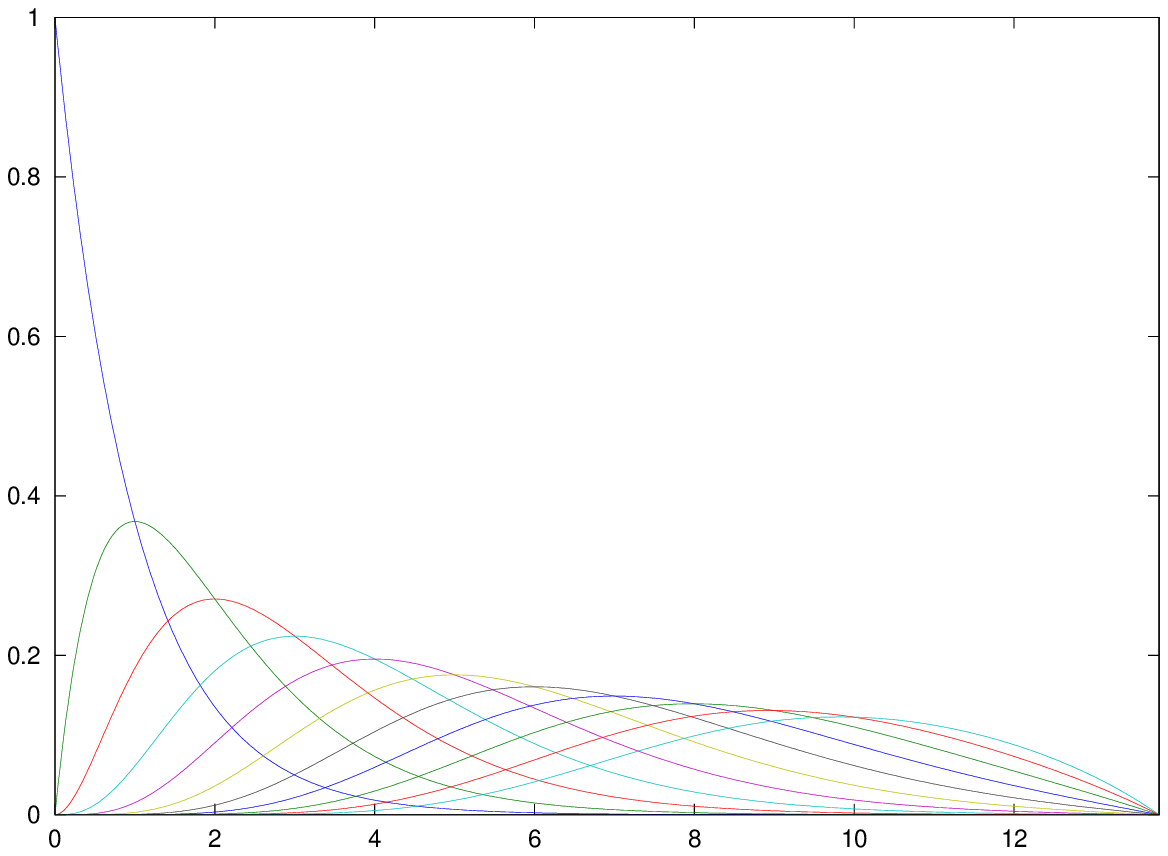}
%%\caption{

Frequency of the master sequence
and the first 10 classes
for $\s=10^6$
%%}
\end{figure}

\section{Main Result}\label{Mainres}
Let $\cA$ be a finite alphabet of cardinality $\k$
and $\ell\geq 1$ an integer.
We consider the space $\Al$ 
of sequences of length $\ell$ 
over the alphabet $\cA$.
Elements of the space $\Al$
represent the chromosome of an haploid individual.
We consider a population of size $m$
of individuals from $\Al$.
The size of the population $m$
is kept constant throughout the evolution.

When a reproduction occurs,
the chromosome is subject to mutations.
We suppose that mutations occur independently at random at each locus,
with probability $q\in\,]0,1-1/\k[\,$.
If a mutation occurs,
we replace the letter with a new one,
chosen uniformly at random between the remaining $\k-1$ letters of the alphabet $\cA$.
The mutation mechanism is encoded in a mutation matrix $\big( M(u,v),\ u,v\in\Al\big)$,
where $M(u,v)$ is the probability that the chromosome $u$ is transformed into $v$ by mutation.
We have the following analytical expression for $M(u,v)$:
$$
M(u,v)\,=\,
\prod_{j=1}^\ell
\Big(
(1-q)1_{u(j)=v(j)}
+
\frac{q}{\k-1}
1_{u(j)\neq v(j)}
\Big)\,.
$$
The only allowed transformations in a population consist of replacing a chromosome of the population with a new one.
For a population $x\in\Alm$,
$j\in\um$, 
$u\in\Al$,
we denote by $x(j\la u)$
the populaton $x$ where the $j$--th chromosome $x(j)$ has been replaced by $u$:
$$
x(j\leftarrow u)
\,=\,
\left(
\begin{matrix}
x(1)\\
\vdots\\
x(j-1)\\
u\\
x(j+1)\\
\vdots\\
x(m)
\end{matrix}
\right)\,.
$$
The replication mechanism is encoded in a fitness function:
$$A:\Al\lra[0,+\infty[\,\,.$$
%We will work with the normalised Moran model introduced in \cite{Cerf}:
The discrete time Moran model is the 
Markov chain $\xt$,
whose transition matrix is given by
\begin{multline*}
\forall t\in \N\quad
\forall x\in\Alm\quad
\forall j\in\ul\quad
\forall u\in\Al\setminus\{x(j)\}\\
P\big(X_{t+1}=x(j\la u)
\,|\,
X_t=x \big)\,=\,
\frac{1}{m^2\l}
\sum_{1\leq i\leq m}
A(x(i))M(x(i),u)\,,
\end{multline*}
where $\l>0$ is a constant such that
$$\l\geq \max\,
\{\, A(u): u\in\Al \,\}\,.$$
The other non--diagonal coefficients of the matrix are null.
The diagonal coefficients are arranged so that the matrix is stochastic,
i.e.,
the sum over each row equals $1$.
We will only consider the sharp peak landscape.
Let $\s>1$ be a real number.
There exists a particular sequence,
called the master sequence or the wild type,
and denoted by $w^*$,
for which the replication rate is $\s$.
The replication rate for all other sequences is $1$.
The fitness function is then given by
$$
\forall u\in\Al\qquad
A(u)\,=\,
\begin{cases}
\quad 1 &\quad \text{if}\ u\neq w^*\,,\\
\quad \s &\quad \text{if}\ u=w^*\,.
\end{cases}
$$
We denote by $d_H$ the Hamming distance between two chromosomes:
$$
\forall u,v\in\Al\qquad
d_H(u,v)\,=\,
\text{card}\big\lbrace\,
i:1\leq i\leq m,
u(i)\neq v(i)
\,\big\rbrace\,.
$$
Let $x$ be a population in $\Alm$.
We fix an integer $K\geq 0$ 
and we look at the number $N^K(x)$ 
of chromosomes in $x$ 
which are at distance $K$ or less from the master sequence:
$$
N^K(x)\,=\,
\text{card}
\big\lbrace\,
i:1\leq i\leq m,
d_H(x(i),w^*)\leq K
\,\big\rbrace\,.
$$
Let $\phi:\R^+\lra\R^+\cup\{+\infty\}$
be the map given by
$$
\forall a<\ln\s\qquad
\phi(a)\,=\,
\frac
{ \displaystyle \sigma(1-e^{-a})
\ln\frac{\displaystyle\sigma(1-e^{-a})}{\displaystyle\sigma-1}
+\ln(\sigma e^{-a})}
{
\displaystyle (1-\sigma(1-e^{-a}))
}\,,
$$
and $\phi(a)=0$ for $a\geq\ln\s$.
Let $(\rho^*_k)_{k\geq 0}$
be the sequence given by
$$
\forall k\geq 0\qquad
\rho^*_k\,=\,
(\s\exa-1)
\frac{a^k}{k!}
\sum_{i\geq 1}
\frac{i^k}{\s^i}\,.
$$
We have then the following result:
\begin{theorem}\label{main}
Suppose that
$$\displaylines{
\ell\to +\infty\,,\qquad m\to +\infty\,,\qquad q\to 0\,,\cr
{\ell q} \to a\in \,]0,+\infty[\,,
\qquad\frac{m}{\ell}\to\alpha\in [0,+\infty]\,.}$$
We have the following dichotomy:
\medskip

\noindent
$\bullet\quad$ 
If $\alpha\,\phi(a)<\ln\kappa$, then
$$\forall K\geq 0\qquad
\lim_{
\genfrac{}{}{0pt}{1}{\ell,m\to\infty,\,
q\to 0
}
{{\ell q} \to a,\,
\frac{\scriptstyle m}{\scriptstyle \ell}\to\alpha
}
}
\,\lim_{t\ra\infty}\,
E\Big(
\frac{1}{m}N^K(X_t)
\Big)\,=\,0\,.
$$
\noindent
$\bullet\quad$ 
If $\alpha\,\phi(a)>\ln\kappa$, then
$$\forall K\geq 0\qquad
\lim_{
\genfrac{}{}{0pt}{1}{\ell,m\to\infty,\,
q\to 0
}
{{\ell q} \to a,\,
\frac{\scriptstyle m}{\scriptstyle \ell}\to\alpha
}
}
\,\lim_{t\ra\infty}\,
E\Big(
\frac{1}{m}N^K(X_t)
\Big)\,=\,\rho^*_0+\cdots+\rho^*_K\,.
$$
Furthermore, in both cases
$$\forall K\geq 0\qquad
\lim_{
\genfrac{}{}{0pt}{1}{\ell,m\to\infty,\,
q\to 0
}
{{\ell q} \to a,\,
\frac{\scriptstyle m}{\scriptstyle \ell}\to\alpha
}
}
\,\lim_{t\ra\infty}\,
\mathrm{Var}\Big(
\frac{1}{m}N^K(X_t)
\Big)\,=\,0\,.
$$
\end{theorem}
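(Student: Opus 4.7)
The plan is to extend the coupling strategy developed in \cite{Cerf} for the single class $K=0$ to an arbitrary fixed $K\geq 0$. The starting point is the lumping property specific to the sharp peak landscape: all non-master chromosomes share the same fitness, and the mutation kernel $M(u,v)$ depends only on $d_H(u,v)$ together with the Hamming class of $u$, so the vector of class occupancies of $X_t$ is itself a Markov chain. For a fixed $K$ I would project further onto the $(K+2)$-component chain formed by the counts of classes $0,1,\dots,K$ together with the total count of chromosomes at Hamming distance $>K$, and prove the convergence of its stationary measure in the asymptotic regime of the theorem.

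The core of the argument is to sandwich this lumped chain between two tractable Markov chains $(Z^L_t)$ and $(Z^U_t)$. For the lower bound, I would suppress all mutations from classes $>K$ back into classes $\leq K$ (each such event having probability $O(q^{\ell-K})$) and use the worst-case forward rate; for the upper bound I would use the best-case rate, formally treating every class-$\leq K$ chromosome as if it reproduced with the master rate~$\sigma$. Both $(Z^L_t)$ and $(Z^U_t)$ are low-dimensional birth-and-death processes whose stationary distributions can be analysed by the branching-process comparison technique of \cite{Cerf}. Below threshold, $\alpha\phi(a)<\ln\kappa$, both bounding chains decay to $0$ and so does $N^K(X_t)/m$, because any class-$\leq K$ chromosome is either a descendant of the (non-persistent) master lineage or the result of an exponentially improbable direct jump from a distant class. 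Above threshold, the two bounds concentrate on a common deterministic limit.

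That limit can be identified by writing the mean-field balance equations satisfied by the stationary distribution. In the regime $\ell q\to a$ with $\ell\to\infty$, the probability that a chromosome in class $j$ produces an offspring in class $j+h$ is asymptotic to $e^{-a}a^h/h!$, while back-mutations are negligible. Denoting by $p_k$ the asymptotic stationary fraction in class~$k$, and using the fact established in \cite{Cerf} that the mean fitness equals $\sigma e^{-a}$, the balance reads
$$
\sigma\,p_k\,=\,\sigma\,p_0\,\frac{a^k}{k!}\,+\,\sum_{j=1}^{k}p_j\,\frac{a^{k-j}}{(k-j)!}\,,\qquad k\geq 0\,.
$$
The generating function $G(z)=\sum_{k\geq 0}p_kz^k$ then satisfies $G(z)(\sigma-e^{az})=p_0(\sigma-1)e^{az}$, which together with $p_0=(\sigma e^{-a}-1)/(\sigma-1)$ from \cite{Cerf} gives
$$
G(z)\,=\,(\sigma e^{-a}-1)\,\frac{e^{az}}{\sigma-e^{az}}\,=\,(\sigma e^{-a}-1)\sum_{k\geq 0}\frac{a^k z^k}{k!}\sum_{i\geq 1}\frac{i^k}{\sigma^i}\,,
$$
and thus $p_k=\rho^*_k$ as required. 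The vanishing of the variance follows from the same sandwiching, since both $(Z^L_t)$ and $(Z^U_t)$ obey a law of large numbers at equilibrium in the scaling $m/\ell\to\alpha$, and the squeeze forces the fluctuations of $N^K(X_t)/m$ to vanish in the same order of limits.

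The main obstacle I foresee is controlling the cumulated effect of back-mutations from class $>K$ into class $\leq K$ over the equilibrium timescale: although each such event has probability $O(q^{\ell-K})$, there are $m\sim\alpha\ell$ chromosomes, and the chain has to mix before the stationary statistics emerge, so a naive union bound is too weak. The delicate step is to show that the stationary joint law of $\big(N^0(X_t)/m,\dots,N^K(X_t)/m\big)$ concentrates on the deterministic vector whose $k$-th component equals $\rho^*_k$ at every finite level simultaneously, and that the sandwiching error vanishes in the correct order of limits; this is what forces the coupling construction to be carried out in full $(K+2)$-dimensional detail rather than class by class.
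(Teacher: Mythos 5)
Your overall architecture (lumping onto class counts, sandwiching between a lower and an upper chain, identifying the limit through the balance equations and the generating function) coincides with the paper's, and your recurrence for $p_k$ and its resolution are exactly the paper's computation of $\rho^*_k$. However, two of your steps, as described, would fail. First, your upper chain is not tight: if every chromosome in a class $\leq K$ is treated as replicating at the master rate $\sigma$, the stationary fraction of the classes $\leq K$ for that chain converges to the fixed point of a strictly more favourable dynamics, which exceeds $\rho^*_0+\cdots+\rho^*_K$, so the two bounds do not meet and the supercritical limit is not identified. The paper's upper process $\Otk$ leaves the sharp--peak fitness untouched and modifies only the mutation kernel (all classes $>K$ are collapsed onto the class $K+1$, and each backward probability $M_H(b,c)$ with $c<b$ is replaced by $M_H(c+1,c)$); these modifications vanish in the asymptotic regime, which is what makes the sandwich close.

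Second, and more fundamentally, for $K\geq 1$ the bounding chains are $(K+1)$--dimensional, not birth--and--death, and you supply no mechanism for proving that their stationary measures concentrate on $(\rho^*_0,\dots,\rho^*_K)$; the mean--field balance only identifies the candidate limit. You in fact dismiss the one workable route when you assert that the construction must be carried out ``in full $(K+2)$--dimensional detail rather than class by class'': the paper's proof is precisely class by class. Because back--mutations vanish in the limit, the bounding processes are built so that the dynamics of the coordinates $0,\dots,k$ is autonomous, i.e.\ does not depend on the coordinates $k+1,\dots,K$. One then inducts on $k$: the induction hypothesis guarantees that the lower coordinates remain in a small neighbourhood of $(\rho^*_0,\dots,\rho^*_{k-1})$ for a time exponential in $m$, and on that event the $k$--th coordinate is coupled between two genuine one--dimensional birth--and--death chains with time--homogeneous rates, to which the explicit hitting--time formulas and the large--deviation estimates for $\pi(\lfloor\rho m\rfloor)$ apply. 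This yields the dichotomy ``hitting time of $U_k(\d)$ polynomial in $m$ versus exit time from $U_k(2\d)$ exponential in $m$'' that drives the renewal argument for the invariant measure. Without this reduction (or a substitute multidimensional analysis, which you do not provide), the concentration claim — and with it the supercritical limit and the vanishing of the variance — remains unproved.
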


\subsection{Sketch of proof}
The state space of the Markov chain
$\xt$
has cardinality 
$\k^{\ell m}$,
which is too big to work with.
The first step in proving theorem \ref{main}
is to reduce the state space.
We use a classical technique called lumping
and we build a simpler process $\Ot$,
called the occupancy process, 
whose state space is much smaller.
The occupancy process 
$\Ot$
keeps track of the number of chromosomes in each of the 
$\ell+1$ Hamming classes.
This process will be the main subject of our study.
In addition,
the state space of the process
$\Ot$
is naturally endowed with a partial order 
which allows us to use coupling and monotonicity arguments.

We then compare the time that the process 
$\Ot$
spends 
having at least a sequence in one of the Hamming classes
$0,\dots,K$ (which we call the persistence time),
with the time the process 
$\Ot$ 
spends having no sequences in any of the Hamming classes
$0,\dots,K$ (which we call the discovery time).
Asymptotically, 
when ${\a\phi(a)>\ln\k}$,
the persistence time becomes negligible with respect to the discovery time,
whereas when ${\a\phi(a)<\ln\k}$,
it is
the discovery time that becomes negligible with respect to the persistence time.
This already proves the first assertion in theorem \ref{main}.

The second statement of the theorem requires much more work.
We build first a coupling 
to compare the occupancy process
$\Ot$
with simpler processes:
a lower process $\Otl$,
and an upper process $\Otk$.
These processes stochastically bound the occupancy process 
$\Ot$, 
and they only keep track of the number of chromosomes
in the Hamming classes $0,\dots,K$.
The goal is to show that the 
invariant probability measures of the processes
$\Otl$ and $\Otk$
both converge to the Dirac mass at the point $(\rho^*_0,\dots,\rho^*_K)$.
This is achieved by estimating the typical time that the processes
spend inside and outside a neighbourhood of $(\rho^*_0,\dots,\rho^*_K)$.
The time they spend inside a neighbourhood of $(\rho^*_0,\dots,\rho^*_K)$
is typically of exponential order in $m$,
whereas the time they spend outside such a neighbourhood 
is typically of polynomial order in $m$.
These estimates are enough to complete the proof of theorem \ref{main}.
The strategy to obtain the estimates is as follows.
The mutation probabilities $M(u,v)$ for $u>v$
go to $0$ when $\ell\ra\infty$, $q\ra0$ and $\ell q\ra a$.
Thanks to this fact we can build the processes
$\Otl$ and $\Otk$
in such a way that, for $0\leq k\leq K$,
the evolution of the Hamming classes $0,\dots,k$
does not depend on the remaining Hamming classes.
We can then proceed to prove the estimates by induction.
Both the initial case and the inductive step 
boil down to the study of birth and death Markov chains,
for which explicit formulas are available.

\subsection{The distribution of the quasispecies}
Let $a$ be such that $\s\exa>1$.
As stated in theorem \ref{main},
the distribution of the quasispecies of parameters $\s,a$,
is given by the sequence 
$(\rho^*_k)_{k\geq 0}$:
$$
\forall k\geq 0\qquad
\rho^*_k\,=\,
(\s\exa-1)
\frac{a^k}{k!}
\sum_{i\geq 1}
\frac{i^k}{\s^i}\,.
$$
Nevertheless,
we will carry out the proof by induction,
and in our proof we will not find the sequence 
$(\rho^*_k)_{k\geq 0}$
in the above form,
it will be given by the following recurrence relation:
\begin{align*}
\rho^*_0\,&=\,
\frac{\s\exa-1}{\s-1}\,,\\
\rho^*_k\,&=\,
\frac{\exa}{\displaystyle (\s-1)\rho^*_0+1-\exa}
\Big(
\s\frac{a^k}{k!}\rho^*_0
+\sum_{l=1}^{k-1}\frac{a^{k-l}}{(k-l)!}\rho^*_l
\Big)\,,\qquad k\geq 1\,.
\end{align*}
We show next how to derive the explicit formula for 
$(\rho^*_k)_{k\geq 0}$
from the recurrence relation.
Firstly,
we remark that replacing $\rho^*_0$ by its value,
$$\frac{\exa}{(\s-1)\rho^*_0+1-\exa}\,=\,
\frac{1}{\s-1}\,.$$
Therefore,
we can rewrite the recurrence relation as follows:
\begin{align*}
\rho^*_0\,&=\,
\frac{\s e^{-a}-1}{\s-1}\\
\rho^*_k\,&=\,
\frac{a^k}{k!}\rho^*_0+
\frac{1}{\s}\sum_{l=1}^k\frac{a^{k-l}}{(k-l)!}\rho^*_l
\,,\qquad 
k\geq 1\,.
\end{align*}
Let $f$ be the generating function of the sequence $(\rho^*_k)_{k\geq 0}$:
$$f(X)\,=\,
\sum_{k\geq 0}\rho^*_k X^k\,.
$$
Let us consider the following formal series:
$$
\frac{1}{\s}e^{aX}\,=\,
\sum_{k\geq 0}\frac{1}{\s}\frac{a^k}{k!}X^k\,.$$
We do the Cauchy product of the two formal series above and we obtain:
$$f(X)\Big(
\frac{1}{\s}e^{aX}
\Big)\,=\,
\sum_{k\geq 0}\biggr(
\frac{1}{\s}
\sum_{l=0}^k \frac{a^{k-l}}{(k-l)!}\rho^*_l
\biggr)X^k\,.$$
Yet, thanks to the recurrence relation,
$$
\frac{1}{\s}
\sum_{l=0}^k 
\frac{a^{k-l}}{(k-l)!}\rho^*_l\,=\,
\rho^*_k+\Big(
\frac{1}{\s}-1
\Big)\frac{a^k}{k!}\rho^*_0\,.
$$
Thus,
$$f(X)\Big(
\frac{1}{\s}e^{aX}
\Big)\,=\,
f(X)+\Big(
\frac{1}{\s}-1
\Big)\rho^*_0 e^{aX}\,.$$
Replacing $\rho^*_0$ with its value gives the following expression for $f$:
$$f(X)\,=\,
(\s e^{-a}-1)\frac{e^{aX}}{\s-e^{aX}}\,.$$
We remark that $f(1)=1$, thus
$(\rho^*_k)_{k\geq 0}$ is indeed a probability distribution on $\N$.
We develop this last expression as follows:
$$
\frac{e^{aX}}{\s-e^{aX}}\,=\,
\sum_{i\geq 1}\bigg(
\frac{e^{aX}}{\s}
\bigg)^i\,=\,
\sum_{i\geq 1}
\frac{1}{\s^i}
\sum_{k\geq 0}
\frac{(aiX)^k}{k!}\,=\,
\sum_{k\geq 0}\bigg(
\sum_{i\geq 1}
\frac{i^k}{\s^i}
\bigg)\frac{a^k}{k!}X^k\,.
$$
We obtain finally 
$$\forall k\geq 0\qquad
\rho^*_k\,=\,
(\s e^{-a}-1)
\frac{a^k}{k!}
\sum_{i\geq 1}
\frac{i^k}{\s^i}\,.
$$
We call this the probability distribution of the quasispecies
with parameters
$\s$, $a$
and we denote it by 
$\cQ(\s,a)$.
A short calculation shows that the expectation and the variance of 
$\cQ(\s,a)$ are given by:
$$
\text{E}(\cQ)\,=\,
\frac{\s a e^{-a}}{\s e^{-a}-1}\,,\qquad
\text{Var}(\cQ)\,=\,
\frac{\s a e^{-a}(\s e^{-a}+a-1)}{(\s e^{-a}-1)^2}\,.
$$
The graphs at the end of the introduction show the frequency of the master sequence 
and the first 10 Hamming classes for $\s=5$ and $\s=10^6$.
The graphs closely resemble those obtained by solving
the differential equations from Eigen's original model \cite{EMS,Schuster}.

\subsection{The occupancy process}
The occupancy process
$\Ot$
will be the main subject of our study,
it is obtained from the original process
$\xt$
via lumping, as in section 6.3 of \cite{Cerf}.
Let $\pml$ be the set of the ordered partitions 
of the integer $m$ in at most $\ell+1$ parts:
$$
\pml\,=\,
\big\lbrace\,
(o(0),\dots,o(\ell))\in\N^{\ell+1}:
o(0)+\cdots+o(\ell)=m
\,\big\rbrace\,.
$$
A partition $(o(0),\dots,o(\ell))$
is interpreted as an occupancy distribution,
which corresponds to a population with $o(l)$ 
individuals in the Hamming class $l$, for $0\leq l\leq \ell$.
Since we are working with a Moran model,
only a chromosome can change classes at a time,
i.e.,
the only possible transitions for the occupancy process
$\Ot$ are of the form
$$
o\ \lra\ o(k\ra l)\,,\qquad 
0\leq k,l\leq \ell\,,
$$
where $\displaystyle o(k\ra l)$
is the occupancy distribution obtained by transferring a chromosome
from the Hamming class $k$ to the class $l$, i.e.,
$$
\forall h\in\zl\qquad
o(k\ra l)(h)\,=\,
\begin{cases}
\quad o(h)\quad &\quad \text{if } h\neq k,l\,,\\
\quad o(k)-1\quad &\quad \text{if } h=k\,,\\
\quad o(l)+1\quad &\quad \text{if } h=l\,.
\end{cases}
$$
We will work with a discrete time occupancy process $\Ot$,\
whose transition matrix is given by
\begin{multline*}
\forall o\in\pml\quad\forall k,l\in\zl\,,
\quad k\neq l\,,\cr
p_{O}\big(o,o(k\rightarrow l)\big)\,=\,
\frac{\displaystyle
o(k)
\sum_{h=0}^\ell
o(h)\,
{A_H(h)}\, M_H(h,l)}
{\displaystyle
m\sum_{h=0}^\ell
o(h)\,
{A_H(h)}}\,,
\end{multline*}
where $A_H$ is the lumped fitness function, 
defined as follows
$$\forall b \in \zl\qquad
A_H(b)\,=\,
\begin{cases}
\quad \s\quad & \text{if } b=0\,,\\
\quad 1\quad & \text{if } b\geq 1\,,
\end{cases}
$$
and $M_H$ is the lumped mutation matrix:
for $b,c\in\zl$ the coefficient $M_H(b,c)$ is given by
$$
\sum_{
\genfrac{}{}{0pt}{1}{0\leq k\leq\ell-b}{
\genfrac{}{}{0pt}{1}
 {0\leq l\leq b}{k-l=c-b}
}
}
{ \binom{\ell-b}{k}}
{\binom{b}{l}}
\Big(p\Big(1-\frac{1}{\kappa}\Big)\Big)^k
\Big(1-p\Big(1-\frac{1}{\kappa}\Big)\Big)^{\ell-b-k}
\Big(\frac{p}{\kappa}\Big)^l
\Big(1-\frac{p}{\kappa}\Big)^{b-l}\,.
$$

\section{Stochastic bounds}\label{Sbounds}
In this section we will build a lower process $\Otl$
and an upper process $\Otk$ in order to bound stochastically 
the occupancy process $\Ot$.
The space $\pml$ of the occupancy distributions
is endowed with a natural order $\preceq$.
If $o,o'$ are two occupancy distributions 
we write that $o\preceq o'$ if and only if
$$\forall l\in\zl\qquad 
o(0)+\cdots+o(l)\,\leq\,
o'(0)+\cdots+o'(l)\,.$$
We will construct the lower process $\Otl$
and the upper process $\Otk$
in such a way that for any $o\in \pml$,
if $O^\ell_0=O_0=O^{K+1}_0=o$, then
$$\forall t\geq 0\qquad
O^\ell_t\,\preceq\,
O_t\,\preceq\,
O^{K+1}_t\,.$$
The processes 
$\Otl$ and $\Otk$
will be much simpler than the occupancy process $\Ot$.

\subsection{The lower process}\label{Lower}
We start by building the lower process $\Otl$.
First of all, let us explain loosely the dynamics of the lower process $\Otl$.
As long as there is no master sequence present in the population,
the lower process $\Otl$ evolves exactly as the original process $\Ot$.
When a master sequence appears,
all the chromosomes in the Hamming classes $K+1,\dots,\ell$
are sent to the class $\ell$.
As long as the master sequence is present in the population,
a mutation to any of the classes 
$K+1,\dots,\ell$
is directly sent to the class $\ell$.
Furthermore, 
every mutation from a Hamming class to a lower class is also sent to the class $\ell$.
To make this construction rigorous,
we will modify the coupling map $\Phi_O$
defined in section 7.1 of \cite{Cerf}.
To do so, we will also use the maps
$\cM_H$ and $\cS_O$ defined in the section 7.1 of \cite{Cerf}.
We take $\cR$ to be the set
$$\cR\,=\,
[0,1]\times\zm^2\times [0,1]^\ell\,,
$$
and we define a map
$$\underline{\Phi}_O:
\pml\times\cR\lra\pml\,,$$
as follows.
Let $r=(s,i,j,u_1,\dots,u_\ell)\in\cR$ 
and $o\in\pml$.
We take $l=\cS_O(o,s)$ 
and $k$ the only index in $\{\,0,\dots,\ell\,\}$ such that
$$o(0)+\cdots+o(k-1)\,<\,
j\,\leq\,
o(0)+\cdots+o(k).$$
We define the map 
$\underline{\Phi}_O$
by:
\begin{align*}
\underline{\Phi}_O(o,r)\,&=\,
\begin{cases}
\quad o(k\ra \ell) & \text{ if\ } \cM_H(l,u_1,\dots,u_\ell)<l\,,\\
\quad o\big(k\ra \cM_H(l,u_1,\dots,u_\ell) \big) & \text{ otherwise}.
\end{cases}
\end{align*}
From this construction we see that
$$\forall r\in\cR\quad 
\forall o\in\pml\qquad
\underline{\Phi}_O(o,r)\preceq\Phi_O(o,r)\,.
$$
We define a map
$\pi_\ell:\pml\to\pml$
by setting, for
$o\in\pml$ and $l\in\zl$,
\begin{align*}
\pi_\ell(o)(l)\,&=\,
\begin{cases}
\quad o(l) & \quad\text{if } 0\leq l\leq K\,, \\
\quad 0 & \quad\text{if } K<l<\ell\,, \\
\quad m-(o(0)+\cdots +o(K)) & \quad\text{if }\ l=\ell\,.
\end{cases}
\end{align*}
This map satisfies
$$\forall o\in\pml\qquad
\pi_\ell(o)
\, \preceq\, 
o\,.$$
We denote by $\cW$ the set of the occupancy distributions
having at least one master sequence, i.e.,
$$\cW\,=\,\big\{\,o\in\pml:o(0)\geq 1\,\big\}\,,$$
and we denote by $\cN$ the set of the occupancy distributions
having no master sequence, i.e.,
$$\cN\,=\,\big\{\,o\in\pml:o(0)=0\,\big\}\,.$$
Let us define
\begin{align*}
o^\ell_{\text{exit}}\,&=\,
(0,\dots,0,m)\,,
\quad 
&&o^\ell_{\text{enter}}\,=\,
(1,0,\dots,0,m-1)\,.
\end{align*}
We define a lower map $\Phi_O^{\ell}$ by setting for $o\in\pml$ and $r\in\cR$,
\begin{equation*}
\index{$\Phi_O^{\ell}$}
\Phi_O^{\ell}(o,r)\,=\,
\begin{cases}
\quad 
\Phi_{O}(o,r)
& \quad\text{if }o\in\cN \,\,{\text{ and }}\,\, 
\Phi_O(o,r)\not\in\cW\,, \\
\quad 
o^\ell_{\text{enter}}
& \quad\text{if }o\in\cN \,\,{\text{ and }}\,\, 
\Phi_O(o,r)\in\cW\,, \\
\quad 
\pi_\ell\big(\underline{\Phi}_{O}(\pi_\ell(o),r)\big)
& \quad\text{if }o\in\cW\ \text{ and }\ 
\underline{\Phi}_{O}(\pi_\ell(o),r)\not\in\cN\,,\\
\quad 
o^\ell_{\text{exit}}
&\quad\text{if }o\in\cW\ \text{ and }\ 
\underline{\Phi}_{O}(\pi_\ell(o),r)\in\cN\,.\\
\end{cases}
\end{equation*}
The next proposition compares the map
$\Phi_O^{\ell}$ to the map
$\Phi_{O}$.
\begin{proposition}\label{compphiol} 
For all
$r\in\cR$
and for all
$o\in\pml$,
$$\Phi_O^{\ell}(o,r)
\,\preceq\,
\Phi_{O}(o,r)\,.$$
\end{proposition}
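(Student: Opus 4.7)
The plan is to dispatch the four cases in the definition of $\Phi_O^{\ell}$ in turn; three of them are immediate, and the remaining one reduces to a short chain of inequalities. The ingredients I will use are (i) the inequality $\underline{\Phi}_O(o',r)\preceq \Phi_O(o',r)$ which is recorded in the construction of $\underline{\Phi}_O$, (ii) the inequality $\pi_\ell(o')\preceq o'$ noted just after the definition of $\pi_\ell$, and (iii) the monotonicity of $\Phi_O$ in its first argument with respect to $\preceq$, which is the coupling lemma proved in \cite{Cerf}.

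When $o\in\cN$ and $\Phi_O(o,r)\not\in\cW$, the two maps coincide by definition, so the inequality is trivial. When $o\in\cW$ and $\underline{\Phi}_O(\pi_\ell(o),r)\in\cN$, the distribution $o^\ell_{\text{exit}}=(0,\dots,0,m)$ has the componentwise smallest sequence of partial sums among elements of $\pml$ (all equal to $0$ except the last, which is forced to be $m$), so it lies below every element in the order $\preceq$; this settles the fourth case. When $o\in\cN$ and $\Phi_O(o,r)\in\cW$, the first coordinate of $\Phi_O(o,r)$ is at least $1$, hence every partial sum of $\Phi_O(o,r)$ is at least $1$, while the last equals $m$; thus the partial sums $1,1,\dots,1,m$ of $o^\ell_{\text{enter}}=(1,0,\dots,0,m-1)$ are dominated by those of $\Phi_O(o,r)$, settling the second case.

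The substantive case is the third, in which $o\in\cW$, $\underline{\Phi}_O(\pi_\ell(o),r)\not\in\cN$, and $\Phi_O^{\ell}(o,r)=\pi_\ell\bigl(\underline{\Phi}_O(\pi_\ell(o),r)\bigr)$. Here I will string the three ingredients into the chain
$$
\pi_\ell\bigl(\underline{\Phi}_O(\pi_\ell(o),r)\bigr)
\,\preceq\,\underline{\Phi}_O(\pi_\ell(o),r)
\,\preceq\,\Phi_O(\pi_\ell(o),r)
\,\preceq\,\Phi_O(o,r),
$$
where the first step uses (ii) applied to $\underline{\Phi}_O(\pi_\ell(o),r)$, the second step uses (i), and the third step uses (iii) together with $\pi_\ell(o)\preceq o$. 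Concatenating the three inequalities gives the claim.

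The only step that is not essentially tautological from the definitions recalled just above the proposition is the monotonicity of $\Phi_O$, and this is the place where one has to be careful: it is what justifies the whole coupling construction, and my proof simply invokes the corresponding lemma from \cite{Cerf} with the same value of $r$ on both sides so that the same source class $\cS_O(\cdot,s)$, the same chromosome index $j$, and the same mutation letters $u_1,\dots,u_\ell$ are used throughout the chain. Beyond that bookkeeping, no real obstacle is expected.
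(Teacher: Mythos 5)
Your proof is correct and follows essentially the same route as the paper's: the same four-case split, the same observations that $o^\ell_{\text{exit}}$ is the minimum of $(\pml,\preceq)$ and that $o^\ell_{\text{enter}}$ lies below any distribution with a master sequence, and the same three-link chain $\pi_\ell(\underline{\Phi}_O(\pi_\ell(o),r))\preceq\underline{\Phi}_O(\pi_\ell(o),r)\preceq\Phi_O(\pi_\ell(o),r)\preceq\Phi_O(o,r)$ using $\pi_\ell(o')\preceq o'$, the domination $\underline{\Phi}_O\preceq\Phi_O$, and the monotonicity of $\Phi_O$ from \cite{Cerf}. Nothing further is needed.
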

\begin{proof}
Let us take $r\in\cR$
and
$o\in\pml$. We consider the four following cases:

$\bullet$ If $o\in\cN$ and $\Phi_O(o,r)\not\in\cW$, then
$$\Phi^\ell_O(o,r)\,=\,
\Phi_O(o,r)\,.$$
$\bullet$ If $o\in\cN$ and $\Phi_O(o,r)\in\cW$,
we have
$$\Phi^\ell_O(o,r)\,=\,o^\ell_{\text{enter}}
\qquad \text{and}\qquad 
\Phi_O(o,r)(0)\,=\,1\,,$$
the inequality holds since for all
$o\in\pml$ with $o(0)=1$, we have
$o^\ell_{\text{enter}}\,\preceq\,o\,.$

$\bullet$ If $o\in\cW$ and $\underline{\Phi}_O(\pi_\ell(o),r)\not\in\cN$,
since the mapping
$\underline{\Phi}_O$
is lower than 
$\Phi_O$, we have
$$
\underline{\Phi}_O(\pi_\ell(o),r)
\,\preceq\,
\Phi_O(\pi_\ell(o),r)\,.
$$
Also
$\pi_\ell(o)\preceq o$,
and $\phi_O$ is monotone,
so that
$$\pi_\ell\big( \underline{\Phi}_O(\pi_\ell(o),r) \big)\,\preceq\,
\underline{\Phi}_O(\pi_\ell(o),r)\,\preceq\,
\Phi_O(\pi_\ell(o),r)\,\preceq\,
\Phi_O(o,r)\,.$$
$\bullet$ If $o\in\cW$ and $\underline{\Phi}_O(\pi_\ell(o),r)\in\cN$,
we have
$\Phi^\ell_O(o)=o^\ell_{\text{exit}}$,
we remark then that for all  $o\in\pml$,
$o^\ell_{\text{exit}}\,\preceq\, o\,.$

We finally conclude that
$\Phi^\ell_O(o,r)\,\preceq\,
\Phi_O(o,r)\,,$
for all $o\in\pml$ and for all $r\in\cR$.
\end{proof}
We define next the lower process
$(O^\ell_t)_{t\geq 0}$.
Let
$$R_n=
(I_n,J_n,S_n,U_{n,1},\dots,U_{n,\ell})\,,\quad
n\geq 1\,,$$
be an i.i.d. sequence of random vectors with values in $\cR$, 
as defined in \cite{Cerf}. 
The components of $R_n$ 
are independent random variables
with uniform distribution
on their corresponding spaces.
Let $o\in\pml$
be the starting point of the process.
We set
$O^\ell_0=o$ and
$$\forall n\geq 1\qquad
O^\ell_n\,=\,\Phi_O^{\ell}\big(O^\ell_{n-1}, R_n\big)\,.$$
\begin{proposition}\label{compol}
We suppose that the processes
$(O^\ell_t)_{t\geq 0}$ and
$(O_t)_{t\geq 0}$
have the same starting occupancy distribution $o$.
We have then
$$\forall t\geq 0\qquad
O^\ell_t
\,\preceq\,
O_t\,.$$
\end{proposition}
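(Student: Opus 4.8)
The plan is to run a straightforward induction on $t$, exploiting the fact that both $\Ot$ and $\Otl$ are driven by the \emph{same} i.i.d. sequence $(R_n)_{n\geq 1}$, together with the pointwise comparison of coupling maps already established in Proposition~\ref{compphiol}. First I would recall from section~7.1 of \cite{Cerf} the two ingredients borrowed from there: that the occupancy process itself admits the representation $O_n=\Phi_O(O_{n-1},R_n)$ for $n\geq 1$, driven by the very same sequence $(R_n)_{n\geq 1}$ used above to define $\Otl$; and that for each fixed $r\in\cR$ the map $\Phi_O(\cdot,r)$ is monotone with respect to the partial order $\preceq$ — this is precisely the monotonicity that was invoked in the proof of Proposition~\ref{compphiol}.

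The base case is immediate: by hypothesis $O^\ell_0=o=O_0$, hence $O^\ell_0\preceq O_0$. For the inductive step, assuming $O^\ell_t\preceq O_t$, I would chain two inequalities. First, Proposition~\ref{compphiol} applied at the point $O^\ell_t$ with the driving vector $R_{t+1}$ gives $O^\ell_{t+1}=\Phi_O^{\ell}(O^\ell_t,R_{t+1})\preceq\Phi_O(O^\ell_t,R_{t+1})$. Second, monotonicity of $\Phi_O(\cdot,R_{t+1})$ together with the induction hypothesis $O^\ell_t\preceq O_t$ gives $\Phi_O(O^\ell_t,R_{t+1})\preceq\Phi_O(O_t,R_{t+1})=O_{t+1}$. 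Combining the two displays yields $O^\ell_{t+1}\preceq O_{t+1}$, which closes the induction and proves the proposition.

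I do not expect any genuine obstacle here: the substantive work has been front-loaded into Proposition~\ref{compphiol} (the pointwise domination $\Phi_O^{\ell}\preceq\Phi_O$) and into the monotonicity of $\Phi_O$ borrowed from \cite{Cerf}. The only points requiring care are bookkeeping: one must make sure the coupling is genuinely set up so that $\Ot$ and $\Otl$ use the same realisation of $(R_n)_{n\geq 1}$, and that the monotonicity of $\Phi_O$ is used in the right form, namely $o\preceq o'\Rightarrow\Phi_O(o,r)\preceq\Phi_O(o',r)$ for every $r\in\cR$. Once those facts are in hand, the monotone-coupling argument above is essentially mechanical, and the same scheme will reappear, \emph{mutatis mutandis}, when comparing $\Ot$ with the upper process $\Otk$.
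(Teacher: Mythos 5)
Your induction — base case from the common start, then chaining Proposition~\ref{compphiol} with the monotonicity of $\Phi_O(\cdot,r)$ under the shared driving sequence $(R_n)_{n\geq 1}$ — is correct and is exactly the monotone-coupling argument the paper relies on, since it simply refers to the proof of proposition 8.1 of \cite{Cerf}, which proceeds in the same way. No gaps; your caveat about using the same realisation of $(R_n)_{n\geq 1}$ is precisely the coupling under which the pathwise inequality (and hence the stochastic domination) holds.
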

The proof is similar to the proof of proposition 8.1 of \cite{Cerf}.

\subsection{Dynamics of the lower process}\label{Dynalow}
We study now the dynamics of the lower process 
$(O^\ell_t)_{t\geq 0}$
in $\cW$.
For the process
$(O^\ell_t)_{t\geq 0}$, 
the states in the set
$$\cT^\ell\,=\,\big\{\,o\in\pml:
o(0)\geq 1\text{ and }\ozk+o(\ell)<m
\,\big\}\,
\index{$\cT^\ell$}
$$
are transient,
and the states in the set
$\smash{\cN\cup\big(\cW\setminus\cT^\ell\big)}$ 
form a recurrent class.
We will therefore focus on the dynamics of the process $\Otl$
restricted to
$\smash{\cW\setminus\cT^\ell}$.
Since
$$\smash{\cW\setminus\cT^\ell}
\,=\,\big\{\,
o\in\pml:
o(0)\geq 1\text{ and }\ozk+o(\ell)=m
\,\big\}\,,$$
a state in
$\smash{\cW\setminus\cT^\ell}$
is completely determined by the occupation numbers of the classes
$0,\dots,K$.
The process 
$(O^\ell_t)_{t\geq 0}$ always enters the set
$\cW\setminus\cT^\ell$ at
$o^\ell_{\text{enter}}$. 
For $i\in\zk$, we denote by $w_i$ the vector of
$\N^{K+1}$ given by:
$$
\forall l\in\zk\qquad 
w_i(l)\,=\, \begin{cases}
1 &\text{if $l=i$}\,, \\
0&\text{otherwise}\,.
\end{cases}
$$
The only possible transitions for the Hamming classes
$0,\dots,K$ of the process
$(O^\ell_t)_{t\geq 0}$
starting from a state in
$\smash{\cW\setminus\cT^\ell}$ are
\begin{multline*}
(o(0),\dots,o(K))
\quad \longrightarrow\quad
(o(0),\dots,o(K))-w_i \\
\text{if }\ 
1\leq o(i),\quad 0\leq i\leq K\,,
\end{multline*}
\begin{multline*}
(o(0),\dots,o(K))
\quad \longrightarrow\quad
(o(0),\dots,o(K))+w_i \\
\text{if }\ 
\ozk\leq m-1,\quad 0\leq i\leq K\,,
\end{multline*}
\begin{multline*}
(o(0),\dots,o(K))
\quad \longrightarrow\quad
(o(0),\dots,o(K))-w_i+w_j \\
\text{if }\ 
1\leq o(i),\quad 0\leq i,j\leq K,\quad i\neq j\,.
\end{multline*}
The process
$(O^\ell_t)_{t\geq 0}$ 
always exits the set
$\cW\setminus\cT^\ell$ at
$o^\ell_{\text{exit}}$.
If the process $\Otl$ starts from a state in $\smash{\cW\setminus\cT^\ell}$,
until the time of exit from $\smash{\cW\setminus\cT^\ell}$,
the dynamics of $\big( O^\ell_t(0),\dots,O^\ell_t(K) \big)$
is that of a Markov chain on the state space
%If the process
%$(O^\ell_t)_{t\geq 0}$ starts from a state in
%$\smash{\cW\setminus\cT^\ell}$
%the dynamics of
%$(O^\ell_t(0),\dots,O^\ell_t(K))_{t\geq 0}$ 
%is that of a Markov chain,
%until the exit time from
%$\cW\setminus \cT^\ell$. 
%Let
%$(Z^\ell_t)_{t\geq 0}\index{$Z^\ell_t$}$ 
%be a Markov chain with state space
$$\cE_K\,=\,
\big\lbrace\, z\in\N^{K+1} : z_0+\cdots+z_K \leq m  \,\big\rbrace\,.$$
%starting from the point
%$z^\ell\,=\,
%(1,0,\dots,0),$
%and having the following transition matrix:
Let us compute the associated transition probabilities.
Let $z\in\cE_K$.

$\bullet $ For
$0\leq i\leq K$ and
$0<z_0+\cdots+z_K<m\,,$
$$
p(z,z+w_i)\,=\,
\frac{m-\displaystyle \sum_{l=0}^K z_l}{m((\s-1)z_0+m)}%\\
\times\Biggr( \s z_0 M_H(0,i)+
\sum_{l=1}^{i} z_l M_H(l,i)
\Biggr)\,.
$$
\par\noindent
$\bullet $ For
$0\leq i\leq K$ and
$1\leq z_i\,,$
\begin{multline*}
p(z,z-w_i)\,=\,
\frac{z_i}{m((\s-1)z_0+m)}\times\\
\Biggr( \s z_0 \biggr(1-\sum_{h=0}^K M_H(0,h)\biggr)
+\sum_{l=1}^K z_l \biggr(1-\sum_{h=l}^K M_H(l,h)\biggr)+
m-\sum_{l=0}^K z_l \Biggr)\,.
\end{multline*}
\par\noindent
$\bullet $ For
$0\leq i,j\leq K$, $i\neq j$ and
$1\leq z_i\,,$
\begin{multline*}
p(z,z-w_i+w_j)\,=\,
\frac{z_i}{m((\s-1)z_0+m)}
\times\Biggr( \s z_0 M_H(0,j)+
\sum_{l=1}^{j} z_l M_H(l,j)
\Biggr).
\end{multline*}
The other non--diagonal coefficients of the matrix are null.
The diagonal coefficients are arranged so that the matrix is stochastic, i.e.,
the sum over each row equals 1.

Since we are interested in the dynamics of $\Otl$
in $\smash{\cW\setminus\cT^\ell}$,
the transition probabilities starting from a point in
$\lbrace\, z\in\cE_K :
z_0=0 \,\rbrace$
are not relevant.
Moreover, the law of the exit point from
$\lbrace\, z\in\cE_K :
z_0\geq 1 \,\rbrace$
is also not relevant,
what matters is the law of the exit time.
Therefore we will modify the matrix $p$
into another stochastic matrix $p^\ell$ 
such that:

$\bullet$ Starting from 
$\lbrace\, z\in\cE_K :
z_0=0 \,\rbrace$,
there is a jump with probability $1$
to $(1,0,\dots,0)$.

$\bullet$ The law of the exit time from
$\lbrace\, z\in\cE_K :
z_0\geq 1 \,\rbrace$
is unchanged,
but the exit point is
$(0,\dots,0)$
with probability 1.

More precisely, 
we define the matrix $p^\ell$
as follows:

$\bullet$ For $z,z'\in\cE_K$ with $z_0=0$,
\begin{align*}
p^\ell\big(z,(1,0,\dots,0)\big)\,&=\,1\,,\\
p^\ell(z,z')\,&=\,0\qquad
\text{if }\ z'\neq(1,0,\dots,0)\,.
\end{align*}
$\bullet $ For $z,z'\in\cE_K$ with
$z_0=1$ and $z'_0=0$,
\begin{align*}
p^\ell\big(z,(0,\dots,0)\big)\,&=\,
p(z,z-w_0)+\sum_{i=1}^K p(z,z-w_0+w_i)\,,\\
p^\ell(z,z')\,&=\,0\qquad \text{if }\ z'_0=0\ \text{ and }\ z'\neq(0,\dots,0)\,.
\end{align*}
Finally,
$p^\ell(z,z')=p(z,z')$
for all remaining
$z,z'\in\cE_K$.

Let $\Ztl$
be a Markov chain with state space $\cE_K$,
starting at the point $z^\ell=(1,0,\dots,0)$
and having for transition matrix $p^\ell$.
Since $\Otl$ always enters $\smash{\cW\setminus\cT^\ell}$ 
at $o^\ell_\text{enter}$
and its dynamics inside $\smash{\cW\setminus\cT^\ell}$
is the same as the dynamics of the chain $\Ztl$,
we will rely on this Markov chain to compute the desired estimates.

%Our aim is to replicate the dynamics of the process
%$(O^\ell_t(0),\dots,O^\ell_t(K))_{t\geq 0}$
%in the set $\smash{\cW\setminus\cT^\ell}$.
%In doing so, the following transition probabilities are of no importance:
%\begin{align*}
%&p(z,z')\quad \text{with}\quad z_0=0\,,\\
%&p(z,z')\quad \text{with}\quad z_0=1\text{ and }z'_0=0\,.
%\end{align*}
%As long as we leave the remaining transition probabilities untouched,
%we can choose these probabilities at our will.
%We will do so in the most convenient way,
%obtaining then a new transition matrix $p^\ell$
%with the following transition probabilities:

\subsection{The upper process}\label{Upper}
We build now the upper process $\Otk$.
First of all, let us explain loosely the dynamics of the upper process $\Otk$.
As long as there is no master sequence present in the population,
the upper process $\Otk$ evolves exactly as the original process $\Ot$.
When a master sequence appears,
all the chromosomes in the Hamming classes $K+1,\dots,\ell$
are sent to the class $K+1$.
As long as the master sequence is present in the population,
a mutation to any of the classes 
$K+1,\dots,\ell$
is directly sent to the class $K+1$.
Furthermore, 
for all $c<b$, the mutation probability from the Hamming class $b$
to the Hamming class $c$ is taken to be equal to
$M_H(c+1,c)$.
To make this construction rigorous,
we modify the mutation probabilities
${\big(
M_H(b,c),\ 0\leq b,c\leq \ell
\big)}$
and we define new mutation probabilities
${\big(
M^K_H(b,c),\ 0\leq b,c\leq \ell
\big)}$
for the process $\Otk$.
Let us set for $b\in\{0,\dots,K+1\}$ and
$c\in\{0,\dots,\ell\}$,
$$M_H^{K+1}(b,c)\,=\,
\begin{cases}
\quad M_H(c+1,c) & \quad\text{ if }\ 0\leq c<b\leq K+1\,,\\
\quad M_H(b,c) & \quad\text{ if }\ b\leq c\leq K\,,\\
\quad 0 & \quad\text{ if } c\in\{\,K+2,\dots,\ell\,\}\,.
\end{cases}$$
The coefficient
$M_H^{K+1}(b,K+1)$
is adjusted so that each row adds up to 1, i.e., we take, for
$b\in\{\,0,\dots,K+1\,\}$,
\begin{multline*}
M_H^{K+1}(b,K+1)\,=\,
1-\sum_{h=0}^K M_H^{K+1}(b,h)\,=\\
1
-\sum_{h=0}^{b-1} M_H(h+1,h)
-\sum_{h=b}^K M_H(b,h)\,.
\end{multline*}
Moreover, for $b\in\{K+2,\dots,\ell\}$,
we set
$$\forall c\in\{0,\dots,\ell\}\qquad
M_H^{K+1}(b,c)\,=\,
M_H(b,c)\,.$$
We must verify that 
$(M_H^{K+1}(b,c),0\leq b,c\leq\ell)$
is a stochastic matrix, i.e.,
that all entries of the matrix are non--negative
and that each row adds up to 1.
Since  $(M_H(b,c),0\leq b,c\leq\ell)$
is already a stochastic matrix,
these conditions are satisfied for the rows
$M_H^{K+1}(b,\cdotp)$,
$b\in\{\,K+2,\dots,\ell\,\}$.
For the first $K+2$ rows,
the only thing left to verify is that the coefficient
$M_H^{K+1}(b,K+1)$
is non--negative, in other words, that
$$\forall b\in\{\,0,\dots,K+1\,\}\qquad
\sum_{h=0}^{b-1} M_H(h+1,h)+
\sum_{h=b}^K M_H(b,h)
\,\leq\, 1\,.$$
However, we are interested in the asymptotic regime
$$\displaylines{
\ell\to +\infty\,,\qquad m\to +\infty\,,\qquad q\to 0\,,\cr
{\ell q} \to a\,,
\qquad\frac{m}{\ell}\to\alpha\,.}$$
Thus, it is enough to verify the preceding inequalities for
$\ell,m$ big enough and $q$ small enough.
The mutation probabilities have the following limits:
$$\forall b,c\geq 0\qquad
\lim_{
\genfrac{}{}{0pt}{1}{\ell,m\to\infty}
{q\to 0,\,
{\ell q} \to a}
} M_H(b,c)\,=\,
\begin{cases}
\quad 0 & \quad\text{ if }\ 0\leq c<b\,,\\
\quad\displaystyle \frac{a^{c-b}}{(c-b)!}e^{-a} & \quad\text{ if }\ 0\leq b\leq c\,. 
\end{cases}$$
We deduce that, for $b\in\lbrace\,0,\dots,K+1\,\rbrace$,
\begin{multline*}
\lim_{
\genfrac{}{}{0pt}{1}{\ell,m\to\infty}
{q\to 0,\,
{\ell q} \to a}
}\sum_{h=0}^{b-1} M_H(h+1,h)+
\sum_{h=b}^K M_H(b,h)\,=\\
\sum_{h=b}^K\frac{a^{h-b}}{(h-b)!}\exa\,=\,
\sum_{k=0}^{K-b}\frac{a^k}{k!}\exa\,<\,1\,.
\end{multline*}
Therefore, for
$\ell,m$ big enough and $q$ small enough,
the modified mutation matrix ${(M_H^{K+1}(b,c), 0\leq b,c\leq\ell)}$
is indeed stochastic.
We build now two maps
$$\cM'_H,\ \cM_H^{K+1}:
\zl\times[0,1]\lra\zl$$
in order to couple the mutation mechanisms of the processes
$\Ot$ and $\Otk$.
Naturally, this coupling will allow us to compare these processes.
Let ${b\in\zl}$ and $u\in [0,1]$.
We define $\cM'_H(b,u)$
to be the only index
$c\in\zl$ such that
$$M_H(b,0)+\cdots+M_H(b,c-1)\,<\,
u\,\leq\,
M_H(b,0)+\cdots+M_H(b,c)\,.
$$
Likewise, we define $\cM^{K+1}_H(b,u)$
to be the only index
$c\in\zl$ such that
$$M^{K+1}_H(b,0)+\cdots+M^{K+1}_H(b,c-1)\,<\,
u\,\leq\,
M_H^{K+1}(b,0)+\cdots+M_H^{K+1}(b,c)\,.
$$
\begin{lemma}\label{domimut}
The map
$\cM'_H$
is above the map
$\cM^{K+1}_H$ in the following sense:
$$
\forall b\in\zl\quad \forall u\in [0,1]\qquad
\cM'_H(b,u)\,\geq\,
\cM_H^{K+1}(b,u)\,.$$
\end{lemma}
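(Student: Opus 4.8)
The plan is to compare the two cumulative sums
$F(b,c) = M_H(b,0)+\cdots+M_H(b,c)$ and
$F^{K+1}(b,c) = M_H^{K+1}(b,0)+\cdots+M_H^{K+1}(b,c)$
coordinate by coordinate in $c$, and to show that $F(b,c)\leq F^{K+1}(b,c)$ for every $c\in\zl$. Once this domination of partial sums is established, the conclusion is immediate: if $u\leq F(b,c)$ then $u\leq F^{K+1}(b,c)$, so the least index $c'$ with $u\leq F^{K+1}(b,c')$ is at most the least index with $u\leq F(b,c)$, which is exactly the inequality $\cM'_H(b,u)\geq\cM_H^{K+1}(b,u)$. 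So the whole lemma reduces to the pointwise inequality $F(b,c)\leq F^{K+1}(b,c)$.

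Next I would split into cases according to where $b$ sits. For $b\in\{K+2,\dots,\ell\}$ the two rows coincide by definition, so $F=F^{K+1}$ and there is nothing to prove. For $b\in\{0,\dots,K+1\}$, I would treat three ranges of $c$. For $0\leq c<b$, the terms of the modified row are $M_H^{K+1}(b,h)=M_H(h+1,h)$ for $h\leq c$, whereas the original row contributes $M_H(b,h)$; here the key point is that for $h<b$ the quantities $M_H(b,h)$ and $M_H(h+1,h)$ are both mutation probabilities to a strictly lower Hamming class, and one checks from the explicit formula for $M_H$ that $M_H(b,h)\leq M_H(h+1,h)$ — intuitively, climbing down more classes is less likely, and this should follow from a binomial/combinatorial comparison of the defining sums (or, more cheaply, from the fact that both are $O(q^{b-h})$ resp. $O(q)$ and we only need the inequality for $q$ small, as in the stochasticity verification already carried out in the excerpt). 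Summing over $h\leq c$ gives $F^{K+1}(b,c)\geq F(b,c)$ on this range. For $b\leq c\leq K$, the modified row first accumulates the extra mass $\sum_{h=0}^{b-1}M_H(h+1,h)$ on classes below $b$, then agrees term-by-term with $M_H(b,h)$ for $b\leq h\leq c$; hence $F^{K+1}(b,c)=F(b,c)+\sum_{h=0}^{b-1}M_H(h+1,h)\geq F(b,c)$. For $c\geq K+1$ both cumulative sums have reached the value $1$ (this is exactly the normalization forcing $M_H^{K+1}(b,K+1)$, together with $M_H^{K+1}(b,\cdot)=0$ beyond $K+1$, and the analogous trivial fact $F(b,\ell)=1$), so $F^{K+1}(b,c)=1\geq F(b,c)$.

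The main obstacle is the pointwise comparison $M_H(b,h)\leq M_H(h+1,h)$ for $h<b$ in the range $0\leq c<b$; everything else is either a definitional identity or a telescoping of nonnegative terms. The cleanest way around it, consistent with the rest of the paper, is to note that we only ever use Lemma \ref{domimut} in the asymptotic regime $\ell,m\to\infty$, $q\to0$, $\ell q\to a$, where $M_H(b,h)\to 0$ for $h<b$ while $M_H(h+1,h)\to 0$ as well but at the slower rate governed by a single ``wrong-to-right'' correction; making this precise for $\ell$ large and $q$ small suffices, exactly as the stochasticity of $M_H^{K+1}$ was only verified asymptotically. With that comparison in hand, the three-range summation above yields $F(b,c)\leq F^{K+1}(b,c)$ for all $c$, and the lemma follows.
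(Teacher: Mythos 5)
Your plan is essentially the paper's own proof: the paper likewise reduces the lemma to the domination of the partial sums, $\sum_{h=0}^{c}M_H(b,h)\le\sum_{h=0}^{c}M_H^{K+1}(b,h)$ for all $b,c$, which it deduces from the single pointwise inequality $M_H(b,h)\le M_H(h+1,h)$ for $h<b$ (stated there without proof), exactly the key fact you isolate. Your extra observations — that only the rows $b\le K+1$ require this comparison (the others coincide), and that it suffices to verify it for $\ell$ large and $q$ small, the only regime in which $M_H^{K+1}$ is even a stochastic matrix — are consistent with how the lemma is used, so the proposal is correct and takes the same route.
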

\begin{proof}
Since 
$M_H(b,c)\leq M_H(c+1,c)$ for $b>c$,
it follows from the definition of
the matrix
$(M_H^{K+1}(b,c),0\leq b,c\leq \ell)$ 
that
$$\forall b,c\in\zl\qquad
\sum_{h=0}^c M_H(b,h)\,\leq\,
\sum_{h=0}^c M_H^{K+1}(b,h)\,.$$
These inequalities imply the desired result.
\end{proof}
We have also the following result:
\begin{lemma}\label{monomut}
The map $\cM'_H$ is non--decreasing with respect to its first argument, i.e.,
$$\forall b,c\in\zl\quad
\forall u\in [0,1]\qquad
b\leq c\,\Ra\,
\cM'_H(b,u)\leq \cM'_H(c,u)\,.$$
\end{lemma}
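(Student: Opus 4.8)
The plan is to deduce the monotonicity of $\cM'_H$ in its first argument from a stochastic domination property of the rows of $M_H$, and to prove that property by a locus–by–locus monotone coupling of the mutation mechanism.

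First I would record the reduction. It suffices to prove the inequality
$$\sum_{h=0}^{j}M_H(c,h)\,\le\,\sum_{h=0}^{j}M_H(b,h)\qquad
\text{for all }j\in\zl\text{ and all integers }b\le c.$$
Indeed, fix $u\in[0,1]$ and $b\le c$ and set $d=\cM'_H(b,u)$. If $d=0$ there is nothing to prove; if $d\ge 1$, then by definition of $\cM'_H$ we have $M_H(b,0)+\cdots+M_H(b,d-1)<u$, hence, applying the displayed inequality with $j=d-1$, also $M_H(c,0)+\cdots+M_H(c,d-1)<u$, which by definition of $\cM'_H$ forces $\cM'_H(c,u)\ge d=\cM'_H(b,u)$. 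In other words, the lemma amounts to the statement that the probability vector $\big(M_H(c,h)\big)_h$ stochastically dominates $\big(M_H(b,h)\big)_h$ whenever $b\le c$.

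To establish this domination I would use the probabilistic meaning of the formula defining $M_H$: $M_H(b,\cdot)$ is the law of $b+X-Y$, where $X$ and $Y$ are independent, $X\sim\mathrm{Binomial}\big(\ell-b,\,p(1-1/\kappa)\big)$ counts the loci agreeing with the master that turn into disagreements, and $Y\sim\mathrm{Binomial}\big(b,\,p/\kappa\big)$ counts the loci disagreeing with the master that turn into agreements. Realize the mutation step locus by locus, and pick a master sequence together with two chromosomes, one in Hamming class $b$ whose set of loci disagreeing with the master is contained in that of the second chromosome, which is in class $c$; this is possible because $b\le c$, and legitimate because the lumped mutation law depends only on the pattern of agreements and disagreements with the master. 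Now drive the mutations of both chromosomes with one common family of uniform variables on $[0,1]$, one per locus, using at each locus the standard monotone coupling of the two–state chain "agreement $\to$ agreement with probability $1-p(1-1/\kappa)$, disagreement $\to$ agreement with probability $p/\kappa$". This coupling is monotone precisely because $p/\kappa\le 1-p(1-1/\kappa)$, i.e. because these two probabilities sum to $p$ and $p<1$ (equivalently $q<1-1/\kappa$, which is assumed). Therefore, if before the step the first chromosome agrees with the master at every locus where the second one does, then the same holds after the step; consequently, almost surely, the new Hamming class of the first chromosome does not exceed that of the second, so for every $j$ the event that the second chromosome lands in a class $\le j$ is contained in the event that the first one does, and this is exactly the partial–sum inequality displayed above.

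The argument is short, and the one point deserving care is the locus–wise monotone coupling: one must check, directly from the formula for $M_H$, that an agreeing locus becomes a disagreement with probability $p(1-1/\kappa)$ and a disagreeing locus becomes an agreement with probability $p/\kappa$, and that these two numbers add up to $p<1$, which is what allows a single uniform variable to couple the two loci monotonically. Should one prefer to avoid any probabilistic language, the same partial–sum inequality can instead be obtained by a direct manipulation of the binomial sums defining $M_H(b,h)$; the coupling route seems cleaner, and that is the one I would take.
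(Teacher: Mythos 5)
Your proposal is correct and follows essentially the same route as the paper: you reduce the lemma to the stochastic domination of the rows of $M_H$ (the partial--sum inequality) and then prove that domination by realizing the lumped mutation with one uniform variable per locus and a monotone coupling, which is exactly the map $\cM_H$ and lemma 7.1 of \cite{Cerf} that the paper invokes. The only difference is cosmetic: you re-derive the locus-wise monotone coupling (using $p/\kappa\le 1-p(1-1/\kappa)$) instead of citing it.
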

\begin{proof}
We consider the map
$$\cM_H:\zl\times [0,1]^\ell\to\zl$$
defined in section 7.1 of \cite{Cerf} by
\begin{multline*}
\forall b\in\zl\quad
\forall u_1,\dots,u_\ell\in[0,1]^\ell\\
\cM_H(b,
u_1,\dots,u_\ell)\,=\,
b-\sum_{k=1}^b1_{u_k<p/\kappa}
+\sum_{k=b+1}^\ell1_{u_k>1-p(1-1/\kappa)}
\,.
\end{multline*}
The interest of this map lies in the following fact:
if $U_1,\dots,U_\ell$
are i.i.d. uniform random variables taking values on the interval $[0,1]$,
then for all
$b\in\zl$, the law of 
$\cM_H(b,U_1,\dots,U_\ell)$
is given by the $b$--th row of the mutation matrix $M_H$, i.e.,
$$\forall c\in\zl\qquad P\big(
\cM_H(b,
U_1,\dots,U_\ell)=c\big)\,=\,M_H(b,c)\,.$$
Moreover, we know thanks to lemma 7.1 of \cite{Cerf}
that the map $\cM_H$ is non--decreasing with respect to the Hamming class, i.e.,
for all
$b,c\in\zl$ and $u_1,\dots,u_\ell\in[0,1]$,
$$b\leq c\,\Ra\, 
\cM_H(b,u_1,\dots,u_\ell)\leq
\cM_H(c,u_1,\dots,u_\ell)\,.$$
Take 
$a,b,h\in\zl$ with $a\leq b$ and let
$U_1,\dots,U_\ell$ be i.i.d. uniform random variables on $[0,1]$.
Thanks to the properties of the map $\cM_H$, we have
\begin{multline*}
M_H(a,0)+\cdots+M_H(a,h)\,=\,
P\big(\cM_H(a,U_1,\dots,U_\ell)\leq h\big)\,\geq\\
P\big(\cM_H(b,U_1,\dots,U_\ell)\leq h\big)\,=\,
M_H(b,0)+\cdots+M_H(b,h)\,.
\end{multline*}
This implies the desired result.
\end{proof}
Let us define
$$\cR'\,=\, 
[0,1]\times\zl^2\times [0,1]\,.$$
We build next two coupling maps
$$\Phi'_O, 
\overline{\Phi}_O
:
\pml\times\cR'\lra\pml\,.$$
Take $r=(s,i,j,u)\in\cR'$, and
$o\in\pml$.
We set $l=\cS_O(o,s)$ 
and we set $k$ to be the only index in $\{0,\dots,\ell\}$ such that
$$o(0)+\cdots+o(k-1)\,<\,
j\,\leq\,
o(0)+\cdots+o(k).$$
The maps
$\Phi'_O$ et $\overline{\Phi}_O$
are defined by:
\begin{align*}
\Phi'_O(o,r)\,&=\,
o\big(k\ra \cM'_H(l,u)\big)\,,\\
\overline{\Phi}_O(o,r)\,&=\,
o\big( k\ra \cM^{K+1}_H(l,u) \big)\,.
\end{align*}
We have thanks to lemma \ref{domimut} that
$$\forall r\in\cR'\quad 
\forall o\in\pml\qquad
\Phi'_O(o,r)
\preceq\overline{\Phi}_O(o,r)
\,.$$
Let
$$
R'_n=(S'_n,I'_n,J'_n,U'_n),\quad
n\geq 1
$$
be an i.i.d. sequence of random vectors with values in $\cR'$,
the random variables $S'_n,I'_n,J'_n,U'_n$
being independent and having the uniform law in their corresponding spaces.
We also take the sequence $(R'_n)_{n\geq 1}$
to be independent of the sequence $(R_n)_{n\geq 1}$
defined in section \ref{Lower}.
We build the process $\Ot$
with the help of the sequence $(R'_n)_{n\geq 1}$.
Let $o\in\pml$ be the starting point of the process,
we set $O_0=o$ and
$$\forall n\geq 1\qquad
O_n\,=\,
\Phi'_O(O_{n-1},R'_n)\,.$$
The next lemma shows that the process $\Ot$ 
is monotone.
\begin{lemma}\label{monophiopr}
The coupling map $\Phi'_O$ 
is non--decreasing with respect to the occupancy distribution,
i.e.,
$$\forall o,o' \in\pml \quad \forall r\in\cR'
\qquad
o\preceq o' 
\ \Rightarrow\ 
\Phi'_O(o,r)\,\preceq\,
\Phi'_O(o',r)\,.
$$
\end{lemma}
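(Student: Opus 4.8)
The statement to prove is Lemma \ref{monophiopr}: the coupling map $\Phi'_O$ is non-decreasing with respect to $\preceq$.

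\textbf{Proof proposal.}

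The plan is to mimic the proof of monotonicity of the original coupling map $\Phi_O$ from \cite{Cerf} (section 7.1), adapting it to the new map $\Phi'_O$, whose only difference from $\Phi_O$ is that mutations are resolved through $\cM'_H$ rather than through $\cM_H$. So I would first recall the structure of $\Phi'_O$: given $o\preceq o'$ and $r=(s,i,j,u)\in\cR'$, the map reads off a "source" class and a "target" class. The source is determined by the index $k$ (resp.\ $k'$) cut out by $j$ from the partial sums of $o$ (resp.\ $o'$), and one first checks how $k$ and $k'$ compare; the target is $\cM'_H(l,u)$ where $l=\cS_O(o,s)$ (resp.\ $l'=\cS_O(o',s)$). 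The transition then transfers one chromosome from the source class to the target class: $o\mapsto o(k\ra\cM'_H(l,u))$.

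The key inputs are already in place. First, $\cS_O$ is the selection map from \cite{Cerf}, which is monotone in the occupancy distribution, so $o\preceq o'$ gives a usable comparison between $l$ and $l'$ (the relevant fact being that the class selected in the larger configuration is no larger, in the appropriate sense, reflecting that $\cW^*$-type classes are more populated). Second, Lemma \ref{monomut} gives that $\cM'_H$ is non-decreasing in its first argument, i.e.\ $b\le c\Rightarrow\cM'_H(b,u)\le\cM'_H(c,u)$ for every fixed $u$; this plays exactly the role that lemma 7.1 of \cite{Cerf} plays for $\cM_H$ in the original argument. With these two facts, the combinatorial case analysis is formally identical to the one in \cite{Cerf}. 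I would therefore organize the proof as: fix $o\preceq o'$ and $r$; introduce $k,k',l,l'$; record $l\,?\,l'$ from monotonicity of $\cS_O$ and $k\,?\,k'$ from the partial-sum comparison; then split into the (few) cases according to whether the source index in $o$ and in $o'$ coincide or differ, and in each case use Lemma \ref{monomut} together with the elementary fact that moving a chromosome to a not-larger class, from a not-smaller class, preserves $\preceq$, to conclude $\Phi'_O(o,r)\preceq\Phi'_O(o',r)$.

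The only point requiring genuine care — and the main obstacle — is the bookkeeping when the source class differs between $o$ and $o'$: because $o\preceq o'$ only compares \emph{prefix sums}, the index $k$ cut out of $o$ by $j$ may be strictly larger than the index $k'$ cut out of $o'$, and one must check that transferring a chromosome out of class $k$ in $o$ and out of class $k'$ in $o'$, landing in classes $\cM'_H(l,u)$ and $\cM'_H(l',u)$ respectively, still keeps the prefix-sum inequalities intact for every level $h\in\zl$. This is precisely the delicate step handled in the proof of the analogous monotonicity statement in \cite{Cerf}, and since $\cM'_H$ shares with $\cM_H$ the monotonicity property used there (Lemma \ref{monomut}), that argument transfers verbatim; I would write "the proof is identical to that of the corresponding lemma in \cite{Cerf}" after isolating the two structural facts (monotonicity of $\cS_O$ and of $\cM'_H$) on which it rests, rather than reproducing the full case split.
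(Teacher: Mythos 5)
Your proposal is correct and takes essentially the same route as the paper: the paper itself omits the argument, stating only that the proof is very similar to that of lemma 7.5 in \cite{Cerf}, and your sketch isolates exactly the two ingredients that make that transfer work, namely the monotonicity of the selection mechanism $\cS_O$ and the monotonicity of $\cM'_H$ in its first argument (Lemma \ref{monomut}), which replaces lemma 7.1 of \cite{Cerf} in the case analysis. Your identification of the prefix-sum bookkeeping for the source indices $k,k'$ as the only delicate step is also the right reading of Cerf's argument, so nothing essential is missing.
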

The proof is very similar to that of lemma 7.5 in \cite{Cerf},
so we do not include it here.
We build next the upper process
$\Otk$.
We define a map
$\pi_{K+1}:\pml\to\pml$
as follows: for
$o\in\pml$ and $l\in\zl$,
\begin{align*}
\pi_{K+1}(o)(l)\,&=\,
\begin{cases}
\quad o(l) & \text{if}\ 0\leq l\leq K\,, \\
\quad m-(o(0)+\cdots +o(K)) & \text{if}\ l=K+1\,,\\
\quad 0 & \text{if}\ K+2\leq l\leq\ell\,.
\end{cases}
\end{align*}
This map satisfies
$$
\forall o\in\pml\qquad 
o \, \preceq\, 
\pi_{K+1}(o)\,.$$
We also define
\begin{align*}
o^{K+1}_{\text{exit}}\,&=\,
(0,m,0,\dots,0)\,,
\qquad 
&&o^{K+1}_{\text{enter}}\,=\,
(1,m-1,0,\dots,0)\,.
\end{align*}
We build an upper map
$\Phi_O^{K+1}$ by setting
for $o\in\pml$ and $r\in\cR'$,
\begin{equation*}
\index{$\Phi_O^{K+1}$}
\Phi_O^{K+1}(o,r)=
\begin{cases}
\,
\Phi'_{O}(o,r)
&\text{if }o\in\cN \,\,{\text{ and }} 
\Phi'_O(o,r)\not\in\cW \\
\,
o^{K+1}_{\text{enter}}
&\text{if }o\in\cN \,\,{\text{ and }}
\Phi'_O(o,r)\in\cW \\
\,
\pi_{K+1}\big(\overline{\Phi}_{O}(\pi_{K+1}(o),r)\big)
&\text{if }o\in\cW\ \text{and }
\overline{\Phi}_{O}(\pi_{K+1}(o),r)\not\in\cN\\
\,
o^{K+1}_{\text{exit}}
&\text{if }o\in\cW\ \text{and }
\overline{\Phi}_{O}(\pi_{K+1}(o),r)\in\cN\\
\end{cases}
\end{equation*}
A proof similar to that of proposition \ref{compphiol} shows that:
\begin{proposition}\label{comphiok} For all
$r\in\cR'$
and for all
$o\in\pml$,
$$
\Phi'_{O}(o,r)
\,\preceq\,
\Phi_O^{K+1}(o,r)
\,.
$$
\end{proposition}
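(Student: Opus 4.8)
The plan is to transcribe the four-case argument from the proof of Proposition~\ref{compphiol}, reversing the direction of every $\preceq$-inequality, since here we want an upper bound. The tools are the monotonicity of the coupling map $\Phi'_O$ (Lemma~\ref{monophiopr}), the pointwise domination $\Phi'_O(o,r)\preceq\overline{\Phi}_O(o,r)$ deduced from Lemma~\ref{domimut}, and the elementary inequality $o\preceq\pi_{K+1}(o)$, valid for every $o\in\pml$. Fixing $r\in\cR'$ and $o\in\pml$, I would split into the same four cases. If $o\in\cN$ and $\Phi'_O(o,r)\notin\cW$, then $\Phi_O^{K+1}(o,r)=\Phi'_O(o,r)$ by definition and the inequality is an equality. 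If $o\in\cN$ and $\Phi'_O(o,r)\in\cW$, then since a single Moran step changes the class of at most one chromosome and $o(0)=0$, we have $\Phi'_O(o,r)(0)=1$; as $o^{K+1}_{\text{enter}}$ is the $\preceq$-largest occupancy distribution whose class $0$ contains exactly one chromosome, this gives $\Phi'_O(o,r)\preceq o^{K+1}_{\text{enter}}=\Phi_O^{K+1}(o,r)$.

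For the two cases with $o\in\cW$, the key is the chain
\[
\Phi'_O(o,r)\ \preceq\ \Phi'_O(\pi_{K+1}(o),r)\ \preceq\ \overline{\Phi}_O(\pi_{K+1}(o),r)\ \preceq\ \pi_{K+1}\big(\overline{\Phi}_O(\pi_{K+1}(o),r)\big),
\]
where the first step uses monotonicity of $\Phi'_O$ together with $o\preceq\pi_{K+1}(o)$, the second the domination of $\Phi'_O$ by $\overline{\Phi}_O$, and the third the inequality $o'\preceq\pi_{K+1}(o')$ with $o'=\overline{\Phi}_O(\pi_{K+1}(o),r)$. When $\overline{\Phi}_O(\pi_{K+1}(o),r)\notin\cN$ the right-hand side of the chain is exactly $\Phi_O^{K+1}(o,r)$, which settles that case. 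When $\overline{\Phi}_O(\pi_{K+1}(o),r)\in\cN$ one has $\Phi_O^{K+1}(o,r)=o^{K+1}_{\text{exit}}$; comparing the class-$0$ occupation numbers in $\Phi'_O(o,r)\preceq\overline{\Phi}_O(\pi_{K+1}(o),r)$ forces $\Phi'_O(o,r)(0)=0$, so $\Phi'_O(o,r)\in\cN$, and since $o^{K+1}_{\text{exit}}$ is the $\preceq$-largest element of $\cN$ we get $\Phi'_O(o,r)\preceq o^{K+1}_{\text{exit}}$.

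I expect this last case to be the only one needing slightly more care than its counterpart in the proof of Proposition~\ref{compphiol}: there $o^\ell_{\text{exit}}$ was the global $\preceq$-minimum, so the inequality $o^\ell_{\text{exit}}\preceq\Phi_O(o,r)$ held for free, whereas here $o^{K+1}_{\text{exit}}$ is merely the $\preceq$-largest element of $\cN$, so one must first verify that $\Phi'_O(o,r)$ carries no master sequence before invoking extremality. Everything else is a mechanical transcription with inequalities reversed, and no new estimate is required.
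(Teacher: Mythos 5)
Your proposal is correct and is essentially the argument the paper intends: the paper only says the proof is ``similar to that of Proposition~\ref{compphiol}'', and your four-case transcription with the inequalities reversed, using Lemma~\ref{monophiopr}, the domination $\Phi'_O\preceq\overline{\Phi}_O$ from Lemma~\ref{domimut}, and $o\preceq\pi_{K+1}(o)$, is exactly that mirrored argument. Your extra check in the exit case (verifying $\Phi'_O(o,r)\in\cN$ before using that $o^{K+1}_{\text{exit}}$ is only maximal within $\cN$, not globally) is the one genuine asymmetry with the lower-process proof, and you handle it correctly.
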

%\begin{proof}
%Soient $r\in\cR'$
%et
%$o\in\pml$. Nous regardons les cas suivants
%
%$\bullet$ Si $o\in\cN$ et $\Phi'_O(o,r)\not\in\cW$, alors
%$$
%\Phi'_O(o,r)\,=\,
%\Phi^{K+1}_O(o,r)
%\,.$$
%$\bullet$ Si $o\in\cN$ et $\Phi'_O(o,r)\in\cW$,
%nous avons
%$$
%\Phi'_O(o,r)(0)\,=\,1
%\qquad \text{et}\qquad
%\Phi_O^{K+1}(o,r)\,=\,o^{K+1}_{\text{enter}}\,.$$
%Nous déduisons l'inégalité voulue du faite suivant:
%$$\forall o\in\pml \ \text{ avec }\ o(0)=1,\qquad 
%o
%\,\preceq\,
%o^{K+1}_{\text{enter}}
%\,.$$
%$\bullet$ Si $o\in\cW$, et 
%$\overline{\Phi}_O(\pi_{K+1}(o),r)\not\in\cN$,
%comme l'application
%$\overline{\Phi}_O$
%reste au dessus de l'application
%$\Phi'_O$,
%nous avons 
%$$
%\Phi'_O(\pi_{K+1}(o),r)
%\,\preceq\,
%\overline{\Phi}_O(\pi_{K+1}(o),r)
%\,.$$
%Comme de plus
%$\pi_{K+1}(o)\preceq o$,
%et que $\phi_O$ est monotone,
%nous concluons que
%$$
%\Phi'_O(o,r)
%\,\preceq\,
%\pi_{K+1}\Big( \overline{\Phi}_O(\pi_{K+1}(o),r) \Big)
%\,.$$
%$\bullet$ Si $o\in\cW$, et
%$\overline{\Phi}_O(\pi_{K+1}(o),r)\in\cN$,
%nous montrons comme dans le point précédent que 
%$$\Phi'(o,r)\,\preceq\,
%\overline{\Phi}_O(\pi_{K+1}(o),r)\,,
%$$
%ce qui entraîne que $\Phi'_O(o,r)\in\cN$.
%De plus,
%$\Phi^{K+1}_O(o,r)=o^{K+1}_{\text{exit}}$ 
%et nous concluons en remarquant que
%pour tout $o\in\cN$,
%$o\,\preceq\,
%o^{K+1}_{\text{exit}}\,.$
%
%Enfin, pour tout $o\in\pml$ et pour tout $r\in\cR'$,
%$$
%\Phi'_O(o,r)
%\,\preceq\,
%\Phi^{K+1}_O(o,r)
%\,.$$
%\end{proof}
We define an upper process
$(O^{K+1}_t)_{t\geq 0}$ 
with the help of the i.i.d. sequence
$(R'_n)_{n\geq 1}$ 
and the map
$\Phi_O^{K+1}$.
Let $o\in\pml$ be the starting point of the process,
we set
$O^{K+1}_0=o$ and
$$\forall n\geq 1\qquad
O^{K+1}_n\,=\,\Phi_O^{K+1}\big(O^{K+1}_{n-1}, R'_n\big)
\,.
$$
\begin{proposition}\label{compok}
Suppose that the processes
$(O_t)_{t\geq 0}$,
$(O^{K+1}_t)_{t\geq 0}$,
have the same starting occupancy distribution $o$.
We have then
$$\forall t\geq 0\qquad
O_t
\,\preceq\,
O^{K+1}_t
\,.$$
\end{proposition}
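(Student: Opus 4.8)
The plan is to realise the two processes on a common probability space, driven by the same i.i.d. sequence $(R'_n)_{n\geq 1}$ with values in $\cR'$, exactly as in the constructions above: $O_n=\Phi'_O(O_{n-1},R'_n)$ and $O^{K+1}_n=\Phi_O^{K+1}(O^{K+1}_{n-1},R'_n)$, both started at $o$. Once this is set up, I would prove the pathwise domination $O_t\preceq O^{K+1}_t$ for every $t\geq 0$ by a straightforward induction on $t$, following the same scheme as the proof of proposition 8.1 of \cite{Cerf}.

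For $t=0$ there is nothing to prove, since $O_0=O^{K+1}_0=o$ by hypothesis. For the inductive step, suppose $O_{n-1}\preceq O^{K+1}_{n-1}$ for some $n\geq 1$. By Lemma \ref{monophiopr}, the map $\Phi'_O$ is non-decreasing with respect to the occupancy distribution, hence
$$O_n\,=\,\Phi'_O(O_{n-1},R'_n)\,\preceq\,\Phi'_O(O^{K+1}_{n-1},R'_n)\,.$$
On the other hand, Proposition \ref{comphiok} applied to the occupancy distribution $O^{K+1}_{n-1}$ and to $r=R'_n$ gives
$$\Phi'_O(O^{K+1}_{n-1},R'_n)\,\preceq\,\Phi_O^{K+1}(O^{K+1}_{n-1},R'_n)\,=\,O^{K+1}_n\,.$$
Since the order $\preceq$ is transitive, these two inequalities yield $O_n\preceq O^{K+1}_n$, which closes the induction and proves the proposition.

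The only point deserving a word of care is that the map $\Phi'_O$ genuinely generates a process with the law of the occupancy process $\Ot$: this holds because $\cS_O(o,s)$ selects the reproducing chromosome with the correct probabilities and, for a fixed Hamming class $l$, the index $\cM'_H(l,u)$ has law equal to the $l$-th row of $M_H$ by its very definition, so that the conditional law of $O_n$ given $O_{n-1}$ is precisely $p_O(O_{n-1},\cdot)$ — the same verification as in section 7 of \cite{Cerf}. I do not expect any real obstacle here: the two substantive ingredients, namely the monotonicity of $\Phi'_O$ and the pointwise domination $\Phi'_O\preceq\Phi_O^{K+1}$, have already been obtained in Lemma \ref{monophiopr} and Proposition \ref{comphiok}, and the statement then follows by the routine coupling induction above. (The companion lower bound, Proposition \ref{compol}, is proved in exactly the same manner, using instead the monotonicity of $\Phi_O$ together with Proposition \ref{compphiol}.)
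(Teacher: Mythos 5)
Your argument is correct and is precisely the coupling induction that the paper invokes by referring to proposition 8.1 of \cite{Cerf}: the processes are driven by the same sequence $(R'_n)_{n\geq 1}$, and the step combines the monotonicity of $\Phi'_O$ (Lemma \ref{monophiopr}) with the pointwise bound $\Phi'_O\preceq\Phi_O^{K+1}$ (Proposition \ref{comphiok}). Nothing is missing; this is essentially the paper's own proof.
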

See proposition 8.1 of \cite{Cerf} for a detailed proof.

\subsection{Dynamics of the upper process}\label{Dynaup}
We will now study the dynamics of the upper process 
$\Otk$
in $\cW$.
For the process
$\Otk$, 
the states in the set
$$\cT^{K+1}\,=\,\big\{\,o\in\pml:
o(0)\geq 1\text{ and }\ozk+o(K+1)<m
\,\big\}\,
$$
are transient,
and the states in the set
$\smash{\cN\cup\big(\cW\setminus\cT^{K+1}\big)}$ 
form a recurrent class.
We will therefore focus on the dynamics of the process $\Otk$
restricted to
$\smash{\cW\setminus\cT^{K+1}}$.
Since
$$\smash{\cW\setminus\cT^{K+1}}
\,=\,\big\{\,
o\in\pml:
o(0)\geq 1\text{ and }\ozk+o(K+1)=m
\,\big\}\,,$$
a state in
$\smash{\cW\setminus\cT^{K+1}}$
is completely determined by the occupation numbers of the classes
$0,\dots,K$.
The process 
$\Otk$ always enters the set
$\cW\setminus\cT^{K+1}$ at
$o^{K+1}_{\text{enter}}$. 
The only possible transitions for the Hamming classes
$0,\dots,K$ of the process
$\Otk$
starting from a state in
$\smash{\cW\setminus\cT^{K+1}}$ are
\begin{multline*}
(o(0),\dots,o(K))
\quad \longrightarrow\quad
(o(0),\dots,o(K))-w_i \\
\text{if }\ 
1\leq o(i),\quad 0\leq i\leq K\,,
\end{multline*}
\begin{multline*}
(o(0),\dots,o(K))
\quad \longrightarrow\quad
(o(0),\dots,o(K))+w_i \\
\text{if }\ 
\ozk\leq m-1,\quad 0\leq i\leq K\,,
\end{multline*}
\begin{multline*}
(o(0),\dots,o(K))
\quad \longrightarrow\quad
(o(0),\dots,o(K))-w_i+w_j \\
\text{if }\ 
1\leq o(i),\quad 0\leq i,j\leq K,\quad i\neq j\,.
\end{multline*}
The process
$\Otk$ 
always exits the set
$\cW\setminus\cT^{K+1}$ at
$o^{K+1}_{\text{exit}}$.
If the process
$\Otk$ starts from a state in
$\smash{\cW\setminus\cT^{K+1}}$,
until the time of exit from
$\cW\setminus \cT^{K+1}$,
the dynamics of
$(O^{K+1}_t(0),\dots,O^{K+1}_t(K))_{t\geq 0}$ 
is that of a Markov chain
on the state space
$$\cE_K\,=\,
\lbrace\, z\in\N^{K+1} : z_0+\cdots+z_K \leq m  \,\rbrace\,.$$
Let us compute the associated transition probabilities.
Let $z\in\cE_K$.

$\bullet $ For
$0\leq i\leq K$ and
$0<z_0+\cdots+z_K<m\,,$
\begin{multline*}
p(z,z+w_i)\,=\,
\frac{m-\displaystyle \sum_{l=0}^K z_l}{m\big((\s-1)z_0+m\big)}\\
\times\Biggr( \s z_0 M^{K+1}_H(0,i)+
\sum_{l=1}^K z_l M^{K+1}_H(l,i)+
\bigg( m-\sum_{l=0}^K z_l \bigg)M^{K+1}_H({K+1},i) \Biggr)\,.
\end{multline*}
\par\noindent
$\bullet $ For
$0\leq i\leq K$ and
$1\leq z_i\,,$
\begin{multline*}
p(z,z-w_i)\,=\,
\frac{z_i}{m\big((\s-1)z_0+m\big)}\times
\Biggr( \s z_0 \biggr(1-\sum_{h=0}^K M^{K+1}_H(0,h)\biggr)\\
+\sum_{l=1}^K z_l \biggr(1-\sum_{h=0}^K M^{K+1}_H(l,h)\biggr)+
\bigg( m-\sum_{l=0}^K z_l \bigg)\biggr(1-\sum_{h=0}^K M^{K+1}_H({K+1},h)\biggr) \Biggr)\,.
\end{multline*}
\par\noindent
$\bullet $ For
$0\leq i,j\leq K$, $i\neq j$ and
$1\leq z_i\,,$
\begin{multline*}
p(z,z-w_i+w_j)\,=\,
\frac{z_i}{m\big((\s-1)z_0+m\big)}\\
\times\Biggr( \s z_0 M^{K+1}_H(0,j)+
\sum_{l=1}^K z_l M^{K+1}_H(l,j)+
\bigg( m-\sum_{l=0}^K z_l \bigg)M^{K+1}_H({K+1},j) \Biggr)\,.
\end{multline*}
The other non--diagonal coefficients of the matrix are null.
The diagonal coefficients are arranged so that the matrix is stochastic, i.e.,
the sum over each row equals 1.

Since we are interested in the dynamics of $\Otk$
in $\smash{\cW\setminus\cT^{K+1}}$,
the transition probabilities starting from a point in
$\lbrace\, z\in\cE_K :
z_0=0 \,\rbrace$
are not relevant.
Moreover, the law of the exit point from
$\lbrace\, z\in\cE_K :
z_0\geq 1 \,\rbrace$
is also not relevant,
what matters is the law of the exit time.
Therefore we will modify the matrix $p$
into another stochastic matrix $p^{K+1}$ 
such that:

$\bullet$ Starting from 
$\lbrace\, z\in\cE_K :
z_0=0 \,\rbrace$,
there is a jump with probability $1$
to $(1,m-1,0,\dots,0)$.

$\bullet$ The law of the exit time from
$\lbrace\, z\in\cE_K :
z_0\geq 1 \,\rbrace$
is unchanged,
but the exit point is
$(0,\dots,0)$
with probability 1.

More precisely, 
we define the matrix $p^{K+1}$
as follows:

$\bullet$ For $z,z'\in\cE_K$ with $z_0=0$,
\begin{align*}
p^{K+1}\big(z,(1,m-1,\dots,0)\big)\,&=\,1\,,\\
p^{K+1}(z,z')\,&=\,0\qquad
\text{if }\ z'\neq(1,m-1,\dots,0)\,.
\end{align*}
$\bullet $ For $z,z'\in\cE_K$ with
$z_0=1$ and $z'_0=0$,
\begin{align*}
p^{K+1}\big(z,(0,\dots,0)\big)\,&=\,
p(z,z-w_0)+\sum_{i=1}^K p(z,z-w_0+w_i)\,,\\
p^{K+1}(z,z')\,&=\,0\qquad \text{if }\ z'_0=0\ \text{ and }\ z'\neq(0,\dots,0)\,.
\end{align*}
Finally,
$p^{K+1}(z,z')=p(z,z')$
for all remaining
$z,z'\in\cE_K$.

Let $\Ztk$
be a Markov chain with state space $\cE_K$,
starting at the point $z^{K+1}=(1,m-1,0,\dots,0)$
and having for transition matrix $p^{K+1}$.
Since the process $\Otk$ always enters $\smash{\cW\setminus\cT^{K+1}}$ 
at $o^{K+1}_\text{enter}$
and its dynamics inside $\smash{\cW\setminus\cT^{K+1}}$
is the same as the dynamics of the chain $\Ztk$,
we will rely on this Markov chain to compute the desired estimates.

\subsection{Bounds on the invariant measure}\label{Bounds}
We denote by 
$\mol$, $\mo$, $\mok$
the invariant probability measures of
$\Otl$, $\Ot$, $\Otk$.
Let $\nu_K$
be the image measure of $\mo$ through the map
$$o\in\pml\,\lma\,
\frac{1}{m}\big(
o(0)+\cdots+o(K)
\big)\in
[0,1]\,.$$
For any function $f:[0,1]\lra\R$,
\begin{align*}
\int_{[0,1]}f\,d\nu_K
\,&=\,
\int_{\textstyle\pml}
f\bigg(\frac{
\ozk
}{m}
\bigg)\,d\mu_O(o)\\
\,&=\,
\lim_{t\to\infty} 
E\bigg(
f\bigg(
\frac{O_t(0)+\cdots+O_t(K)}{m}
\bigg)
\bigg)
\,.
\end{align*}
We fix a non--decreasing function
$f:[0,1]\to\R$ such that $f(0)=0$.
Thanks to
proposition~\ref{compol} we have the following inequality:
$$
\forall t\geq 0\qquad
f\bigg(
\frac{O_t^\ell(0)+\cdots+O_t^\ell(K)}{m}
\bigg)\,
\leq\,
f\bigg(
\frac{O_t(0)+\cdots+O_t(K)}{m}
\bigg)\,.$$
Moreover, thanks to proposition~\ref{compok},
$$
\forall t\geq 0\qquad
f\bigg(
\frac{O_t(0)+\cdots+O_t(K)}{m}
\bigg)
\,\leq\,
f\bigg(
\frac{O_t^{K+1}(0)+\cdots+O_t^{K+1}(K)}{m}
\bigg)
\,.$$
We take the expectations 
and we send $t$ to $\infty$, and we obtain
\begin{multline*}
\int_{\textstyle\pml}
f\bigg(
\frac{
\ozk
}{m}
\bigg)
\,d\mu_O^\ell(o)
\\
\leq\,
\int_{[0,1]}f\,d\nu_K
\,\leq
\\
\int_{\textstyle\pml}
f\bigg( \frac{ \ozk }{m} \bigg)
\,d\mu_O^{K+1}(o)
\,.
\end{multline*}
Our next goal is to find estimates of the above integrals.
The strategy is the same for the lower and upper integrals.
Let $\t$ be either $K+1$ or $\ell$
and let us study the invariant probability measure $\mu^\theta_O$.
We will rely on a renewal result.
Let $\xt$ be a discrete time Markov chain
taking values in a finite space $\cE$.
We suppose that $\xt$ is irreducible and aperiodic 
and we call $\mu$ its invariant probability measure.
\begin{proposition}\label{renewal}
Let $\cW$ be a subset of $\cE$
and let $e$ be a point in $\cE\setminus\cW$.
Let $f$ be a function from $\cE$ to $\R$. We define
$$\tau^* \,=\,\inf\,\big\{\,t\geq 0: 
X_t\in\cW
\,\big\}\,,\qquad
\tau \,=\,\inf\,\big\{\,t\geq \tau^*: 
X_t=e
\,\big\}\,.
$$
We have
$$
\int_\cE f(x)\,d\mu(x)\,=\,
\frac{1}{E(\tau\,|\,X_0=e)}\,
E\bigg(\int_0^{\tau}
f(X_s)\,ds\,\Big|\,X_0=e\bigg)
\,.$$
\end{proposition}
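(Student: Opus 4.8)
The plan is to realize the right–hand side as the normalized occupation measure of a single regeneration cycle, and then to identify that occupation measure with $\mu$ by uniqueness of the invariant probability. First note that since $X_0=e\notin\cW$ we have $\tau\geq\tau^*\geq 1$, and since $\cE$ is finite and $\Xt$ irreducible, $\tau^*$ (a hitting time of the nonempty set $\cW$) and, by the strong Markov property at $\tau^*$, $\tau-\tau^*$ (a hitting time of $e$ from a point of $\cW$) both have finite expectation, so $E(\tau\,|\,X_0=e)<\infty$. Write $E_e$ for $E(\,\cdot\,|\,X_0=e)$ and read $\int_0^\tau f(X_s)\,ds$ as $\sum_{s=0}^{\tau-1}f(X_s)$. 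Introduce the finite positive measure on $\cE$
$$\pi(y)\,=\,E_e\!\left(\sum_{s=0}^{\tau-1}1_{X_s=y}\right),\qquad y\in\cE,$$
so that $\pi(\cE)=E_e(\tau)$ and $\sum_{y}f(y)\pi(y)=E_e\big(\int_0^\tau f(X_s)\,ds\big)$. The whole statement reduces to proving $\pi=E_e(\tau)\,\mu$.

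The main computation is to check that $\pi$ is invariant for the transition matrix $P$ of $\Xt$. Since $\tau$ is a stopping time with respect to the natural filtration $(\cF_s)_{s\geq 0}$, the event $\{\tau>s\}$ lies in $\cF_s$, so conditioning on $\cF_s$ and using $E_e[1_{X_{s+1}=y}\,|\,\cF_s]=P(X_s,y)$ together with Tonelli gives
$$(\pi P)(y)\,=\,E_e\!\left(\sum_{s\geq 0}1_{\tau>s}\,P(X_s,y)\right)\,=\,\sum_{s\geq 0}E_e\big(1_{\tau>s}\,1_{X_{s+1}=y}\big)\,=\,E_e\!\left(\sum_{t=1}^{\tau}1_{X_t=y}\right).$$
Now $\sum_{t=1}^{\tau}1_{X_t=y}=\sum_{t=0}^{\tau-1}1_{X_t=y}-1_{X_0=y}+1_{X_\tau=y}$, and because $X_0=X_\tau=e$ the last two terms cancel, so $(\pi P)(y)=\pi(y)$ for every $y$.

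To conclude: $\Xt$ being irreducible on the finite set $\cE$ has a unique invariant probability, namely $\mu$; since $\pi$ is a nonzero invariant measure of finite total mass $E_e(\tau)\in(0,\infty)$, normalization forces $\pi=E_e(\tau)\,\mu$. Pairing with $f$,
$$E_e(\tau)\int_\cE f\,d\mu\,=\,\sum_{y}f(y)\pi(y)\,=\,E_e\!\left(\int_0^\tau f(X_s)\,ds\right),$$
and dividing by $E_e(\tau)$ yields the asserted identity. I expect the only point needing genuine care to be the justification of the interchange of expectation and summation in the invariance step — namely the measurability $\{\tau>s\}\in\cF_s$ and the application of the Markov property inside an infinite sum (which is clean once $E_e(\tau)<\infty$, or by first treating $f\geq 0$ and then splitting into positive and negative parts); everything else is bookkeeping. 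An alternative, slightly longer route is to iterate the construction to obtain regeneration times $0=\tau_0<\tau_1<\tau_2<\cdots$, apply the renewal–reward theorem to get $\tfrac1T\int_0^T f(X_s)\,ds\to E_e(\int_0^\tau f)/E_e(\tau)$ almost surely, and match this with the ergodic limit $\int_\cE f\,d\mu$ coming from positive recurrence; the direct argument above is preferable because it is self-contained.
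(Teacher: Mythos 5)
Your argument is correct. Note that the paper does not actually print a proof of Proposition \ref{renewal}: it declares it standard and refers to the proof of Proposition 8.2 of \cite{Cerf}, which proceeds in the classical renewal--reward spirit, cutting the trajectory at the successive regeneration times (return to $e$ after a visit to $\cW$) and identifying the time average $\frac1T\int_0^T f(X_s)\,ds$ both with $\int f\,d\mu$ (ergodic theorem) and with the ratio of expected cycle reward to expected cycle length --- essentially the ``alternative route'' you sketch at the end. Your main argument is a genuinely different, and in fact more self-contained, path: you show directly that the expected occupation measure $\pi$ of one cycle is invariant for the transition matrix (using that $\{\tau>s\}\in\cF_s$ and the Markov property, with the boundary terms cancelling because $X_0=X_\tau=e$), and then invoke uniqueness of the invariant probability of an irreducible finite chain to get $\pi=E_e(\tau)\,\mu$; this is a Kac-formula type argument that avoids any law of large numbers and does not even use aperiodicity. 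The only points worth making explicit are the ones you already flag or use implicitly: $\cW$ must be nonempty (as it is in the paper's application) so that $\tau^*$, hence $\tau$, is almost surely finite with finite expectation, and the cancellation $1_{X_\tau=y}=1_{X_0=y}$ is legitimate precisely because $\tau<\infty$ a.s.\ and $X_\tau=e$ by definition of $\tau$. With those remarks, your proof is complete and could replace the omitted one.
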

The proof is standard and similar to the proof 
of proposition 8.2 in \cite{Cerf}, so we omit it.
We apply this renewal result to the process
$(O^\theta_t)_{t\geq 0}$ restricted to
$\smash{\cN\cup\big(\cW\setminus\cT^\theta\big)}$,
the set $\cW\setminus\cT^\theta$, 
the occupancy distribution $\ote$
and the function
${o\longmapsto f\big(\big(\ozk\big)/m\big)}$.
Set
\begin{align*}
\index{$\tau^*$}
\tau^* \,&=\,\inf\,\big\{\,t\geq 0: 
O^\theta_t\in\cW
\,\big\}\,,\cr
\tau \,&=\,\inf\,\big\{\,t\geq \tau^*: 
O^\theta_t=\ote
\,\big\}\,.
\end{align*}
We then have
\begin{multline*}
\int_{\textstyle\pml}
f\bigg( \frac{\ozk }{m} \bigg)
\,d\mu_O^\theta(o)
\\=\,
\frac{
\displaystyle
E\bigg(\int_0^{\tau}
f\bigg( \frac{ 
O^\theta_s(0)+\cdots+O^\theta_s(K)
 }{m} \bigg)
\,ds\,\Big|\,
O^\theta_0=\ote
\bigg)
}{
\displaystyle
E\big(\tau\,|\,
O^\theta_0=\ote
\big)}\\=\,
\frac{
\displaystyle
E\bigg(\int_0^{\tau^*}
f\bigg( \frac{ 
O^\theta_s(0)+\cdots+O^\theta_s(K)
 }{m} \bigg)
\,ds\,\Big|\,
O^\theta_0=\ote
\bigg)}
{\displaystyle
E\big(\tau\,|\,
O^\theta_0=\ote
\big)}\\
+\frac{
\displaystyle
E\bigg(\int_{\tau^*}^{\tau}
f\bigg( \frac{ 
O^\theta_s(0)+\cdots+O^\theta_s(K)
 }{m} \bigg)
\,ds\,\Big|\,
O^\theta_0=\ote
\bigg)}
{\displaystyle
E\big(\tau\,|\,
O^\theta_0=\ote
\big)}\,
\,.
\end{multline*}
As long as
$(O^\theta_t)_{t\geq 0}$ is in
$\cW\setminus\cT^\theta$,
the dynamics of
$(O^\theta_t(0),\dots,O^\t_t(K))_{t\geq 0}$ 
is that of the Markov chain
$(Z^\theta_t)_{t\geq 0}$ 
defined at the end of the sections \ref{Dynalow} and \ref{Dynaup}.
Suppose that
$(Z^\theta_t)_{t\geq 0}$ 
starts from
$z^\t$,
where $z^\t$ is the point of $\cE_K$ given by
$$
z^\t\,=\,
\begin{cases}
\quad z^{K+1}\,=\,(1,0,\dots,0)\qquad&\text{if }\ \t=K+1\,,\\
\quad z^\ell\,=\,(1,m-1,0,\dots,0)\qquad&\text{if }\  \t=\ell\,.
\end{cases}$$
Let $\tau_0$ be the first time that the coordinate $0$ becomes null, i.e.,
$$\tau_0\,=\,\inf\,\big\{\,n\geq 0: Z^\theta_n(0)=0\,\big\}\,.
\index{$\tau_0$}$$
Since the process $\Ott$
always enters the set $\cW\setminus\cT^\theta$
at $o^\t_{\text{enter}}$,
the law of $\tau_0$ 
is the same as the law of $\tau-\tau^*$
whenever the process $\Ott$ starts from $\ote$.
We conclude that the laws of
$\big((O^\theta_t(0),\dots,O^\t_t(K)),\, {\tau^*}\leq t  \leq {\tau}\big)$
and
$\big(Z^\theta_t\,, 0\leq t\leq \tau_0\big)$
are the same. In particular,
$$
E\big(\tau-{\tau^*}
\,\big|\,
O^\theta_0=\ote
\big)
\,=\,
E\big({\tau_0}
\,\big|\,
Z^\t_0=z^\t
\big)\,,
$$
and also
\begin{multline*}
E\bigg(\int_{\tau^*}^{\tau}
f\bigg(\frac{O^\theta_s(0)+\cdots+O^\t_s(K)}{m}\bigg)\,ds\,\Big|\,
O^\theta_0=\ote
\bigg)
\,=\\
E\bigg(\int_{0}^{\tau_0}
f\bigg(\frac{Z^\theta_s(0)+\cdots+Z^\t_s(K)}{m}\bigg)\,ds
\,\Big|\, 
Z^\t_0=z^\t\bigg)\,.
\end{multline*}
The formula for the invariant measure
$\mu^\theta_O$ can then be written as follows:
\begin{multline*} \int_{\textstyle\pml}
f\bigg( \frac{ \ozk }{m} \bigg)
\,d\mu_O^\theta(o)
\,=\\
\frac{
\displaystyle
E\bigg(\int_0^{\tau^*}
f\bigg( \frac{ 
O^\theta_s(0)+\cdots+O^\theta_s(K)
 }{m} \bigg)
\,ds\,\Big|\,
O^\theta_0=\ote
\bigg)}
{\displaystyle
E\big(\tau^*\,|\,
O^\theta_0=\ote
\big)+E\big(
\tau_0\,|\,
Z^\t_0=z^\t
\big)}\\
+\frac{
\displaystyle
E\bigg(\int_0^{\tau_0}
f\bigg( \frac{ 
Z^\theta_s(0)+\cdots+Z^\theta_s(K)
 }{m} \bigg)
\,ds\,\Big|\,
Z^\theta_0=z^\t
\bigg)}
{\displaystyle
E\big(\tau^*\,|\,
O^\theta_0=\ote
\big)+E\big(
\tau_0\,|\,
Z^\t_0=z^\t
\big)}
\,
\,.
\end{multline*}
The process $\Ztt$ always enters the set
$\{z\in\cE_K:z_0\geq 1\}$ at
$z^\t$, and always exits the set
$\{z\in\cE_K:z_0\geq 1\}$ at $(0,\dots,0)$.
In order to write the previous formula in terms of the invariant probability measure of
$(Z^\theta_t)_{t\geq 0}$,
we apply the renewal result stated in proposition \ref{renewal}
to the process
$(Z^\theta_t)_{t\geq 0}$,
the set ${\{z\in\cE_K:z_0\geq 1\}}\,$, 
the point $0$ 
and the map $z\longmapsto f\big((z_0+\cdots+z_K)/m\big)$.
We set
\begin{align*}
\tau_1 \,&=\,\inf\,\big\{\,t\geq 0: 
Z^\theta_t(0)\geq 1
\,\big\}\,,\\
\tau'_0\,&=\,\inf\,\big\{\,t\geq \tau_1: 
Z^\theta_t= 0
\,\big\}\,,
\end{align*}
and we denote by $\nu^\theta$ the invariant probability measure of the process
$(Z^\theta_t)_{t\geq 0}\,.$
We have
\begin{multline*}
\int_{\textstyle\cE_K}
f\Big( \frac{ z_0+\cdots+z_K }{m} \Big)\,
d\nu^\theta(z)
\,=\\
\frac{
\displaystyle
E\bigg(\int_0^{\tau'_0}
f\bigg(\frac{Z^\theta_s(0)+\cdots+Z^\t_s(K)}{m}\bigg)\,ds\,\Big|\,
Z^\theta_0= 0\bigg)
}{
\displaystyle
E\big(\tau'_0\,|\,
Z^\theta_0= 0
\big)
}
\,.
\end{multline*}
Since $p^\t(0,z^\t)=1$, the Markov property yields
\begin{multline*}
\int_{\textstyle\cE_K}
f\Big( \frac{ z_0+\cdots+z_K }{m} \Big)\,
d\nu^\theta(z)
\,=\\
\frac{
\displaystyle
E\bigg(\int_0^{\tau_0}
f\bigg(\frac{Z^\theta_s(0)+\cdots+Z^\t_s(K)}{m}\bigg)\,ds
\,\Big|\,
Z^\t_0=z^\t \bigg)
}{
\displaystyle 1+
E\big(\tau_0\,|\,
Z^\t_0=z^\t\big)
}
\,.
\end{multline*}
Reporting back in the formula for
$\mu^\t_O$, we get:
\begin{multline*}
\int_{\textstyle\pml}
f\bigg( \frac{ \ozk }{m} \bigg)
\,d\mu_O^\theta(o)
\,=\,
\cr
\frac{
\displaystyle
E\bigg(\int_0^{\tau^*}
f\bigg( \frac{ 
O^\theta_s(0)+\cdots+O^\theta_s(K)
 }{m} \bigg)
\,ds\,\Big|\,
O^\theta_0=\ote
\bigg)
}{
\displaystyle
E\big(\tau^*\,|\,
O^\theta_0=\ote
\big)+
E\big(\tau_0\,|\,
Z^\t_0=z^\t\big)
}\\
+\frac{
\displaystyle
1
+
E\big(\tau_0\,|\,
Z^\t_0=z^\t\big)
}{
\displaystyle
E\big(\tau^*\,|\,
O^\theta_0=\ote
\big)+
E\big(\tau_0\,|\,
Z^\t_0=z^\t\big)
}
\int_{\textstyle\cE_K}
f\Big( \frac{ z_0+\cdots+z_K }{m} \Big)
d\nu^\theta(z)
\,.
\end{multline*}
In the sequel, 
we will try to estimate each of the terms appearing on the right--hand side of this formula.

\section{Induction and estimates}\label{Indest}
In this section we estimate the integral
$$\int_{\textstyle\cE_K}
f\Big(
\frac{z_0+\cdots+z_K}{m}
\Big)
d\nu^\t(z) \,.$$
Suppose that $\s\exa>1$
and let $\rho^*=(\rho^*_0,\dots,\rho^*_K)$ be the point of $[0,1]^{K+1}$ given by
$$
\rho^*_k\,=\,
(\s\exa-1)
\frac{a^k}{k!}
\sum_{i\geq 1}
\frac{i^k}{\s^i}\,,\qquad 
0\leq k\leq K\,.
$$
Let 
$f:\pml\lra\R$
be a non--decreasing continuous function with $f(0)=0$.
We consider the following asymptotic regime:
$$\displaylines{
\ell\to +\infty\,,\qquad m\to +\infty\,,\qquad q\to 0\,,\cr
{\ell q} \to a\,,
\qquad\frac{m}{\ell}\to\alpha\,.}$$
Our goal is to prove that in this regime
$$\lim_{
\genfrac{}{}{0pt}{1}{\ell,m\to\infty,\,
q\to 0
}
{{\ell q} \to a,\,
\frac{\scriptstyle m}{\scriptstyle \ell}\to\alpha
}
}\,
\int_{\textstyle\cE_K}
f\Big(
\frac{z_0+\cdots+z_K}{m} 
\Big)
d\nu^\t(z)\,=\,
f(\rho^*_0+\cdots+\rho^*_K)\,.
$$
In order to prove this convergence,
we estimate some hitting times associated to the Markov chain
$\Ztt$.
Let us define for 
$0\leq k\leq K$ and for
$\d>0$,
$$U_k(\d)\,=\,
\bigg\lbrace\,
z\in\cE_K : 
\Big| 
\frac{z_i}{m}-\rho^*_i
\Big|<\d,\,
0\leq i\leq k
\,\bigg\rbrace\,.$$
We also define, for any subset $A\subset\cE_K$
the hitting time of $A$:
$$\tau(A)\,=\,
\inf\lbrace\,
t\geq 0:
Z^\t_t\in A
\,\rbrace\,.$$
\begin{theorem}\label{polexp}
Let $\d>0$.
For all 
$0\leq k\leq K$,
there exist positive real numbers
$\a_k,\a'_k,\b_k,\b'_k$ 
(all of them depending on $\d$), such that for
$\ell,m$ big enough and for $q$ small enough, we have:

$\bullet$ For all $z\in\cE_K$,
$$P\big(\tau(U_k(\d))\geq m^{\a_k}
\,\big|\, 
Z^\t_0=z\big)\,\leq\,
\exp(-\a'_k m)\,.$$
$\bullet$ For all $z\in U_k(\d)$,
$$P\big(\tau(U_k(2\d)^c)\leq \exp(\b_k m)
\,\big|\, 
Z^\t_0=z\big)\,\leq\,
\exp(-\b'_k m)\,.$$
\end{theorem}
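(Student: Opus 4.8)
The plan is to prove Theorem \ref{polexp} by induction on $k$, following the strategy announced in the sketch of proof: both the base case $k=0$ and the inductive step reduce to estimates for birth-and-death chains, for which explicit formulas for hitting times are available. The key structural fact that makes this work is that, because $M_H(b,c)\to 0$ for $b>c$, the coupling has been built so that, for each $k$, the dynamics of $(Z^\t_t(0),\dots,Z^\t_t(k))$ does not depend on the coordinates $k+1,\dots,K$. Hence for the base case one looks only at the coordinate $Z^\t_t(0)$: up to the exit from $\{z_0\ge 1\}$ its increments depend only on $z_0$ (the transition probabilities written at the end of Sections \ref{Dynalow} and \ref{Dynaup} show $p(z,z+w_0)$ and $p(z,z-w_0)$ depend on $z$ only through $z_0$ once the other coordinates are summed out), so $(Z^\t_t(0))_{t\ge 0}$ is itself a birth-and-death chain on $\{0,\dots,m\}$.

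**Base case $k=0$.** First I would compute the drift of $Z^\t_t(0)/m$. Writing $z_0=xm$, the birth rate is proportional to $(m-z_0-\cdots)\cdot\s z_0 M_H(0,0)$-type terms and the death rate to $z_0\cdot(\cdots)$; in the asymptotic regime $M_H(0,0)\to\exa$ and the other relevant limits are as recorded in the excerpt, so the ratio $p(z,z-w_0)/p(z,z+w_0)$ converges, uniformly on $\{|x-\rho^*_0|\ge\d\}$, to a quantity which is $<1$ for $x<\rho^*_0$ and $>1$ for $x>\rho^*_0$, with $\rho^*_0=(\s\exa-1)/(\s-1)$ the unique stable zero of the drift. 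This is exactly the setting of the classical large-deviation estimates for birth-and-death chains: the chain is pushed towards $\rho^*_0 m$ with a drift bounded away from $0$ outside $U_0(\d)$, which gives (i) hitting $U_0(\d)$ in time polynomial in $m$ except on an event of probability $\exp(-\a'_0 m)$, and (ii) once in $U_0(\d)$, the probability of reaching $U_0(2\d)^c$ before time $\exp(\b_0 m)$ is at most $\exp(-\b'_0 m)$, by the standard gambler's-ruin / Freidlin--Wentzell computation using the explicit product formula $\prod p(\text{down})/p(\text{up})$ for the stationary measure and escape probabilities. I would also need to handle the modification at $z_0=0$ (the forced jump to $z^\t$), but since $z^\t$ has $z_0=1$ this only adds at most one step and does not affect the estimates.

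**Inductive step.** Assume the estimates hold for $k-1$. By the decoupling, once the first $k-1$ coordinates have entered and stay inside $U_{k-1}(2\d)$ (which, by the inductive hypothesis, happens within polynomial time and persists for time $\exp(\b_{k-1}m)$, both with overwhelming probability), the coordinate $Z^\t_t(k)/m$ behaves like a birth-and-death chain whose birth and death rates are, up to errors $O(\d)$ and $o(1)$, the ones obtained by substituting $z_i\approx\rho^*_i m$ for $i<k$. Computing the resulting drift and setting it to zero yields precisely the recurrence relation
\begin{align*}
\rho^*_k\,&=\,\frac{\exa}{(\s-1)\rho^*_0+1-\exa}\Big(\s\frac{a^k}{k!}\rho^*_0+\sum_{l=1}^{k-1}\frac{a^{k-l}}{(k-l)!}\rho^*_l\Big)
\end{align*}
derived in Section 2.2, so $\rho^*_k$ is again the unique stable equilibrium and the drift is bounded away from $0$ on $\{|z_k/m-\rho^*_k|\ge\d/2\}$ provided the earlier coordinates stay within $U_{k-1}$ of their targets (shrinking $\d$ at each level of the induction so that the accumulated perturbations remain controlled). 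Then the same birth-and-death arguments as in the base case, run on the coordinate $k$ during the (exponentially long) window where coordinates $<k$ are frozen near $\rho^*$, give the polynomial hitting bound and the exponential persistence bound for $U_k(\d)$; one intersects with the good events from level $k-1$, whose complement has probability $\le\exp(-\b'_{k-1}m)$, and adjusts the constants $\a_k,\a'_k,\b_k,\b'_k$ accordingly, taking $\b_k<\b_{k-1}$ so the level-$k$ excursion fits inside the level-$(k-1)$ window.

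**Main obstacle.** The delicate point is the bookkeeping in the inductive step: one must run the coordinate-$k$ birth-and-death estimate \emph{conditionally on} the lower coordinates staying in a neighbourhood of $\rho^*$, and the drift of coordinate $k$ depends on those coordinates, so the effective birth-and-death chain is only approximately time-homogeneous. The fix is to bound it above and below by genuine birth-and-death chains with slightly perturbed (but still "confining") drifts — valid on the good event — and to choose the hierarchy of scales $\d\gg\d/2\gg\cdots$ and the exponential/polynomial exponents at each level so that the error terms never overwhelm the confining drift; making these quantifier dependencies ($\d$ first, then $\ell,m$ large and $q$ small) explicit and consistent across all $k\le K$ is where the real work lies, though each individual estimate is a routine birth-and-death computation.
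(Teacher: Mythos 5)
Your proposal is correct and follows essentially the same route as the paper: induction on $k$, with the base case reduced to a birth--and--death analysis of the single coordinate $Z^\t_t(0)$, and the inductive step handled by sandwiching the $k$-th coordinate between genuine time-homogeneous birth--and--death chains whose rates are extremized over a neighbourhood of $(\rho^*_0,\dots,\rho^*_{k-1})$, valid while the lower coordinates remain in $U_{k-1}(2\d')$ --- exactly the paper's construction of $(Z^L_t)$, $(Z^U_t)$ and propositions \ref{domiZ} and \ref{polexpbdc}. Your identification of the stable equilibrium with the recurrence for $\rho^*_k$ and the bookkeeping of exponents (polynomial hitting inside an exponentially long confinement window, $\b_k<\b_{k-1}$) also match the paper's argument, so there is nothing essential to add.
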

We will prove this theorem by induction on $k$.
The strategy is as follows. From the definition of $U_k(\d)$ we see that
$$U_0(\d)\,\supset\,
U_1(\d)\,\supset\,
\cdots\,\supset\,
U_K(\d)\,,$$
and that in order to know whether the process
$\Ztt$ is in $U_k(\d)$,
it is enough to check the first $k+1$ coordinates,
$Z^\t_t(0),\dots,Z^\t_t(k)$.
The process $\Ztt$
has been built in such a way that the dynamics of
$\ztk$
does not depend on the coordinates $k+1,\dots,K$.
The case $k=0$ 
boils down to the study of birth and death Markov chains,
which are very similar to the ones studied in section 9 of \cite{Cerf}.
If the estimates hold at rank $k-1$,
then we know that the process 
$\Ztt$
spends almost all of its time inside the set
$U_{k-1}(\d)$.
As long as the process is in $U_{k-1}(\d)$,
we can bound stochastically the dynamics of 
$\ztk$ with a pair of birth and death chains,
which can be studied with the same techniques
as the birth and death chains from the case $k=0$. 
To begin with,
we show that the process 
$\ztk$
does not depend on the coordinates 
$k+1,\dots,K$.
Let us take $k$ in $\{0,\dots,K\}$, 
$z=(z_0,\dots,z_K)$ a point in $\cE_K$, 
and let us compute the following transition probabilities:
\begin{align*}
P\big(
Z^\t_{t+1}(k)=z_k+1
\,|\,
Z^\t_t=z
\big)\,&=\,
p^\t(z,z+w_k)+
\sum_{\genfrac{}{}{0pt}{1}{0\leq i\leq K}
{i\neq k}} p^\t(z,z+w_k-w_i)\,,\\
P\big(
Z^\t_{t+1}(k)=z_k-1
\,|\,
Z^\t_t=z
\big)\,&=\,
p^\t(z,z-w_k)+
\sum_{\genfrac{}{}{0pt}{1}{0\leq i\leq K}
{i\neq k}} p^\t(z,z-w_k+w_i)\,.
\end{align*}
In the case of the lower process $\Ztl$, we obtain
\begin{multline*}
p^\ell(z,z+w_k)+
\sum_{\genfrac{}{}{0pt}{1}{0\leq i\leq K}
{i\neq k}} p^\ell(z,z+w_k-w_i)\,=\\
\frac{m-z_k}{m\big((\s-1)z_0+m\big)}\times
\Bigg(
\s z_0 M_H(0,k)+
\sum_{l=1}^{k} z_l M_H(l,k)
\Bigg)\,,
\end{multline*}
\begin{multline*}
p^\ell(z,z-w_k)+
\sum_{\genfrac{}{}{0pt}{1}{0\leq i\leq K}
{i\neq k}} p^\ell(z,z-w_k+w_i)\,=\,
\frac{z_k}{m\big((\s-1)z_0+m\big)}\\
\times\Bigg(
\s z_0 \big(1-M_H(0,k)\big)+
\sum_{l=1}^{k} z_l \big(1-M_H(l,k)\big)
+m-\sum_{l=0}^k z_l
\Bigg)\,.
\end{multline*}
In the case of the upper process $\Ztk$, we obtain
\begin{multline*}
p^{K+1}(z,z+w_k)+
\sum_{\genfrac{}{}{0pt}{1}{0\leq i\leq K}
{i\neq k}} p^{K+1}(z,z+w_k-w_i)\,=\,
\frac{m-z_k}{m\big((\s-1)z_0+m\big)}\\
\times\Bigg(
\s z_0 M_H(0,k)+
\sum_{l=1}^{k} z_l M_H(l,k)+
\bigg( m-\sum_{l=0}^k z_l \bigg)M_H(k+1,k)
\Bigg)\,,
\end{multline*}
\begin{multline*}
p^{K+1}(z,z-w_k)+
\sum_{\genfrac{}{}{0pt}{1}{0\leq i\leq K}
{i\neq k}} p^{K+1}(z,z-w_k+w_i)\,=\\
\frac{z_k}{m\big((\s-1)z_0+m\big)}
\Bigg(
\s z_0 \big(1-M_H(0,k)\big)+
\sum_{l=1}^{k} z_l \big(1-M_H(l,k)\big)\\+
\bigg( m-\sum_{l=0}^k z_l \bigg) \big(1-M_H(k+1,k)\big)
\Bigg)\,.
\end{multline*}
We observe that none of the above expressions depend on
$z_{k+1},\dots,z_K$.
Moreover, if we define for
$k\in\{0,\dots,K\}$,
$$M_H^\ell(\ell,k)\,=\,0\,,\qquad
M_H^{K+1}(K+1,k)\,=\,
M_H(k+1,k)\,,$$
we can rewrite the above transition probabilities as follows:
\begin{multline*}
p^\t(z,z+w_k)+
\sum_{\genfrac{}{}{0pt}{1}{0\leq i\leq K}
{i\neq k}} p^\t(z,z+w_k-w_i)\,=\,
\frac{m-z_k}{m\big((\s-1)z_0+m\big)}\\
\times\Bigg(
\s z_0 M_H(0,k)+
\sum_{l=1}^{k} z_l M_H(l,k)+
\bigg( m-\sum_{l=0}^k z_l \bigg)M_H^\t(\t,k)
\Bigg)\,,
\end{multline*}
\begin{multline*}
p^\t(z,z-w_k)+
\sum_{\genfrac{}{}{0pt}{1}{0\leq i\leq K}
{i\neq k}} p^\t(z,z-w_k+w_i)\,=\\
\frac{z_k}{m\big((\s-1)z_0+m\big)}
\Bigg(
\s z_0 \big(1-M_H(0,k)\big)+
\sum_{l=1}^{k} z_l \big(1-M_H(l,k)\big)\\+
\bigg( m-\sum_{l=0}^k z_l \bigg) \big(1-M^\t_H(\t,k)\big)
\Bigg)\,.
\end{multline*}
We first give some general results concerning birth and death Markov chains,
which we then use to study both the initial case $k=0$
and the inductive step.

\subsection{Birth and death Markov chains}\label{Bdchains}
First of all we give some explicit formulas for mean hitting times
of birth and death Markov chains.
These formulas can be derived from the classical formulas for finite state space Markov chains (see \cite{FW}, chapter 6).
Next, we apply these formulas to study the limiting behaviour of a family
of birth and death Markov chains,
under suitable hypotheses.
Let $(Z_t)_{t\geq 0}$ be a birth and death Markov chain
taking values in $\zm$
and having the following transition probabilities
$$
\begin{array}{ll}
P\big( Z_{t+1}=i+1 \,\big|\, Z_t=i \big)\,=\,\d_i\,,
\quad & \quad 0\leq i<m\,,\\
P\big( Z_{t+1}=i-1 \,\big|\, Z_t=i \big)\,=\,\g_i\,,
\quad & \quad 0<i\leq m\,.
\end{array}
$$
Let us define
$$
\pi(0)=1\,,\qquad
\pi(i)\,=\,
\frac{{\delta_1\cdots\delta_{i}}}{{\gamma_{1}\cdots\gamma_{i}}}
\,,\qquad
0<i<m\,.
\index{$\pi(i)$}
$$
For a subset $A\subset\zm$,
we define the hitting time of $A$ by
$$\tau(A)\,=\,
\inf\big\lbrace
t\geq 0 :
Z_t\in A
\big\rbrace\,.$$
%Let $a$ be a point in $\{1,\dots,m\}$.
%We have the following formula for the hitting time of $a$ starting from $0$:
%$$
%E\big(
%\tau(\{a\})
%\,\big|\,
%Z_0=0
%\big)\,=\,
%\sum_{i=0}^{a-1}
%\sum_{j=i}^{a-1}
%\frac{1}{\d_i}
%\frac{\pi(i)}{\pi(j)}\,.
%$$
%Let $a$ be a point in $\{\,0,\dots,m-1\,\}$.
%We have the following formula for the hitting time of $a$ starting from $m$:
%$$
%E\big(
%\tau(\{a\})
%\,\big|\,
%Z_0=m
%\big)\,=\,
%\sum_{i=a}^{m-1}
%\sum_{j=a}^{i}
%\frac{1}{\g_{i+1}}
%\frac{\pi(i)}{\pi(j)}\,.
%$$
Let $a,b$ be two points in $\zm$, with $a<b$.l 
We have the following formula for the hitting time of $b$ starting from $a$:
$$
E\big(
\tau(\{b\})
\,\big|\,
Z_0=a
\big)\,=\,
\sum_{i=a}^{b-1}
\sum_{j=i}^{b-1}
\frac{1}{\d_i}
\frac{\pi(i)}{\pi(j)}\,.
$$
Likewise, we have the following formula for the hitting time of $a$ starting from $b$:
$$
E\big(
\tau(\{a\})
\,\big|\,
Z_0=b
\big)\,=\,
\sum_{i=a}^{b-1}
\sum_{j=i}^{b-1}
\frac{1}{\g_{i+1}}
\frac{\pi(i)}{\pi(j)}\,.
$$
Let $a<i<b$ be three points in $\zm$.
We have the following formulas for the exit point of $\{\,a,\dots,b\,\}$:
\begin{align*}
P\big(
Z_{\tau(\{a,b\})}=a
\,\big|\,
Z_0=i
\big)\,&=\,
\frac{\displaystyle
\sum_{j=i}^{b-1}\frac{1}{\pi(j)}}
{\displaystyle
\sum_{j=a}^{b-1}\frac{1}{\pi(j)}}\,,\\
P\big(
Z_{\tau(\{a,b\})}=b
\,\big|\,
Z_0=i
\big)\,&=\,
\frac{\displaystyle
\sum_{j=a}^{i-1}\frac{1}{\pi(j)}}
{\displaystyle
\sum_{j=a}^{b-1}\frac{1}{\pi(j)}}\,.
\end{align*}
We consider now a family of birth and death Markov chains
$\Zt$
depending on four parameters:
$m,\ell\geq 1$, $q\in[0,1]$, $\d'>0$.
%$$
%\big(
%(Z_t^{m,\ell,q})_{t\geq 0},\ m,\ell\geq 1,\ q\in [0,1]
%\big)\,.
%$$
For $m,\ell\geq0$, $q\in[0,1]$ and $\d'>0$,
the process
$\Zt$
is a birth and death Markov chain with state space $\zm$
and transition probabilities given by:
\begin{align*}
P\big(
Z_{t+1}=i+1
\,|\,
Z_t=i
\big)\,&=\,
\d_i\,,\qquad
0\leq i<m\,,\\
P\big(
Z_{t+1}=i-1
\,|\,
Z_t=i
\big)\,&=\,
\g_i\,,\qquad
0<i\leq m\,,
\end{align*}
where $\d_i,\g_i$ can depend on the parameters $m,\ell,q,\d'$.
%For $m,\ell\geq 1$, $q\in[0,1]$ and $\d'>0$ 
%we define
%\begin{align*}
%\pi(0,m,\ell,q)\,&=\,1\,,\\
%\pi(i,m,\ell,q)\,&=\,
%\frac{\d_1(m,\ell,q)\cdots\d_i(m,\ell,q)}
%{\g_1(m,\ell,q)\cdots\g_i(m,\ell,q)}\,,\quad 1\leq i\leq m-1\,.
%\end{align*}
We are interested in the asymptotic behaviour
of $\Zt$
when $m,\ell$ go to $\infty$ and $q,\d'$ go to $0$.
We make the following assumptions.

\textbf{Assumption 1.}
There exist a constant $C>0$ (which can depend on $\ell,q,\d'$ but not on $m$) and an integer $k\geq 1$ such that
for all $m,\ell$ large enough and $q,\d'$ small enough
\begin{align*}
\forall i\in\lbrace\,0,\dots,m-1\,\rbrace\qquad
&\d_i\geq \frac{C}{m^k}\,,\\
\forall i\in\lbrace\,1,\dots,m\,\rbrace\qquad
&\g_i\geq \frac{C}{m^k}\,.
\end{align*}

\textbf{Assumption 2.} 
For all $\rho\in[0,1]$, 
we have the following large deviation estimate:
$$\lim_{\genfrac{}{}{0pt}{1}{\ell,m\to\infty}
{q\to 0}
}\,
\frac{1}{m}\ln\pi(\lfloor \rho m\rfloor)\,=\,
f_{\d'}(\rho)\,,$$
where $f_{\d'}:[0,1]\lra \R$ is a function for which
there exists $\rho_1^*\leq \rho^*\leq \rho_2^*$ in $[0,1]$ such that 
$f_{\d'}$ is increasing over $[0,\rho_1^*[\,$ 
and decreasing over $\,]\rho_2^*,1]$.
Moreover, $\rho^*$ does not depend on $\d'$ and
$$\lim_{\d'\to 0}\rho_1^*\,=\,
\lim_{\d'\to 0}\rho_2^*\,=\,
\rho^*\,.$$

\textbf{Assumption 3.}
For each $m,\ell\geq 1$, $q\in[0,1]$, $\d'>0$,
there exists $r_1\leq r_2$ in $[0,1]$ such that
\begin{align*}
1\leq i\leq j\leq\lfloor r_1 m\rfloor\qquad
&\Longrightarrow\qquad \pi(i)\,\leq \pi(j)\,,\cr
\lfloor r_2 m\rfloor
\leq i\leq j\leq m\qquad
&\Longrightarrow\qquad \pi(i)\,\geq \pi(j)\,.
\end{align*}
Moreover,
$$
\lim_{\genfrac{}{}{0pt}{1}{\ell,m\to\infty}
{q\to 0}
}\,
r_1\,=\,\rho_1^*\,,\qquad
\lim_{\genfrac{}{}{0pt}{1}{\ell,m\to\infty}
{q\to 0}
}\,
r_2\,=\,\rho_2^*\,.
$$

These three assumptions will be verified by the birth and death Markov chains
that we will derive later from the process $\Ztt$.
With these assumptions in hand we can prove the following proposition.
Let us define, for $\d>0$,
$$V(\d)\,=\,
\Big\lbrace\,
i\in\zm:
\Big|\frac{i}{m}-\rho^*\Big|<\d
\,\Big\rbrace\,.$$
For any subset $A\subset \zm$ we define 
the hitting time of $A$ by
$$\tau(A)\,=\,
\inf\lbrace\,
t\geq 0: Z_t\in A
\,\rbrace\,.$$

\begin{proposition}\label{polexpbd}
Suppose that the three assumptions are satisfied.
Let $\d>0$. 
There exist positive real numbers 
$\a,\a',\b,\b'$
(depending on $\d,\d'$)
such that for $m,\ell$ large enough and $q,\d'$ small enough
\begin{align*}
\forall i\in\zm\qquad
&P\big(\tau(V(\d))\geq m^\a
\,|\,
Z_0=i
\big)\,\leq\,
\exp(-\a'm)\,,\\
\forall i\in V(\d)\qquad
&P\big(
\tau(V(2\d)^c)\leq\exp(\b m)
\,|\,
Z_0=i
\big)\,\leq\,
\exp(-\b'm)\,.
\end{align*}
\end{proposition}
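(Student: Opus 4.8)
The plan is to derive both estimates from the explicit hitting-time and exit-point formulas for birth and death chains stated at the beginning of Section~\ref{Bdchains}, feeding them with the large deviation input of Assumption~2 and the monotonicity input of Assumption~3, while Assumption~1 guarantees the one-step probabilities are not too small. Throughout, $\rho^*$, $\rho_1^*$, $\rho_2^*$, $r_1$, $r_2$ and $f_{\d'}$ are as in the assumptions, and I fix $\d>0$ small enough that $\rho_1^*,\rho_2^*$ and $r_1,r_2$ all lie within distance $\d/2$ of $\rho^*$ once $\d'$ is small and $\ell,m$ are large; this is possible by the limits in Assumptions~2 and~3.

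\textbf{Upper bound on the time to reach $V(\d)$.}
First I would show that, from any starting point $i\in\zm$, the chain reaches $V(\d)$ within a polynomial number of steps except on an event of probability $\exp(-\a'm)$. The idea is that $V(\d)$ contains an interval around $\lfloor\rho^* m\rfloor$, and by Assumption~3 the function $\pi$ is non-decreasing up to $r_1 m$ and non-increasing from $r_2 m$ on, with $r_1,r_2$ close to $\rho^*$. So from a point $i<r_1 m$ the exit-point formula for $\{\,0,\dots,b\,\}$ with $b$ the left endpoint of $V(\d)$ gives that the chain hits $b$ before $0$ with probability bounded below; indeed the ratio $\big(\sum_{j=i}^{b-1}1/\pi(j)\big)\big/\big(\sum_{j=0}^{b-1}1/\pi(j)\big)$ is bounded away from $1$ uniformly, because on $[0,r_1m]$ the terms $1/\pi(j)$ are non-increasing so the numerator is a definite fraction of the denominator. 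A symmetric argument using the non-increasing part handles starting points $i>r_2 m$. Iterating this, the number of excursions needed to enter $V(\d)$ is stochastically dominated by a geometric variable, so the probability of needing more than $Cm$ excursions is $\exp(-\a' m)$; and each excursion lasts at most a polynomial time in expectation (hence, after a further geometric truncation, with overwhelming probability) by the mean hitting-time formulas $E(\tau(\{b\})\mid Z_0=a)=\sum_{i=a}^{b-1}\sum_{j=i}^{b-1}\frac1{\d_i}\frac{\pi(i)}{\pi(j)}$ combined with Assumption~1, which bounds $1/\d_i$ by $m^k/C$, and with the crude bound $\pi(i)/\pi(j)\leq \exp(C'm)$ coming from Assumption~2 — wait, that last bound is too weak, so instead I would run the excursion analysis entirely through the exit-point formulas and bound the time of a single excursion that does reach $V(\d)$ by comparison with the worst-case full-state-space hitting time, which is at most $m^2\cdot m^k/C$ by the double-sum formula once one notes that inside a single successful excursion $\pi$ stays on a monotone stretch so $\pi(i)/\pi(j)\leq 1$. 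Taking $\a>k+2$ then gives $P(\tau(V(\d))\geq m^\a\mid Z_0=i)\leq\exp(-\a'm)$.

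\textbf{Lower bound on the time to leave $V(2\d)$.}
Here I would show that, starting inside $V(\d)$, the chain stays in $V(2\d)$ for an exponentially long time. Let $a_-$ and $b_+$ be the endpoints of $V(2\d)$ and let $a$, $b$ be the endpoints of $V(\d)$, so $a_-<a<\lfloor\rho^*m\rfloor<b<b_+$. To escape through the right end, the chain must cross from $b$ to $b_+$; by the exit-point formula, starting from $b$ the probability of reaching $b_+$ before returning to, say, $\lfloor\rho^*m\rfloor$ is $\big(\sum_{j=\rho^*m}^{b-1}1/\pi(j)\big)\big/\big(\sum_{j=\rho^*m}^{b_+-1}1/\pi(j)\big)$. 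By Assumption~2, $\frac1m\ln\pi(\lfloor\rho m\rfloor)\to f_{\d'}(\rho)$, and since $f_{\d'}$ is decreasing on $\,]\rho_2^*,1]$ with $\rho_2^*$ within $\d$ of $\rho^*$, the values $\pi(j)$ for $j\in[b,b_+)$ are exponentially smaller than the values $\pi(j)$ for $j$ near the right part of $[\rho^*m,b)$; hence $1/\pi(j)$ is exponentially larger on $[b,b_+)$ than near $b$, so the crossing probability is at most $\exp(-cm)$ for some $c>0$ depending on $\d,\d'$. A symmetric argument with the increasing part of $f_{\d'}$ on $[0,\rho_1^*[$ bounds the probability of escaping through the left end of $V(2\d)$. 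So on each return to the central region the chain escapes $V(2\d)$ with probability at most $\exp(-cm)$; since returns happen at most once per unit time, the number of returns before escape stochastically dominates a geometric variable with success probability $\exp(-cm)$, whence $P(\tau(V(2\d)^c)\leq\exp(\b m)\mid Z_0=i)\leq\exp(\b m)\exp(-cm)\leq\exp(-\b' m)$ as soon as $\b<c$. This uses that from any point of $V(\d)$ the chain returns to a fixed central state in $V(\d)$ before escaping $V(2\d)$ with high probability, which again follows from the exit-point formulas and Assumption~3 applied on the monotone stretches inside $V(2\d)$.

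\textbf{Main obstacle.}
The delicate point is making the large deviation estimate of Assumption~2 — which is only stated for $\pi(\lfloor\rho m\rfloor)$ at fixed $\rho$ and in the limit — yield the \emph{uniform} comparisons of sums $\sum 1/\pi(j)$ over windows of width $\sim\d m$ that both steps require, with the exponential rate in the second step genuinely positive and independent of how the windows shrink as $\d'\to0$. The resolution is to first choose $\d$ in terms of $\rho_1^*,\rho_2^*$, then let $\ell,m\to\infty$ and $q,\d'\to0$ so that $f_{\d'}$ is monotone on the relevant ranges and the monotonicity thresholds $r_1,r_2$ of Assumption~3 have settled near $\rho^*$; the convexity-type monotonicity of $f_{\d'}$ lets one bound a sum over a window by its largest term times the window length up to a sub-exponential factor, and the gap between $f_{\d'}$ at the window edges furnishes the exponential rate $c=c(\d,\d')>0$. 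One must also keep track of the fact that $\pi$ is only defined for $0<i<m$ in the statement, so the endpoints $0$ and $m$ have to be treated as reflecting-type boundary states directly from the transition probabilities $\d_i,\g_i$; Assumption~1 ensures the chain does not get stuck there. Once these uniformities are in place, the two displayed inequalities follow by choosing $\a>k+2$, $\a'$ below the geometric rate of the excursion count, $\b<c$, and $\b'=c-\b$.
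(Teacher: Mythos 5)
Your first estimate is essentially the paper's own argument: starting to the left of $V(\delta)$ the chain enters $V(\delta)$ exactly when it hits its left endpoint $b$, the double-sum formula for $E(\tau(\{b\})\,|\,Z_0=i)$ only involves indices $j\leq k\leq b-1$ lying in the stretch where Assumption 3 makes $\pi$ non-decreasing, so $\pi(j)/\pi(k)\leq 1$; Assumption 1 bounds $1/\delta_j$ by $m^k/C$, and Markov's inequality iterated over $m$ successive blocks of length $m^{a}$ turns the polynomial mean into the bound $\exp(-\alpha' m)$. The excursions between $0$ and $b$ that you introduce are unnecessary (hitting $0$ costs nothing, it is not absorbing), and your claim that the chain reaches $b$ before $0$ with probability bounded away from $0$ uniformly in the starting point is not justified (with $1/\pi$ non-increasing on $[0,r_1m]$ the exit-point ratio can be as small as order $1/m$); but since you ultimately fall back on the double-sum bound, this part is only noise.

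The second estimate has a genuine gap, and it is precisely the one you flag. By anchoring each escape attempt at the centre $\lfloor\rho^* m\rfloor$, your single-attempt probability is the ratio $\big(\sum_{j=\lfloor\rho^*m\rfloor}^{b-1}1/\pi(j)\big)\big/\big(\sum_{j=\lfloor\rho^*m\rfloor}^{b_+-1}1/\pi(j)\big)$, and to make it exponentially small you must bound the numerator from above, i.e.\ control $\min_j\pi(j)$ over a window that contains the interval between $r_1$ and $r_2$ (equivalently between $\rho_1^*$ and $\rho_2^*$), where neither Assumption 3 (monotonicity of $\pi$) nor Assumption 2 (monotonicity of $f_{\delta'}$) says anything; moreover Assumption 2 is a pointwise limit at fixed $\rho$ and does not by itself give the uniform-over-the-window control you need, and the ``convexity-type monotonicity'' you invoke to bound a sum by its largest term is not among the hypotheses. (Also, your assertion that $\pi(j)$ for $j\in[b,b_+)$ is exponentially smaller than $\pi(j)$ for $j$ near $b^-$ fails for $j$ just above $b$, where the two are comparable.) The paper's proof removes the difficulty by a different decomposition: it conditions on the \emph{last} visit to $V(\delta)$ before the exit, so each attempt starts at $b-1$ (respectively $c+1$) just outside $V(\delta)$ and must reach the boundary of $V(2\delta)$ \emph{before returning to} $V(\delta)$; the resulting exit-point formula only involves indices in the annulus $V(2\delta)\setminus V(\delta)$, and keeping a single term of the denominator gives the bound $\pi(b'-1)/\pi(b-1)$, which Assumption 2 evaluated at the two points $\rho^*-2\delta$ and $\rho^*-\delta$ (both in the monotone range of $f_{\delta'}$ once $\delta'$ is small) makes exponentially small, with no sums over windows at all. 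If you replace your return barrier $\lfloor\rho^*m\rfloor$ by the boundary of $V(\delta)$ and retain only one term of the denominator sum, your argument closes; as written, the exponential rate $c(\delta,\delta')$ for a single attempt is not established from Assumptions 1--3.
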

\begin{proof}
We begin by showing the first statement in the proposition.
%Thanks to the monotonicity of the model,
%it is enough to prove the result for the initial conditions
%$i=0$ and $i=m$.
The cases $i\leq \rho^*m$ and $i\geq\rho^*m$ are dealt with in a similar way,
thus, we will only show the result for $i\leq \rho^*m$.
%Since both cases can be handled in a similar way, 
%we will only deal with the case $i=0$.
Let us call $b$ the minimum of the discrete interval $V(\d)$,
$$b\,=\,
\lfloor(\rho^*-\d)m\rfloor+1\,.$$
From the formulas provided at the beginning of the section we obtain
$$
E\big(
\tau(V(\d))
\,\big|\,
Z_0=i
\big)\,=\,
E\big(
\tau(\{b\})
\,\big|\,
Z_0=i
\big)\,=\,
\sum_{j=i}^{b-1}
\sum_{k=j}^{b-1}
\frac{1}{\d_j}
\frac{\pi(j)}{\pi(k)}\,.
$$
By assumptions 2 and 3, for $m,\ell$ big enough and $q,\d'$ small enough,
the point 
$r_1$
is in $V(\d)$,
so that $b\leq r_1$, thus
$$1\leq j\leq k\leq b-1\,\Lra\,
\frac{\pi(j)}{\pi(k)}\,\leq\,1\,.$$
Furthermore, by assumption 1, for $m,\ell$ large enough 
and $q,\d'$ small enough,
$$\forall j \in\{\, 0,\dots,m-1 \,\}\qquad
\d_j\,\geq\,
\frac{C}{m^k}\,.$$
It follows that
$$E\big(
\tau(V(\d))
\,\big|\,
Z_0=0
\big)\,\leq\,
\frac{1}{C}m^{k+2}\,.$$
Let $a>k+2$. By the Markov inequality,
$$P\big(
\tau(V(\d))>m^a
\,\big|\,
Z_0=i
\big)\,\leq\,
\frac{1}{C}
m^{-(a-k-2)}\,.$$
We will next estimate for $n\geq 1$, 
$$
P\big(
\tau(V(\d))\geq nm^{a}
\,\big|\,
Z_0=i
\big)\,.
$$
We decompose this probability according to the possible states of the process at time
${(n-1)m^a}$:
\begin{multline*}
P\big(
\tau(V(\d))\geq nm^{a}
\,\big|\,
Z_0=i
\big)\,=\\
\sum_{j<b}
P\big(
Z_{(n-1)m^a}=j,
\tau(V(\d))\geq(n-1)m^a,
\tau(V(\d))\geq nm^{a}
\,\big|\,
Z_0=i
\big)\\
=\,\sum_{j<b}
P\big(
Z_{(n-1)m^a}=j,
\tau(V(\d))\geq(n-1)m^{a}
\,\big|\,
Z_0=i
\big)\\
\times
P\big(
\tau(V(\d))\geq nm^{a}
\,\big|\,
Z_0=i,
Z_{(n-1)m^a}=j,
\tau(V(\d))\geq (n-1)m^{a}
\big)\,.
\end{multline*}
Thanks to the Markov property,
\begin{multline*}
P\big(
\tau(V(\d))\geq nm^{a}
\,\big|\,
Z_0=i,
Z_{(n-1)m^a}=j,
\tau(V(\d))\geq(n-1)m^{a}
\big)\\
=\,P\big(
\tau(V(\d))\geq nm^{a}-(n-1)m^a
\,\big|\,
Z_0=j
\big)\,
\leq\,
%P\big(
%\tau(V(\d))\geq m^a
%\,\big|\,
%Z_0=0
%\big)
%\,\leq\,
\frac{1}{C}
m^{-(a-k-2)}\,.
\end{multline*}
Therefore, for all $n\geq 1$,
\begin{multline*}
P\big(
\tau(V(\d))\geq nm^{a}
\,\big|\,
Z_0=i
\big)\,\leq\\
\frac{1}{C}
m^{-(a-k-2)}
P\big(
\tau(V(\d))\geq 
(n-1)m^a
\,\big|\,
Z_0=i
\big)\,.
\end{multline*}
We iterate this procedure for the times
$(n-2)m^a,\dots,2m^a,m^a$
and we obtain
\begin{align*}
P\big(
\tau(V(\d))\geq nm^{a}
\,\big|\,
Z_0=i
\big)\,&\leq\,
\bigg(
\frac{1}{C}
m^{-(a-k-2)}
\bigg)^n\\
&=\,\exp\Big(
-n\big(
(a-k-2)\ln m
+\ln C
\big)
\Big)\,.
\end{align*}
%Thus, choosing $\a>k+3$ and $m_0$ large enough so that
%$$(\a-k-2)\ln m_0+\ln C-1\,>\,0\,,$$
%we obtain the desired result by setting
%$\a'\,=\,
%(\a-k-2)\ln m_0+\ln C-1$
%and $n=m$.
Thus, setting $n=m$,
we obtain the desired result with
$\a=k+3$ and
$\a'=(\a-k-3)\ln m_0+\ln C$
for $m_0$ large enough so that $\a'>0$.

We show next the second statement of the proposition.
Let $t>0$, $i\in V(\d)$, and let us first
estimate the value of
$$P\big(
\tau(V(2\d)^c)\leq t
\,\big|\,
Z_0=i
\big)\,.$$
Let $\t$ be the last time the process 
$\Zt$
visits the set $V(\d)$ before time $\tau(V(2\d)^c)$, i.e.,
$$\t\,=\,
\max\big\lbrace\,
s<\tau(V(2\d)^c):
Z_s\in V(\d)
\,\big\rbrace\,.$$
We denote by $b$ and $c$ 
the extreme points of the discrete interval $V(\d)$,
$$b\,=\,
\lfloor(\rho^*-\d)m\rfloor+1\,,
\qquad\qquad 
c\,=\,
\lfloor(\rho^*+\d)m\rfloor\,.$$
Likewise, we denote by $b'$ and $c'$ 
the extreme points of the discrete interval $V(2\d)$,
$$b'\,=\,
\lfloor(\rho^*-2\d)m\rfloor+1\,,
\qquad\qquad 
c'\,=\,
\lfloor(\rho^*+2\d)m\rfloor\,.$$
We have then
\begin{multline*}
P\big(
\tau(V(2\d)^c)\leq t
\,\big|\,
Z_0=i
\big)\,=\,
\sum_{s<t}
P\big(
\t=s,
\tau(V(2\d)^c)\leq t
\,\big|\,
Z_0=i
\big)\\=\,
\sum_{s<t}\Big(
P\big(
\t=s,
Z_{s}=b,
\tau(V(2\d)^c)\leq t
\,\big|\,
Z_0=i
\big)\\
+P\big(
\t=s,
Z_{s}=c,
\tau(V(2\d)^c)\leq t
\,\big|\,
Z_0=i
\big)
\Big)\,.
\end{multline*}
Let us consider the first term within the parenthesis.
By the Markov property,
\begin{multline*}
P\big(
\t=s,
Z_{s}=b,
\tau(V(2\d)^c)\leq t
\,\big|\,
Z_0=i
\big)\\=\,
P\bigg(
\begin{matrix}
Z_s=b,
Z_{s+1}=b-1,
\tau(V(2\d)^c)\leq t\\
Z_r\not\in V(\d)\ \text{for}\ s<r\leq\tau(V(2\d)^c)
\end{matrix}
\,\bigg|\,
Z_0=i
\bigg)\\
\leq\,
P\bigg(
\begin{matrix}
Z_r\not\in V(\d)\ \text{for}\ r\leq\tau(V(2\d)^c)\\
\tau(V(2\d)^c)\leq t-s-1
\end{matrix}
\,\bigg|\,
Z_0=b-1
\bigg)
\\
\leq\,
P\big(
Z_{\tau(V(2\d)^c\cup\{b\})}\in V(2\d)^c
\,\big|\,
Z_0=b-1
\big)\\
=\,P\big(
Z_{\tau(\{b'-1,b\})}=b'-1
\,\big|\,
Z_0=b-1
\big)
\,.
\end{multline*}
We now use the formulas provided at the beginning of the section:
$$P\big(
Z_{\tau(\{b'-1,b\})}=b'-1
\,\big|\,
Z_0=b-1
\big)\,=\,
\frac{\displaystyle\frac{1}{\pi(b-1)}}
{\displaystyle\sum_{i=b'-1}^{b-1}\frac{1}{\pi(i)}}\,.$$
%Grâce à la monotonicité de $\pi$ nous avons
%$$\forall z\in\{a'-1,\dots,a-1\}\qquad \pi(a'-1)\leq\pi(z)\leq\pi(a-1).$$
Therefore,
$$P\big(
Z_{\tau(\{b'-1,b\})}=b'-1
\,\big|\,
Z_0=b-1
\big)\,\leq\,
\frac{\pi(b'-1)}{\pi(b-1)}\,.$$
Let $\e>0$ and let
$\ell,m$ be large enough and $q$ small enough so that
\begin{align*}
\bigg|
\frac{1}{m}
\ln\pi(b-1)-
f_{\d'}(\rho^*-\d)
\bigg|\,&<\,\frac{\e}{2}\,,\\
\bigg|
\frac{1}{m}
\ln\pi(b'-1)-
f_{\d'}(\rho^*-2\d)
\bigg|\,&<\,\frac{\e}{2}\,.
\end{align*}
We have then
\begin{multline*}
\frac{\pi(b'-1)}{\pi(b-1)}\,=\,
\exp\bigg(
m\Big(
\frac{1}{m}\ln\pi(b'-1)
-\frac{1}{m}\ln\pi(b-1)
\Big)
\bigg)\\
\leq\,
\exp\Big(-m
\big(
f_{\d'}(\rho^*-\d)-f_{\d'}(\rho^*-2\d)-\e
\big)
\Big)\,.
\end{multline*}
Thanks to assumption 2, we can choose $\e$ and $\d'$ small enough so that
$${\e\,<\,
f_{\d'}(\rho^*-\d)-f_{\d'}(\rho^*-2\d)}\,.$$
We choose $\b_1$ as follows:
$$\b_1\,=\,f_{\d'}(\rho^*-\d)-f_{\d'}(\rho^*-2\d)-\e\,.$$
We have $\b_1>0$ and
$$P\big(
\t=s,
Z_{s+1}=b-1,
\tau(V(2\d)^c)\leq t
\,\big|\,
Z_0=i
\big)\,\leq\,
\exp(-\b_1m)\,.$$
The term
$$P\big(
\t=s,
Z_{s+1}=c+1,
\tau(V(2\d)^c)\leq t
\,\big|\,
Z_0=i
\big)$$
is dealt with in a similar fashion, thus obtaining $\b_2>0$ such that
$$P\big(
\t=s,
Z_{s+1}=c+1,
\tau(V(2\d)^c)\leq t
\,\big|\,
Z_0=i
\big)
\,\leq\,
\exp(-\b_2 m)\,.$$
It follows that
$$P\big(
\tau(V(2\d)^c)\leq t
\,\big|\,
Z_0=i
\big)\,\leq\,
t\Big(
\exp(-\b_1 m)+
\exp(-\b_2 m)
\Big)\,.$$
We choose $\b<\min(\b_1,\b_2)$ and
for $m_0$ large enough
$$\b'\,=\,
\min(\b_1-\b,\b_2-\b)-\frac{1}{m_0}\ln2\,.$$
We apply the previous inequality at time
$t=\exp(\b m)$ and
we obtain the desired result.
\end{proof}

\subsection{The initial case}\label{Init}        
We study here the case $k=0$.
We will use the results from the previous section to study
the birth and death Markov chains $(Z^\t_t(0))_{t\geq 0}$,
$\t=k+1$ or $\t=\ell$.
These processes are very similar to the upper and lower
birth and death chains studied in \cite{Cerf}.
In particular,
the analysis in chapter 9 of \cite{Cerf}
is still valid for
$(Z^\t_t(0))_{t\geq 0}$.
The process $(Z^\t_t(0))_{t\geq 0}$ 
is a birth and death Markov chain taking values on $\zm$
and having the following transition probabilities
\begin{align*}
\d_0\,&=\,1
\,,\cr
\delta_i
\,&=\,
\frac{\displaystyle\sigma i(m-i) 
M_H(0,0)+ 
(m-i)^2
M^\t_H(\theta,0) }
{\displaystyle m(\sigma i + m -i)}\,,
&&\quad 1\leq i\leq m-1\,,\\
\gamma_i
\,&=\,
\frac{\displaystyle \sigma i^2 
\big(1-M_H(0,0) \big)+
i(m-i)
\big(1-M^\t_H(\theta,0) \big)
}{\displaystyle m(\sigma i + m -i)}\,,
&&\quad 1\leq i\leq m\,,
\end{align*}
where 
$M^\ell_H(\ell,0)=0$ 
and
$M^{K+1}_H(K+1,0)=M_H(1,0)$.
The mutation probabilities $M^\t_H$
depend on the parameters $\ell$ and $q$.
In particular, 
the process $(Z^\t_t(0))_{t\geq 0}$
belongs to the class of birth and death Markov chains 
studied in the previous section,
even if there is no parameter $\d'$.
We show next that $(Z^\t_t(0))_{t\geq 0}$ 
fulfils the three assumptions made in section \ref{Bdchains}.
From the expressions given for $\d_i$ and $\g_i$
we see that for all $\ell,m\geq 1$, $q\in[0,1]$,
\begin{align*}
\forall i\in\lbrace\, 0,\dots,m-1\,\rbrace\qquad
&\d_i\,\geq\,\frac{M_H(0,0)}{m^2}\,,\\
\forall i\in\lbrace\, 1,\dots,m\,\rbrace\qquad
&\g_i\,\geq\,\frac{1-M_H(0,0)}{m^2}\,.
\end{align*}
Define a function
$\phi:\,
]0,1]\times[0,1[\,\times\,]0,1[\,\lra \,]0,+\infty[\,$
by
$$\phi(\b,\e,\rho)\,=\,
\index{$\phi(\b,\e,\rho)$}
\frac{\displaystyle (1-\rho)\big(\sigma\b \rho+(1-\rho)\e\big)}
{\displaystyle\rho\big(\sigma(1-\b)\rho+(1-\rho)(1-\e)\big)}\,,$$
and let $\rho(\b,\e)$
be the only positive root of the equation 
$\phi(\b,\e,\rho)=1$, i.e.,
$$\rho(\b,\e)\,=\,
\frac{1}{2(\s-1)}\Big(
\s\b-1-\e+
\sqrt{(\s\b-1-\e)^2+4\e(\s-1)}\Big)\,.$$
As shown in \cite{Cerf}, we have:
\begin{align*}
1\leq i\leq j\leq\lfloor \rho(\beta,\e) m\rfloor\qquad
&\Longrightarrow\qquad \pi(i)\,\leq \pi(j)\,,\cr
\lfloor \rho(\beta,\e) m\rfloor
\leq i\leq j\leq m\qquad
&\Longrightarrow\qquad \pi(i)\,\geq \pi(j)\,.
\end{align*}
We have also the following result:
\begin{proposition}\label{ldpi}
Let $a \in \,]0,+\infty[\,$.
For $\rho\in[0,1]$, we have
$$
\lim_{
\genfrac{}{}{0pt}{1}{\ell,m\to\infty}
{q\to 0,\,
{\ell q} \to a}
}
\,\frac{1}{m}\ln\pi(\lfloor\rho m\rfloor)\,=\,
\int_0^\rho\ln \phi(
\exp(-a)
,0,s)\,ds\,.$$
\end{proposition}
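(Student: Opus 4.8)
The plan is to compute the limit of $\frac1m\ln\pi(\lfloor\rho m\rfloor)$ directly from the definition of $\pi$. Recall that $\pi(0)=1$ and, for $1\le i<m$,
$$\pi(i)\,=\,\frac{\d_1\cdots\d_i}{\g_1\cdots\g_i}\,=\,\prod_{j=1}^i\frac{\d_j}{\g_j}\,.$$
Hence $\frac1m\ln\pi(\lfloor\rho m\rfloor)=\frac1m\sum_{j=1}^{\lfloor\rho m\rfloor}\ln(\d_j/\g_j)$, and the idea is to recognize this sum as a Riemann sum. First I would write $\d_j/\g_j$ in terms of the function $\phi(\b,\e,\rho)$ introduced just above the statement. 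Plugging $i=j$ into the explicit formulas for $\d_i,\g_i$ one finds, after cancelling the common factor $m(\s j+m-j)$,
$$\frac{\d_j}{\g_j}\,=\,\frac{\s j(m-j)M_H(0,0)+(m-j)^2M_H^\t(\t,0)}{\s j^2\big(1-M_H(0,0)\big)+j(m-j)\big(1-M_H^\t(\t,0)\big)}\,=\,\phi\big(M_H(0,0),\,M_H^\t(\t,0),\,j/m\big)\,,$$
which is immediate once one substitutes $\b=M_H(0,0)$, $\e=M_H^\t(\t,0)$, $\rho=j/m$ into the defining formula for $\phi$ and multiplies numerator and denominator by $m^2$.

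Next I would pass to the limit in the parameters. In the asymptotic regime $\ell,m\to\infty$, $q\to0$, $\ell q\to a$ we have $M_H(0,0)\to e^{-a}$ (this is the $b=c=0$ case of the displayed limit for $M_H(b,c)$ recalled in Section~\ref{Upper}), and for both choices $\t=\ell$ and $\t=K+1$ we have $M_H^\t(\t,0)\to 0$: indeed $M_H^\ell(\ell,0)=0$ identically, while $M_H^{K+1}(K+1,0)=M_H(1,0)\to 0$ since $1>0$. So the integrand parameters converge to $\b=e^{-a}$ and $\e=0$. Since $\phi$ is continuous and bounded away from $0$ and $\infty$ uniformly for $j/m$ in a neighbourhood of $[0,1]$ and $(\b,\e)$ near $(e^{-a},0)$ (note $e^{-a}\in\,]0,1[$ because $a\in\,]0,+\infty[$, and the denominator of $\phi$ stays positive), the function $\ln\phi$ is bounded and the convergence is uniform. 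Therefore
$$\frac1m\sum_{j=1}^{\lfloor\rho m\rfloor}\ln\phi\big(M_H(0,0),M_H^\t(\t,0),j/m\big)\ \longrightarrow\ \int_0^\rho\ln\phi(e^{-a},0,s)\,ds\,,$$
the sum being a Riemann sum for the integral of the continuous function $s\mapsto\ln\phi(e^{-a},0,s)$ on $[0,\rho]$.

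The only delicate point is the uniformity near the endpoint $s=0$: as $s\to0$ the factor $\phi(\b,\e,s)$ behaves like $(\s\b\cdot s+\e)/(s\cdot(\text{bounded}))$, so if $\e$ were exactly $0$ the integrand $\ln\phi$ would stay finite (the $s$'s cancel), but for $\e=M_H^\t(\t,0)>0$ small one has $\phi(\b,\e,s)\approx \e/(Cs)$ for $s\lesssim \e$, which blows up. I would handle this by splitting the sum at $j\approx \e m$: the contribution of the first block $1\le j\le \lfloor\e m\rfloor$ is $O(\e\ln(1/\e)+\e\ln m)$ divided by… no — more carefully, each term is $O(\ln m)$ and there are $O(\e m)$ of them, so the block contributes $O(\e\ln m)$ to $\frac1m(\cdots)$, which tends to $0$ since $\e\to0$ (and in the regime $\ell q\to a$ one has $q=O(1/\ell)$, so $\e=M_H(1,0)=O(q)=O(1/\ell)=O(1/m^{?})$ up to the ratio $m/\ell\to\a$, giving $\e\ln m\to0$ comfortably when $\a<\infty$; when $\a=\infty$ one argues directly that $\e\ln m\to 0$ still holds because $\e\sim a/\ell$ and $\ln m$ grows only logarithmically). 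Meanwhile on the remaining block $\lfloor\e m\rfloor< j\le\lfloor\rho m\rfloor$ the integrand is uniformly close to $\ln\phi(e^{-a},0,j/m)$ and the Riemann sum converges, while $\int_0^{\e}\ln\phi(e^{-a},0,s)\,ds\to0$. Assembling the two blocks gives the claimed limit. I expect this endpoint estimate to be the main (and essentially only) obstacle; the rest is the routine algebraic identification $\d_j/\g_j=\phi(M_H(0,0),M_H^\t(\t,0),j/m)$ together with continuity of $\ln\phi$.
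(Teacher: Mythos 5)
Your route is the natural one and, as far as it goes, the one behind the result: the paper itself gives no proof (it refers to the detailed proof in \cite{Cerf}), and that proof proceeds exactly by writing $\frac1m\ln\pi(\lfloor\rho m\rfloor)=\frac1m\sum_{j\le\lfloor\rho m\rfloor}\ln(\d_j/\g_j)$, using the exact identity $\d_j/\g_j=\phi\big(M_H(0,0),M_H^\t(\t,0),j/m\big)$, and passing to the limit of a Riemann sum as $(\b,\e)=(M_H(0,0),M_H^\t(\t,0))\to(\exa,0)$. Your algebraic identification is correct, and you correctly single out the left endpoint (the case $\t=K+1$, where $\e=M_H(1,0)>0$) as the delicate point.

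However, your handling of that endpoint contains a genuine flaw. You bound each term of the block $1\le j\le\lfloor\e m\rfloor$ by $O(\ln m)$, get a contribution $O(\e\ln m)$ to $\frac1m\ln\pi$, and then assert $\e\ln m\to0$ ``because $\ln m$ grows only logarithmically''. The hypotheses of the proposition impose no relation between the growth of $m$ and that of $\ell$ (and even in the regime $m/\ell\to\a$ with $\a=+\infty$ none is implied); since $\e=M_H(1,0)\asymp a/(\k\ell)$, your condition amounts to $\ln m=o(\ell)$, which fails, e.g., for $m=e^{\ell^2}$, a choice perfectly compatible with $\ell,m\to\infty$, $q\to0$, $\ell q\to a$. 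The repair is to sum the block instead of using the worst term: for $j\le m/2$ one has $\phi(\b,\e,j/m)\le C\big(1+\e m/j\big)$ and $\phi(\b,\e,j/m)\ge c>0$, whence
$$\Bigg|\sum_{j=1}^{\lfloor\e m\rfloor}\ln\phi\Big(\b,\e,\frac{j}{m}\Big)\Bigg|\,\leq\,
\lfloor\e m\rfloor\,\ln C'+\sum_{j=1}^{\lfloor\e m\rfloor}\ln\frac{2\e m}{j}\,=\,O(\e m)$$
by Stirling, so the block contributes $O(\e)\to0$ to $\frac1m\ln\pi$ with no assumption whatsoever relating $m$ and $\ell$. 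A second, smaller point: your claim that $\phi$ is bounded away from $0$ and $\infty$ uniformly for $j/m$ near $[0,1]$ also fails at the right endpoint, since $\phi(\b,\e,s)\to0$ as $s\to1$; for $\rho<1$ this is harmless, but the statement allows $\rho=1$, and there you need the same kind of block argument near $j=m$, where $\ln\phi\sim\ln(1-j/m)$ is unbounded but its Riemann sums still converge to the (finite) integral.
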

A detailed proof is provided in \cite{Cerf}.
Let us define, for $\s\exa>1$,
$$\rho^*_0\,=\,
\rho\big(\exa,0\big)\,=\,
\frac{\displaystyle\sigma\exa-1}{\displaystyle\sigma-1}\,.
$$
The function
$$\rho\mapsto
\int_0^{\rho}\ln \phi(\exa,0,s)\,ds$$
is increasing on
$\,]0,\rho^*_0[\,$,
and decreasing on
$\,]\rho^*_0,1[\,$.
We also have
$$\lim_{
\genfrac{}{}{0pt}{1}{\ell,m\to\infty}
{q\to 0,\,
{\ell q} \to a}
}\,
\rho\big(M_H(0,0),M_H^\t(\t,0)\big)\,=\,
\rho^*_0\,.$$
Since all three assumptions are verified
we can apply proposition \ref{polexpbd}\
to the process $(Z^\t_t(0))_{t\geq 0}$.
Let $\d>0$ and define
$$V_0(\d)\,=\,
\Big\lbrace\,
i\in\zm
:
\Big|\frac{i}{m}-\rho^*_0\Big|<\d
\,\Big\rbrace\,.$$
We also define, for any subset $A\subset \zm$
the hitting time of $A$:
$$\tau(A)\,=\,
\inf\big\lbrace\,
t\geq 0
:
Z^\t_t(0)\in A
\,\big\rbrace\,.$$
\begin{corollary}\label{polexp0}
Let $\d>0$.
There exist positive real numbers
$\a_0,\a_0',\b_0,\b'_0$
(depending on $\d$),
such that for
$\ell,m$ big enough and
$q$ small enough,
\begin{align*}
\forall i\in\zm\qquad
&P\big(
\tau(V_0(\d))\geq m^{\a_0}
\,\big|\,
Z^\t_0(0)=i
\big)\,\leq\,
\exp(-\a'_0 m)\,,\\
\forall i\in V_0(\d)\qquad
&P\big(
\tau(V_0(2\d)^c)\leq \exp(\b_0 m)
\,\big|\,
Z^\t_0(0)=i
\big)\,\leq\,
\exp(-\b'_0 m)\,.
\end{align*}
\end{corollary}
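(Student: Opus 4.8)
The plan is to check that the birth and death Markov chain $(Z^\t_t(0))_{t\geq 0}$ described just above meets the three assumptions of Section~\ref{Bdchains}, and then to read off the statement directly from Proposition~\ref{polexpbd} with $\rho^*=\rho^*_0$ and $V(\d)=V_0(\d)$. The one structural simplification to keep in mind is that this chain carries no parameter $\d'$: consequently every clause in Assumptions~2 and~3 that refers to the limit $\d'\to 0$ is vacuous, and one may take $\rho_1^*=\rho_2^*=\rho^*=\rho^*_0$ from the outset.

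First I would dispose of Assumption~1. The explicit formulas for $\d_i$ and $\g_i$ recorded above give, for every $\ell,m\geq 1$ and $q\in[0,1]$, the bounds $\d_i\geq M_H(0,0)/m^2$ for $0\leq i<m$ and $\g_i\geq\big(1-M_H(0,0)\big)/m^2$ for $0<i\leq m$. Since $M_H(0,0)\to\exa\in\,]0,1[\,$ in the regime $\ell q\to a$, both $M_H(0,0)$ and $1-M_H(0,0)$ stay bounded away from $0$ uniformly in $m$ once $\ell$ is large and $q$ small; hence Assumption~1 holds with polynomial exponent $2$ and a suitable constant $C>0$. Next, Assumption~2 is precisely Proposition~\ref{ldpi}: the limit $\frac1m\ln\pi(\lfloor\rho m\rfloor)\to f(\rho)$ holds with $f(\rho)=\int_0^\rho\ln\phi(\exa,0,s)\,ds$, and $f$ is increasing on $\,]0,\rho^*_0[\,$ and decreasing on $\,]\rho^*_0,1[\,$, hence by continuity increasing on $[0,\rho^*_0[$ and decreasing on $\,]\rho^*_0,1]$; since $\rho^*_0$ does not depend on $\ell,m,q$, nothing more is required. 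Finally, for Assumption~3 one uses the monotonicity of $\pi$ quoted from \cite{Cerf}: $\pi(i)\leq\pi(j)$ for $1\leq i\leq j\leq\lfloor\rho(\b,\e)m\rfloor$ and $\pi(i)\geq\pi(j)$ for $\lfloor\rho(\b,\e)m\rfloor\leq i\leq j\leq m$, where $\b=M_H(0,0)$ and $\e=M_H^\t(\t,0)$; taking $r_1=r_2=\rho\big(M_H(0,0),M_H^\t(\t,0)\big)$ supplies the required thresholds, and $M_H(0,0)\to\exa$ together with $M_H^\t(\t,0)\to 0$ give $r_1,r_2\to\rho(\exa,0)=\rho^*_0=\rho^*$.

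With the three assumptions verified, Proposition~\ref{polexpbd} applied to $(Z^\t_t(0))_{t\geq 0}$ yields positive reals $\a_0,\a'_0,\b_0,\b'_0$ (depending on $\d$) satisfying exactly the two announced inequalities, which completes the proof of Corollary~\ref{polexp0}. There is no genuine obstacle here beyond matching notation; the only point that rewards a moment's attention is the collapse of the nested intervals of Assumptions~2 and~3 onto the single point $\rho^*_0$ when $\d'$ is absent, which is what makes the estimates come out cleanly with $V_0(\d)$ centred at $\rho^*_0$.
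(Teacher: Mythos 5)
Your proposal is correct and follows the paper's own argument essentially verbatim: you verify Assumptions 1--3 of the birth-and-death section using the explicit lower bounds on $\d_i,\g_i$, Proposition~\ref{ldpi}, and the monotonicity of $\pi$ around $\rho\big(M_H(0,0),M_H^\t(\t,0)\big)$, then invoke Proposition~\ref{polexpbd}. Your observation that the absence of the parameter $\d'$ collapses $\rho_1^*,\rho_2^*$ onto $\rho^*_0$ is exactly the simplification the paper also notes.
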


\subsection{The inductive step}
We perform now the inductive step.
Let us fix an integer ${k\in\zk}$.
We suppose that the result of theorem \ref{polexp}
is true at rank $k-1$,
and we prove that it remains true at rank $k$.
We recall that, for $\d>0$,
the subset
$U_k(\d)\subset \cE_K$ is defined by
$$U_k(\d)\,=\,
\Big\lbrace
z\in\cE_K : \Big|
\frac{z_i}{m}-\rho^*_i
\Big|<\d,\quad
0\leq i\leq k
\Big\rbrace\,.$$
The hitting time of a subset $A\subset\cE_K$ 
is defined by
$$\tau(A)\,=\,
\inf\lbrace\, 
t\geq 0: Z_t^\t\in A
\,\rbrace\,.$$
We will prove that for $\d>0$, there exist positive real numbers
$\a_k,\a'_k,\b_k,\b'_k$
(depending on $\d,\d'$) 
such that for
$\ell,m$ large enough
and $q$ small enough,
\begin{align*}
\forall z\in\cE_K
&\qquad
P\big(\tau(U_k(\d))\geq m^{\a_k}
\,\big|\, 
Z^\t_0=z\big)\,\leq\,
\exp(-\a'_k m)\,,\\
\forall z\in U_k(\d)
&\qquad
P\big(\tau(U_k(2\d)^c)\leq \exp(\b_k m)
\,\big|\, 
Z^\t_0=z\big)\,\leq\,
\exp(-\b'_k m)\,.
\end{align*}
%The inductive step relies on the following proposition:
%\begin{proposition}\label{polexpk}
%Let $\d,\d'>0$.
%There exist positive real numbers $\a_k,\a'_k$, $\b_k,\b'_k$
%(depending on $\d,\d'$) such that for 
%$\ell,m$ big enough
%and $q,\d'$ small enough:
%
%$\bullet$ For all $z$ in $U_{k-1}(\d')$,
%$$
%P\big(
%\tau(U_k(\d))\geq m^{\a_k}
%\,\big|\, 
%Z^\t_0=z,\ 
%Z^\t_t\in U_{k-1}(2\d'),\ 0\leq t\leq m^{\a_k}
%\big)\,\leq\,
%e^{-\a'_k m}\,.
%$$
%$\bullet$ For all $z$ in $U_{k-1}(\d')\cap U_{k}(\d)$,
%$$
%P\big(
%\tau(U_k(2\d)^c)\leq e^{\b_k m}
%\,\big|\, 
%Z^\t_0=z,\
%Z^\t_t\in U_{k-1}(2\d'),\ 0\leq t\leq e^{\b_k m}
%\big)\,\leq\,
%e^{-\b'_k m}\,.
%$$
%\end{proposition}
Let $\d'>0$. 
By the induction hypothesis,
the process $\Ztt$ spends most of its time inside the set $U_{k-1}(2\d')$.
Therefore, we will study the dynamics of the $k$th coordinate 
$\ztk$ when $Z^\t_t$ is in $U_{k-1}(2\d')$.
Conditionally on
$\smash{\big(Z^\t_t(0),\dots,
Z^\t_t(k-1)\big)_{t\geq 0}}$,
the process
$(Z^\t_t(k))_{t\geq 0}$
can be seen as a birth and death process
having time--dependent transition probabilities.
The classical formula for birth and death processes
cannot be applied directly to the process $\ztk$. 
Our goal is to get rid of this time dependence.
We will build a process
$\Zt$
whose conditional law given
$\smash{\big(Z^\t_t(0),\dots,
Z^\t_t(k-1)\big)_{t\geq 0}}$
is the same as the law of
$(Z^\t_t(k))_{t\geq 0}$.
We will then realize a coupling between the process
$\Zt$
and a pair of birth and death Markov chains,
a lower one
$\ztl$
and an upper one
$\ztu$.
The key point is that the transition probabilities of
$\ztl$ and $\ztu$
are not time dependent any more.
The coupling only works as long as the process
$(Z^\t_t)_{t\geq 0}$
is in the set $U_{k-1}(2\d')$.
Since
$\zt$
spends most of its time inside $U_{k-1}(2\d')$,
the coupling will allow us to obtain the desired estimates.
For $i\in\{\,0,\dots,K\,\}$,
we define 
a pair of maps
${\d_i,\g_i:[0,1]^k\times\zm\lra\zm}$
as follows.
Let $\rho=(\rho_0,\dots,\rho_{k-1})\in [0,1]^k$.

$\bullet$ We set $\d_m(\rho)=0$ and for $0\leq i<m$,
\begin{multline*}
\d_i(\rho)\,=\,
\frac{1-i/m}
{(\s-1)\rho_0+1}
\bigg(
\s \rho_0 M_H(0,k)+
\sum_{l=1}^{k-1}\rho_l M_H(l,k)\\
+\frac{i}{m}M_H(k,k)+
\Big(
1-\sum_{l=0}^{k-1}\rho_l-\frac{i}{m}
\Big)M_H(\t,k)
\bigg)\,.
\end{multline*}
$\bullet$ We set $\g_0(\rho)=0$ 
and for $0<i\leq m$,
\begin{multline*}
\g_i(\rho)\,=\,
\frac{i/m}
{(\s-1)\rho_0+1}
\bigg(
\s \rho_0 \big(1-M_H(0,k)\big)+
\sum_{l=1}^{k-1}\rho_l \big(1-M_H(l,k)\big)\\
+\frac{i}{m}\big(1-M_H(k,k)\big)+
\Big(
1-\sum_{l=0}^{k-1}\rho_l-\frac{i}{m}
\Big)\big(1-M_H(\t,k)\big)
\bigg)\,.
\end{multline*}
These maps allow us to express the transition probabilities of
$\ztk$
in the following way.
For $z=(z_0,\dots,z_k)\in\cE_K\setminus\{\,0\,\}$ and $i=z_k$,
\begin{align*}
P\big(
Z^\t_{t+1}(k)=i+1
\,\big|\,
Z^\t_t=z
\big)\,&=\,
\d_i\Big(
\frac{z_0}{m},\dots,\frac{z_{k-1}}{m}
\Big)\,,\\
P\big(
Z^\t_{t+1}(k)=i-1
\,\big|\,
Z^\t_t=z
\big)\,&=\,
\g_i\Big(
\frac{z_0}{m},\dots,\frac{z_{k-1}}{m}
\Big)\,.
\end{align*}
The process
$\ztk$
is not well suited to build a coupling,
therefore,
we build another process
$\Zt$
whose conditional law given
the trajectories
$\smash{\big(Z^\t_t(0),\dots,
Z^\t_t(k-1)\big)_{t\geq 0}}$
is the same as the law of $\ztk$.
We define a map
$$C:[0,1]^k\times\zm\times[0,1]\lra[0,1]$$
by setting for $\rho\in[0,1]^k$,
$i\in\zm$ and
$u\in [0,1]$,
$$C(\rho,i,u)\,=\,
i-1_{u<\g_i(\rho)}+1_{u>1-\d_i(\rho)}\,.$$
The map $C$ is defined so that if $U$ 
is a uniform random variable on $[0,1]$, 
then for
$\rho\in[0,1]^k$
and
$i\in\zm$,
$$P\big(C(\rho,i,U)=i+1\big)\,=\,
\d_i(\rho),\qquad 
P\big(C(\rho,i,U)=i-1\big)\,=\,
\g_i(\rho)\,.
$$
\begin{lemma}\label{monoC}
For $m$ large enough,
the map $C$
is non--decreasing with respect to the argument $i$:
\begin{multline*}
\forall \rho\in[0,1]^k\quad 
\forall i,j\in\zm\quad 
\forall u\in[0,1]\\
i\leq j\,\Lra\,
C(\rho,i,u)\leq C(\rho,j,u)\,.
\end{multline*}
\end{lemma}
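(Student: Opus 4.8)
The plan is to reduce the asserted monotonicity to a single scalar inequality between the birth and death rates, and then to verify that inequality from the explicit formulas for $\d_i$ and $\g_i$. First, since $C(\rho,i,u)\le C(\rho,i+1,u)\le\cdots\le C(\rho,j,u)$ as soon as the case of consecutive indices is settled, I would only treat $j=i+1$, with $0\le i\le m-1$. For fixed $\rho$ and $u$, the map $i\mapsto C(\rho,i,u)$ displaces a point by at most one unit, so $C(\rho,i,u)\le i+1$ and $C(\rho,i+1,u)\ge i$; hence $C(\rho,i,u)\le C(\rho,i+1,u)$ can fail only if $C(\rho,i,u)=i+1$ and $C(\rho,i+1,u)=i$. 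By the definition of $C$, the first equality forces $u>1-\d_i(\rho)$ and the second forces $u<\g_{i+1}(\rho)$; so there is nothing to prove as soon as
$$\d_i(\rho)+\g_{i+1}(\rho)\,\le\,1\qquad\text{for all } 0\le i\le m-1\,,$$
because then $u>1-\d_i(\rho)\ge\g_{i+1}(\rho)>u$ is impossible.

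To obtain this inequality I would rewrite both rates with a common structure. Set $x=i/m$, $x'=(i+1)/m$, $W=(\s-1)\rho_0+1$, and let $P(x)$ denote the quantity inside the parentheses in the definition of $\d_i(\rho)$, so that $\d_i(\rho)=(1-x)P(x)/W$. For each index the coefficient $M_H(\cdot,k)$ and its complement $1-M_H(\cdot,k)$ sum to $1$, while the associated weights $\s\rho_0,\rho_1,\dots,\rho_{k-1},x',1-\rho_0-\cdots-\rho_{k-1}-x'$ sum to $W$; hence the bracket in $\g_{i+1}(\rho)$ equals $W-P(x')$, so $\g_{i+1}(\rho)=x'\big(W-P(x')\big)/W$ and
$$\d_i(\rho)+\g_{i+1}(\rho)\le 1\quad\Longleftrightarrow\quad(1-x)P(x)-x'P(x')\le W(1-x')\,.$$
Now $P$ is affine, $P(x)=A+Bx$ with $B=M_H(k,k)-M_H(\t,k)$, where $M_H(\t,k)$ is $M_H^\ell(\ell,k)=0$ when $\t=\ell$ and $M_H^{K+1}(K+1,k)=M_H(k+1,k)$ when $\t=K+1$. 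On the range of interest the weights above are non-negative and the $M_H(\cdot,k)$ lie in $[0,1]$, so $P$ takes values in $[0,W]$; moreover $B\ge 0$, which for $\t=\ell$ is immediate since $B=M_H(k,k)>0$, and for $\t=K+1$ holds for $\ell$ large and $q$ small because $M_H(k,k)\to e^{-a}$ and $M_H(k+1,k)\to 0$. Thus $P$ is non-decreasing, so using $1-x\ge 0$,
$$(1-x)P(x)-x'P(x')\,\le\,(1-x)P(x')-x'P(x')\,=\,(1-x-x')P(x')\,,$$
and I would finish by a two-line case split: if $1-x-x'\ge 0$, then $(1-x-x')P(x')\le(1-x-x')W\le(1-x')W$ because $P(x')\le W$ and $x\ge 0$; if $1-x-x'<0$, then $(1-x-x')P(x')\le 0\le(1-x')W$ because $x'\le 1$. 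In both cases the desired inequality holds, and the lemma follows.

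I expect the scalar estimate $\d_i(\rho)+\g_{i+1}(\rho)\le 1$ to be the only genuine difficulty. A crude bound, $\d_i(\rho)\le 1-x$ together with $\g_{i+1}(\rho)\le x'$, only yields $\le 1+1/m$, so one really must exploit that $\d_i+\g_{i+1}$ is governed by the single affine function $P$ with $P\in[0,W]$ and $B\ge 0$; it is through $B\ge 0$ for the upper chain that the asymptotic regime (concealed in the hypothesis that $m$, i.e. the parameters, be large enough) enters the argument. By comparison, the reduction in the first paragraph is routine once one notices that $C$ moves points by at most one unit, and the ingredients needed for the estimate — $P\in[0,W]$, $P$ non-decreasing, and the case split — are each immediate.
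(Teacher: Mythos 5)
Your proposal is correct and follows the paper's first reduction exactly: treat only consecutive indices $j=i+1$, observe that monotonicity of $C$ can only fail when $u>1-\d_i(\rho)$ and $u<\g_{i+1}(\rho)$ simultaneously, and hence reduce everything to the scalar inequality $\d_i(\rho)+\g_{i+1}(\rho)\leq 1$. Where you diverge is in how this inequality is proved. The paper fixes $y$ with $i=\lfloor ym\rfloor$, computes $\lim_{m\to\infty}\big(\d_i(\rho)+\g_{i+1}(\rho)\big)$, and bounds each bracket by the elementary estimate $(1-u)v+u(1-v)<1$, concluding that the sum is below $1$ for $m$ large enough; this needs no structural information beyond $M_H(\cdot,k)\in[0,1]$, but it only gives the conclusion asymptotically in $m$ and leaves the uniformity in $i$ and $\rho$ implicit. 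You instead exploit the exact algebraic structure: since the weights $\s\rho_0,\rho_1,\dots,\rho_{k-1},x',1-\rho_0-\cdots-\rho_{k-1}-x'$ sum to $W=(\s-1)\rho_0+1$, the bracket in $\g_{i+1}$ is $W-P(x')$, and the affine, non--decreasing character of $P$ (i.e. $M_H(k,k)\geq M_H(\t,k)$) plus $0\leq P\leq W$ gives $\d_i(\rho)+\g_{i+1}(\rho)\leq 1$ exactly, for every $m$, $i$ and admissible $\rho$. This buys a cleaner, non--asymptotic inequality with no uniformity issue, at the price of the extra input $M_H(k,k)\geq M_H(\t,k)$ — trivial for $\t=\ell$, and valid for $\t=K+1$ only for $\ell$ large and $q$ small, so your ``large enough'' condition bears on $(\ell,q)$ rather than on $m$ as in the statement (harmless in the regime of the paper, which elsewhere uses the same kind of inequality $M_H(l,k)\geq M_H(\t,k)$, but worth saying explicitly); your counterexample-style remark that some such condition is genuinely needed is accurate. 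Note also that both your argument ($P\geq 0$, $P\leq W$) and the paper's (multiplying each bracket bound by its coefficient) implicitly require the last weight $1-\rho_0-\cdots-\rho_{k-1}-i/m$ to be non--negative, i.e. the lemma is really used, and proved, on the range of configurations actually produced by the process rather than literally for all $\rho\in[0,1]^k$ and $i\in\zm$; you flag this restriction (``range of interest'') no less carefully than the paper does.
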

\begin{proof}
It is enough to show that the result holds for $j=i+1$.
We have
\begin{multline*}
C(\rho,i+1,u)-C(\rho,i,u)\,=\\
1-\big(
1_{u<\g_{i+1}(\rho)}-
1_{u<\g_i(\rho)}
\big)+
\big(
1_{u>1-\d_{i+1}(\rho)}-
1_{u>1-\d_i(\rho)}
\big)\,.
\end{multline*}
This quantity is negative if and only if
$$\g_i(\rho)\,\leq\,u\,<\,\g_{i+1}(\rho)
\qquad\text{and}\qquad
1-\d_i(\rho)\,<\,u\,\leq\, 1-\d_{i+1}(\rho)\,.$$
This can only happen if
$\d_i(\rho)+\g_{i+1}(\rho)>1$. 
However, taking 
$y\in
[0,1]$ such that
$i=\lfloor ym\rfloor$,
we see that
\begin{multline*}
\lim_{m\ra\infty}\d_i(\rho)+\g_{i+1}(\rho)\,=\\
\frac{1}{(\s-1)\rho_0+1}\bigg(
\s \rho_0\Big(
(1-y)M_H(0,k)
+y\big(
1-M_H(0,k)
\big)
\Big)
\\+
\sum_{l=1}^{k-1}
\rho_l\Big(
(1-y)M_H(l,k)
+y\big(
1-M_H(l,k)
\big)
\Big)
\\+
y\Big(
(1-y)M_H(k,k)
+y\big(
1-M_H(k,k)
\big)
\Big)
\\+
\Big(
1-\sum_{l=0}^{k-1}\rho_l-y
\Big)
\Big(
(1-y)M_H(\t,k)
+y\big(
1-M_H(\t,k)
\big)
\Big)
\bigg)\,,
\end{multline*}
Yet, for $u\in\,]0,1[$ and $v\in[0,1]$ we have
$$(1-u)v+u(1-v)\,<\,
v+1-v\,=\,1\,.$$
Thus,
\begin{multline*}
\lim_{m\ra\infty}\d_i(\rho)+\g_{i+1}(\rho)\,<\\
\frac{1}{(\s-1)\rho_0+1}\bigg(
\s\rho_0+
\sum_{l=1}^{k-1}
\rho_l+
y+
\Big(
1-\sum_{l=0}^{k-1}\rho_l-y
\Big)
\bigg)\,=\,1\,,
\end{multline*}
as required.
\end{proof}
Let $(U_n)_{n\geq 1}$
be an i.i.d. sequence of uniform random variables on $[0,1]$.
We define the process $\Zt$
using the process $\zt$ and the sequence $(U_n)_{n\geq 1}$.
Let $z=(z_0,\dots,z_K)\in \cE_K\setminus\{0\}$ be the starting point of the process
$\zt$. We take $Z_0=z_k$ and
$$\forall n\geq 1\qquad
Z_n\,=\,
C\bigg(
\frac{Z^\t_{n-1}(0)}{m},
\dots,
\frac{Z^\t_{n-1}(k-1)}{m},
Z_{n-1},
U_n
\bigg)\,.$$
From this construction, conditionally on
$\smash{\big(Z^\t_t(0),\dots,
Z^\t_t(k-1)\big)_{t\geq 0}}$,
both processes $\Zt$ and
$\ztk$ have the same law.

Let us fix $\d'>0$.
We build next a lower 
birth and death Markov chain
$\ztl$
and an upper one
$\ztu$
in order to bound stochastically the process $\Zt$
when $\zt$ is in $U_{k-1}(2\d')$.
Let 
$W_{k-1}(\d')$ 
be the subset of $[0,1]^k$ 
given by
$$W_{k-1}(\d')\,=\,
\big\lbrace\,
\rho\in[0,1]^k:
|\rho_i-\rho^*_i|<\d',\ \,0\leq i<k
\,\big\rbrace\,.$$
In particular, we have
$$W_{k-1}(\d')\cap\frac{\Z^{k}}{m}\,=\,
U_{k-1}(\d')\,.$$
We define $\ztl$ to be a 
birth and death Markov chain
on the state space $\zm$,
having the following transition probabilities:
\begin{align*}
\d^L_i\,&=\,
\min\{\,
\d_i(\rho):
\rho\in W_{k-1}(2\d')
\,\}\,,%\qquad 
&&0\leq i\leq m\,,\\
\g^L_i\,&=\,
\max\{\,
\g_i(\rho):
\rho\in W_{k-1}(2\d')
\,\}\,,%\qquad 
&&0\leq i\leq m\,.
\end{align*}
Likewise, 
we define $\ztu$
to be a birth and death Markov chain
on the state space $\zm$
having the following transition probabilities:
\begin{align*}
\d^U_i\,&=\,
\max\{\,
\d_i(\rho):
\rho\in W_{k-1}(2\d')
\,\}%\qquad 
&&0\leq i\leq m\,,\\
\g^U_i\,&=\,
\min\{\,
\g_i(\rho):
\rho\in W_{k-1}(2\d')
\,\}%\qquad 
&&0\leq i\leq m\,.
\end{align*}
The processes
$\ztl$ and $\ztu$
are well defined,
since if $\rho_i$ and $\rho'_i$
are the points that
maximise
the functions $\g_i(\rho)$ and $\d_i(\rho)$ respectively, we have
\begin{align*}\d^L_i+\g^L_i\,&=\,
\d^L_i+\g_i(\rho_i)\,\leq\,
\d_i(\rho_i)+\g_i(\rho_i)\,\leq\,1\,,\\
\d^U_i+\g^U_i\,&=\,
\d_i(\rho'_i)+\g^U_i\,\leq\,
\d_i(\rho'_i)+\g_i(\rho'_i)\,\leq\,1\,.
\end{align*}
In order to couple these processes we
define the maps
$$C^L,C^U:\zm\times[0,1]\lra[0,1]$$
by setting for
$i\in\zm$ and $u\in[0,1]$,
\begin{align*}
C^L(i,u)\,&=\,
i-1_{u<\g^L_i}+
1_{u>1-\d^L_i}\,,\\
C^U(i,u)\,&=\,
i-1_{u<\g^U_i}+
1_{u>1-\d^U_i}\,.
\end{align*}
The maps $C^L$, $C^U$ are built
so that if $U$ is a uniform random variable on $[0,1]$, then
\begin{align*}
P\big(C^L(i,U)=i+1\big)\,=\,
\d^L_i\quad 
\text{ and }
\quad
P\big(C^L(i,U)=i-1\big)\,=\,
\g^L_i\,,\\
P\big(C^U(i,U)=i+1\big)\,=\,
\d^U_i\quad 
\text{ and }
\quad
P\big(C^U(i,U)=i-1\big)\,=\,
\g^U_i\,.
\end{align*}
The definition of the transition probabilities
$\d^L_i,\g^L_i,\d^U_i,\g^U_i$
implies that the map
$C^L$ is below the map $C$
and the map $C^U$
is above the map $C$, i.e., 
\begin{multline*}
\forall \rho\in W_{k-1}(2\d')\quad 
\forall i\in\zm\quad 
\forall u\in [0,1]\\
C^L(i,u)\,\leq\,
C(\rho,i,u)\,\leq\,
C^U(i,u)\,.
\end{multline*}
We define the processes
$\ztl$, $\ztu$
with the help of the same sequence $(U_n)_{n\geq 1}$ 
that was used to define $\Zt$.
Let $i\in\zm$ be the starting point of the processes.
We set $Z^L_0=Z^U_0=i$ and
$$\forall n\geq 1\qquad
Z^L_n\,=\,
C^L(Z^L_{n-1},U_n)\,,\quad
Z^U_n\,=\,
C^U(Z^U_{n-1},U_n)\,.$$
Let $\tau(U_{k-1}(2\d')^c)$ be the exit time from the set $U_{k-1}(2\d')$
for the process $\zt$:
$$\tau(U_{k-1}(2\d')^c)\,=\,
\inf\,\{\, t\geq 0 
:
Z^\t_t\not\in U_{k-1}(2\d') \,\}\,.$$
\begin{proposition}\label{domiZ}
Let $z=(z_0,\dots,z_K)\in U_{k-1}(2\d')$
be the starting point of the process $\Ztt$.
If
$Z^L_0=Z_0=Z^U_0=z_k$, 
then 
$$
\forall n \in [0,\tau(U_{k-1}(2\d')^c)]\qquad
Z^L_n\,\leq\,
Z_n\,\leq\,
Z^U_n\,.$$
\end{proposition}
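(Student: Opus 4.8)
The plan is to prove the sandwiching $Z^L_n\leq Z_n\leq Z^U_n$ by induction on $n$, the whole point being that the three processes $\ztl$, $\Zt$, $\ztu$ have been constructed from the \emph{same} i.i.d. sequence $(U_n)_{n\geq 1}$, so that at each step one only has to combine two monotonicity-type facts.

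First I would isolate the two ingredients. One is stated just before the proposition: for every $\rho\in W_{k-1}(2\d')$, every $i\in\zm$ and every $u\in[0,1]$,
$$C^L(i,u)\,\leq\,C(\rho,i,u)\,\leq\,C^U(i,u)\,,$$
which is immediate from $\d^L_i\leq\d_i(\rho)\leq\d^U_i$, $\g^U_i\leq\g_i(\rho)\leq\g^L_i$ and the definitions of $C,C^L,C^U$. The second is that, for $m$ large, $C^L(\cdot,u)$ and $C^U(\cdot,u)$ are non-decreasing on $\zm$; this is proved exactly as lemma \ref{monoC}, replacing $(\d_i(\rho),\g_i(\rho))$ by $(\d^L_i,\g^L_i)$ and by $(\d^U_i,\g^U_i)$. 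The only thing to check is $\d^L_i+\g^L_{i+1}\leq 1$ and $\d^U_i+\g^U_{i+1}\leq 1$ for $m$ large, and these follow from the estimate in the proof of lemma \ref{monoC}, since passing to the $\min$ over $\rho$ in the birth rate and to the $\max$ over $\rho$ in the death rate only makes the bound easier: for instance $\d^L_i+\g^L_{i+1}\leq\d_i(\rho')+\g_{i+1}(\rho')$ where $\rho'$ realises the maximum of $\g_{i+1}$, and the right-hand side is $<1$ for $m$ large.

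Then I would run the induction. Write $T=\tau(U_{k-1}(2\d')^c)$. For $n=0$ the three chains start at $z_k$, so the inequalities hold. Assume $Z^L_n\leq Z_n\leq Z^U_n$ and $n+1\leq T$, so that $n<T$ and hence $Z^\t_n\in U_{k-1}(2\d')$; since $W_{k-1}(2\d')\cap\Z^k/m=U_{k-1}(2\d')$, the vector $\rho=\big(Z^\t_n(0)/m,\dots,Z^\t_n(k-1)/m\big)$ lies in $W_{k-1}(2\d')$, so $Z_{n+1}=C(\rho,Z_n,U_{n+1})$. Applying first the sandwiching of $C$ and then the monotonicity of $C^U$ with $Z_n\leq Z^U_n$ gives $Z_{n+1}\leq C^U(Z_n,U_{n+1})\leq C^U(Z^U_n,U_{n+1})=Z^U_{n+1}$, and symmetrically the monotonicity of $C^L$ with $Z^L_n\leq Z_n$ followed by the sandwiching of $C$ gives $Z^L_{n+1}=C^L(Z^L_n,U_{n+1})\leq C^L(Z_n,U_{n+1})\leq C(\rho,Z_n,U_{n+1})=Z_{n+1}$. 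This closes the step and yields $Z^L_n\leq Z_n\leq Z^U_n$ for every $n\in[0,T]$.

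I do not anticipate a real obstacle: this is a standard monotone-coupling induction, and the two boundary cases of the stopping time ($n=0$ and $n=T$) are handled automatically. The only slightly delicate point, and the one I would write out with a little care, is the extension of lemma \ref{monoC} to $C^L$ and $C^U$, that is, the uniform-in-$i$ bounds $\d^L_i+\g^L_{i+1}\leq 1$ and $\d^U_i+\g^U_{i+1}\leq 1$ for $m$ large; everything else reduces to bookkeeping with the inequalities already established.
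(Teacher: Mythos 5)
Your proof is correct, and it follows the same overall scheme as the paper (induction on $n$, exploiting that the three chains are driven by the same uniforms $U_{n+1}$), but the two monotonicity ingredients are applied in the opposite order, which changes which key lemma is needed. The paper bounds $Z_{n+1}=C(\rho,Z_n,U_{n+1})$ by first using Lemma \ref{monoC} (monotonicity of the \emph{middle} map $C$ in $i$) together with the induction hypothesis, to compare $C(\rho,Z^L_n,U_{n+1})\leq C(\rho,Z_n,U_{n+1})\leq C(\rho,Z^U_n,U_{n+1})$, and only then invokes the sandwich $C^L\leq C\leq C^U$ at the outer points $Z^L_n$ and $Z^U_n$; this way it never needs monotonicity of $C^L$ or $C^U$. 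You instead apply the sandwich at the middle point $Z_n$ and then use monotonicity of $C^L$ and $C^U$, which is not proved in the paper; you correctly identify this as the delicate point and your sketch of it is sound: the only obstruction is $\d^L_i+\g^L_{i+1}>1$ (resp. $\d^U_i+\g^U_{i+1}>1$), and since $\d^L_i\leq\d_i(\rho')$ with $\rho'$ the maximiser of $\g_{i+1}$ over $W_{k-1}(2\d')$ (resp. $\g^U_{i+1}\leq\g_{i+1}(\rho'_i)$ with $\rho'_i$ the maximiser of $\d_i$), the estimate in the proof of Lemma \ref{monoC} rules this out for $m$ large, at the same level of uniformity as the paper's own lemma. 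So your route buys nothing extra and costs one additional (easy) lemma, whereas the paper's ordering of the two inequalities reuses Lemma \ref{monoC} as is; otherwise the arguments are equivalent.
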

\begin{proof}
We will show the inequality by induction on $n\in\mathbb N$.
For $n=0$ we have equality $Z^L_0=Z_0=Z^U_0$. 
Suppose that the inequality holds at time $n<\tau(U_{k-1}(2\d')^c)$, i.e.,
$Z_n^L\,\leq\,
Z_n\,\leq\,
Z^U_n$
and
$\smash{\big(Z^\t_n(0)/m,\dots,Z^\t_n(k-1)/m\big)}$
is in the set 
$W_{k-1}(2\d')$.
We then have
\begin{align*}
Z^L_{n+1}\,&=\,C^L(Z^L_n, U_{n+1}\big)
\,,\\
Z_{n+1}\,&=\,C\bigg(\frac{Z^\t_n(0)}{m},\dots,\frac{Z^\t_n(k-1)}{m},Z_n, U_{n+1}\bigg)
\,,\\
Z^U_{n+1}\,&=\,C^U\big(Z^U_n, U_{n+1}\big)
\,.
\end{align*}
Lemma \ref{monoC} and the induction hypothesis together imply that
\begin{multline*}
C\bigg(\frac{Z^\t_n(0)}{m},\dots,\frac{Z^\t_n(k-1)}{m},Z^L_n, U_{n+1}\bigg)\\
\leq\,C\bigg(\frac{Z^\t_n(0)}{m},\dots,\frac{Z^\t_n(k-1)}{m},Z_n, U_{n+1}\bigg)\,\leq\\
C\bigg(\frac{Z^\t_n(0)}{m},\dots,\frac{Z^\t_n(k-1)}{m},Z^U_n, U_{n+1}\bigg)\,.
\end{multline*}
Since the map
$C^L$ is below
$C$ and the map
$C^U$ is above
$C$, we have
\begin{align*}
C^L(Z^L_n, U_{n+1}\big)\,&\leq\,
C\bigg(\frac{Z^\t_n(0)}{m},\dots,\frac{Z^\t_n(k-1)}{m},Z^L_n, U_{n+1}\bigg)\,,\\
C^U\big(Z^U_n, U_{n+1}\big)\,&\geq\,
C\bigg(\frac{Z^\t_n(0)}{m},\dots,\frac{Z^\t_n(k-1)}{m},Z^U_n, U_{n+1}\bigg)\,.
\end{align*}
Combining the above inequalities we obtain
$Z^L_{n+1}\leq Z_{n+1}\leq Z^U_{n+1}$
and the induction step is completed.
\end{proof}
Let $\d>0$ and define
$$V_k(\d)\,=\,\Big\lbrace\,
i\in\zm:
\Big|
\frac{i}{m}-\rho^*_k
\Big|<\d
\,\Big\rbrace\,.
$$
We define the hitting time of a subset 
$A\subset\zm$ for the processes
$\ztl$, $\ztu$ as follows
\begin{align*}
\tau^L(A)\,&=\,
\inf\big\lbrace\,
t\geq 0:Z^L_t\in A
\,\big\rbrace\,,\\
\tau^U(A)\,&=\,
\inf\big\lbrace\,
t\geq 0:Z^U_t\in A
\,\big\rbrace\,.
\end{align*}
The following result will help to finish 
the proof of the induction step for theorem \ref{polexp}.
We recall that the definition of $\ztl$ and $\ztu$ depends on the parameter $\d'>0$. 
\begin{proposition}\label{polexpbdc}
Let $\d>0$.
There exist positive real numbers $\a,\a',\b,\b'$
(depending on $\d,\d'$)
such that for
$\ell,m$ large enough
and $q,\d'$ small enough:

$\bullet$ For all $i\in\zm$,
\begin{align*}
P\big(
\tau^L(V_k(\d))\geq m^{\a}
\,\big|\, 
Z^L_0=i
\big)\,&\leq\,
\exp(-\a' m)\,,\\
P\big(
\tau^U(V_k(\d))\geq m^{\a}
\,\big|\, 
Z^U_0=i
\big)\,&\leq\,
\exp(-\a' m)\,.
\end{align*}
$\bullet$ For all $i\in V_k(\d)$,
\begin{align*}
P\big(
\tau^L(V_k(2\d)^c)\leq\exp(\b m)
\,\big|\, 
Z^L_0=i
\big)\,&\leq\,
\exp(-\b' m)\,,\\
P\big(
\tau^U(V_k(2\d)^c)\leq\exp(\b m)
\,\big|\, 
Z^U_0=i
\big)\,&\leq\,
\exp(-\b' m)\,.
\end{align*}
\end{proposition}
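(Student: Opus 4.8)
The plan is to observe that, once $\d'>0$ is fixed, both $\ztl$ and $\ztu$ belong to the family of birth and death Markov chains studied in Section~\ref{Bdchains} (with the four parameters $m,\ell,q,\d'$), and to apply Proposition~\ref{polexpbd} to each of them with the concentration point $\rho^*=\rho^*_k$ and the sets $V(\d)=V_k(\d)$. It therefore suffices to check that Assumptions~1, 2 and~3 of Section~\ref{Bdchains} hold for $\ztl$ and for $\ztu$; the constants $\a,\a',\b,\b'$ produced by Proposition~\ref{polexpbd} depend on $\d$ and $\d'$, and taking the largest $\a$ and the smallest $\a',\b,\b'$ over the two chains gives a single set of constants valid for both, as in the statement. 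Throughout we first fix $\d'>0$ small so that $W_{k-1}(2\d')$ stays in a fixed small neighbourhood of $\rho^*=(\rho^*_0,\dots,\rho^*_{k-1})$; in particular $\rho_0$ is bounded away from $0$ and from $1$ on $W_{k-1}(2\d')$. We then let $\ell,m\to\infty$ and $q\to 0$ with $\ell q\to a$, and only at the very end do we let $\d'\to 0$.

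Assumption~1 is immediate from the explicit formulas for $\d_i(\rho)$ and $\g_i(\rho)$. On $W_{k-1}(2\d')$ the factor $(\s-1)\rho_0+1$ is bounded and $\rho_0$ is bounded below, so the bracket defining $\d_i(\rho)$ is at least $\s\rho_0 M_H(0,k)$ and the one defining $\g_i(\rho)$ is at least $\s\rho_0(1-M_H(0,k))$; since $M_H(0,k)\to a^k\exa/k!\in(0,1)$, for $\ell,m$ large and $q$ small both brackets are bounded below by a positive constant. Passing to the minimum (resp. maximum) over $W_{k-1}(2\d')$ only improves these bounds, and the remaining factors $1-i/m$, $i/m$ cost at most one power of $m$, so $\d^L_i,\d^U_i\geq C/m$ for $0\leq i<m$ and $\g^L_i,\g^U_i\geq C/m$ for $0<i\leq m$.

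For Assumptions~2 and~3 the key quantities are the products $\pi^L(i)=\d^L_1\cdots\d^L_i/(\g^L_1\cdots\g^L_i)$ and $\pi^U(i)=\d^U_1\cdots\d^U_i/(\g^U_1\cdots\g^U_i)$. Writing $\frac1m\ln\pi^L(\lfloor\rho m\rfloor)$ as the Riemann sum $\frac1m\sum_{j=1}^{\lfloor\rho m\rfloor}\ln(\d^L_j/\g^L_j)$ and using $M_H(l,k)\to a^{k-l}\exa/(k-l)!$ for $0\leq l\leq k$ together with $M_H^\t(\t,k)\to 0$ (valid both for $\t=\ell$, where $M_H^\ell(\ell,k)=0$, and for $\t=K+1$, where $M_H^{K+1}(K+1,k)=M_H(k+1,k)\to 0$), one obtains for fixed $\d'$ a limit function $f^L_{\d'}(\rho)=\int_0^\rho\ln g^L_{\d'}(s)\,ds$, where $g^L_{\d'}(s)$ is the limit of $\d^L_{\lfloor sm\rfloor}/\g^L_{\lfloor sm\rfloor}$, of the shape $\frac{1-s}{s}$ times a ratio of affine functions of $s$ with coefficients built from $\rho^*_0,\dots,\rho^*_{k-1}$ and from the min/max over $W_{k-1}(2\d')$; likewise one gets $f^U_{\d'}$ and $g^U_{\d'}$ from $\ztu$. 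As $\d'\to 0$, $W_{k-1}(2\d')$ collapses to $\{\rho^*\}$ and both $g^L_{\d'},g^U_{\d'}$ converge to $g(s)=\frac{1-s}{s}\cdot\frac{S+s\exa}{(\s-1)\rho^*_0+1-S-s\exa}$ with $S=\s\rho^*_0\frac{a^k}{k!}\exa+\sum_{l=1}^{k-1}\rho^*_l\frac{a^{k-l}}{(k-l)!}\exa$. The crucial point is that $g(s)-1$ has the sign of $S-s(\s-1)\exa$: indeed $(1-s)(S+s\exa)-s\big((\s-1)\rho^*_0+1-S-s\exa\big)=S+s(\exa-(\s-1)\rho^*_0-1)=S-s(\s-1)\exa$, using $(\s-1)\rho^*_0+1-\exa=(\s-1)\exa$. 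Hence $g>1$ on $(0,\rho^*_k)$ and $g<1$ on $(\rho^*_k,1)$, where $\rho^*_k=S/((\s-1)\exa)=\frac{1}{\s-1}\big(\s\frac{a^k}{k!}\rho^*_0+\sum_{l=1}^{k-1}\frac{a^{k-l}}{(k-l)!}\rho^*_l\big)$ is exactly the recurrence relation defining $\rho^*_k$. Since $g^L_{\d'}\leq g\leq g^U_{\d'}$ (minimising the numerator and maximising the denominator can only decrease the ratio, and conversely) and $g^L_{\d'},g^U_{\d'}\to g$ uniformly on compact subsets of $(0,1]$, for $\d'$ small $g^L_{\d'},g^U_{\d'}$ are $>1$ on an interval $[0,\rho_1^*)$ and $<1$ on $(\rho_2^*,1]$ with $\rho_1^*\leq\rho^*_k\leq\rho_2^*$ and $\rho_1^*,\rho_2^*\to\rho^*_k$; this is Assumption~2, with $\rho^*=\rho^*_k$ not depending on $\d'$. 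The same sign information applied to the consecutive ratios $\pi^L(i+1)/\pi^L(i)=\d^L_{i+1}/\g^L_{i+1}$ (and similarly for $\pi^U$) shows that $\pi^L,\pi^U$ are nondecreasing up to $\lfloor r_1 m\rfloor$ and nonincreasing from $\lfloor r_2 m\rfloor$ on, with $r_1\to\rho_1^*$ and $r_2\to\rho_2^*$, which is Assumption~3.

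With the three assumptions verified, Proposition~\ref{polexpbd} applied to $\ztl$ and to $\ztu$ yields the four estimates of Proposition~\ref{polexpbdc}. The main obstacle is Assumption~2: identifying the limiting drift of the birth and death chains and recognising that it vanishes precisely at $\rho^*_k$ --- this is exactly where the recurrence relation for $(\rho^*_k)_{k\geq 0}$ enters the proof --- while keeping the order of limits straight, namely fixing $\d'$ first, then sending $\ell,m\to\infty$ and $q\to 0$, and only afterwards letting $\d'\to 0$ to pin $\rho_1^*$ and $\rho_2^*$ down to $\rho^*_k$.
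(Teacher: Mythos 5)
Your proposal is correct and takes essentially the same route as the paper: the paper also verifies Assumptions 1--3 for $\ztl$ (and, symmetrically, $\ztu$) --- via the corner minimisers/maximisers over $W_{k-1}(2\d')$, the large deviation estimate of Proposition~\ref{gdpizr}, and the roots $\eta(\b,\e,\rho)$ which converge to $\rho^*_k$ as $\d'\to 0$ --- and then applies Proposition~\ref{polexpbd} to obtain Corollary~\ref{polexpX}. Your explicit check that the limiting drift vanishes exactly at $\rho^*_k$ through the recurrence relation is the computation the paper leaves implicit; the only detail you gloss over is that the extremising value of $\rho_0$ switches at an index $i^*$ (making the limit of $\d^L_{\lfloor sm\rfloor}/\g^L_{\lfloor sm\rfloor}$ piecewise, as in the paper's function $\psi$), which does not affect the argument.
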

We prove this proposition in the next section.
The proof is the same for both the lower and the upper
birth and death chain,
we will therefore show the result for the process $\ztl$ only.
We show now how to complete the inductive step with the help of 
this result.

Let $\d,\d'>0$ with $2\d'<\d$ 
and let $z^0\in\cE_K$ be the starting point of the process.
Thanks to the induction hypothesis,
there exist positive real numbers 
$\a_{k-1},\a'_{k-1},\b_{k-1},\b'_{k-1}$
(depending on $\d'$) 
such that for
$\ell,m$ large enough
and $q$ small enough,
\begin{align*}
\forall z\in\cE_K
&\quad\ 
P\big(\tau(U_{k-1}(\d'))\geq m^{\a_{k-1}}
\,\big|\, 
Z^\t_0=z\big)\,\leq\,
\exp(-\a'_{k-1} m)\,,\\
\forall z\in U_{k-1}(\d')
&\quad\ 
P\big(\tau(U_{k-1}(2\d')^c)\leq \exp(\b_{k-1} m)
\,\big|\, 
Z^\t_0=z\big)\,\leq\,
\exp(-\b'_{k-1} m)\,.
\end{align*}
Let $\a_k>\a_{k-1}$, we have
\begin{multline*}
P\big(
\tau(U_k(\d))\geq m^{\a_k}
\,\big|\, 
Z^\t_0=z^0
\big)\\=\,
P\big(
\tau(U_{k-1}(\d'))\geq m^{\a_{k-1}},
\tau(U_k(\d))\geq m^{\a_k}
\,\big|\, 
Z^\t_0=z^0\big)\\
+P\big(
\tau(U_{k-1}(\d'))< m^{\a_{k-1}},
\tau(U_k(\d))\geq m^{\a_k}
\,\big|\, 
Z^\t_0=z^0\big)\,.
\end{multline*}
By the induction hypothesis the first term in the sum is bounded above by
$\exp(-\a'_{k-1}m)$.
We use the Markov property to control the second term:
\begin{multline*}
P\big(
\tau(U_{k-1}(\d'))< m^{\a_{k-1}},
\tau(U_{k}(\d))\geq m^{\a_k}
\,\big|\, 
Z^\t_0=z^0\big)\\=\,
\sum_{\genfrac{}{}{0 pt}{1}{t<m^{\a_{k-1}}}{z\in U_{k-1}(\d')}}
P\big(
\tau(U_{k-1}(\d'))=t,
Z^\t_t=z,
\tau(U_{k}(\d))\geq m^{\a_k}
\,\big|\, 
Z^\t_0=z^0\big)\\=\,
\sum_{\genfrac{}{}{0 pt}{1}{t<m^{\a_{k-1}}}{z\in U_{k-1}(\d')}}
P\big(
\tau(U_{k-1}(\d'))=t,
Z^\t_t=z
%\tau^*_k\geq m^{\a_k}
\,\big|\, 
Z^\t_0=z^0\big)\\
\times
P\big(
%\tau^*_{k-1}=t,
%Z_t=z,
\tau(U_{k}(\d))\geq m^{\a_k}-t
\,\big|\, 
Z^\t_0=z\big)\,.
\end{multline*} 
Let $m$ be large enough so that
$m^{\a_k}-m^{\a_{k-1}}<\exp(\b_{k-1}m)\,.$
For
$t<m^{\a_{k-1}}$,
\begin{multline*}
P\big(
\tau(U_{k}(\d))\geq m^{\a_k}-t
\,\big|\, 
Z^\t_0=z\big)\,\leq\,
P\big(
%\tau^*_{k-1}=t,
%Z_t=z,
\tau(U_{k}(\d))\geq m^{\a_k}-m^{\a_{k-1}}
\,\big|\, 
Z^\t_0=z\big)\\=\,
P\big(
%\tau^*_{k-1}=t,
%Z_t=z,
\tau(U_{k-1}(2\d')^c)\leq\exp(\b_{k-1}m),
\tau(U_{k}(\d))\geq m^{\a_k}-m^{\a_{k-1}}
\,\big|\, 
Z^\t_0=z\big)\\
+P\big(
%\tau^*_{k-1}=t,
%Z_t=z,
\tau(U_{k-1}(2\d')^c)>\exp(\b_{k-1}m),
\tau(U_{k}(\d))\geq m^{\a_k}-m^{\a_{k-1}}
\,\big|\, 
Z^\t_0=z\big)\,.
\end{multline*}
By the induction hypothesis,
the first term in the sum is bounded above by
$\exp(-\b'_{k-1}m)$.
For the second term we have:
\begin{multline*}
P\big(
%\tau^*_{k-1}=t,
%Z_t=z,
\tau(U_{k-1}(2\d')^c)>\exp(\b_{k-1}m),
\tau(U_{k}(\d))\geq m^{\a_k}-m^{\a_{k-1}}
\,\big|\, 
Z^\t_0=z\big)\\
\leq\,
%P\big(
%\tau(U_{k-1}(2\d')^c)\geq m^\a-m^{\a_{k-1}},
%\tau(U_{k}(\d))\geq m^\a-m^{\a_{k-1}}
%\,\big|\, 
%Z^\t_0=z\big)\\
%\leq\,
P\big(
\tau(U_{k}(\d))\geq m^{\a_k}-m^{\a_{k-1}}
\,\big|\, 
\tau(U_{k-1}(2\d')^c)>\exp(\b_{k-1}m),
%m^\a-m^{\a_{k-1}},
Z^\t_0=z\big)\,.%\\
%=
%P\big(
%%\tau^*_{k-1}=t,
%%Z_t=z,
%\tau(U_{k}(\d))\geq m^\a-m^{\a_{k-1}}
%\big|
%Z^\t_0=z,
%Z^\t_t\in U_{k-1}(2\d'), 0\leq t\leq m^\a-m^{\a_{k-1}}
%\big).
\end{multline*}
Since 
$\exp(\b_{k-1}m)>m^{\a_k}-m^{\a_{k-1}}$
and $2\d'<\d$,
conditionally on the event $\tau(U_{k-1}(2\d')^c)>\exp(\b_{k-1}m)$,
the event $\tau(U_{k}(\d))\geq m^{\a_k}-m^{\a_{k-1}}$
depends only on $Z^\t_t(k)$.
Moreover, by proposition~\ref{domiZ},
$$\forall t\in\{\, 0,\dots,\exp(\b_{k-1}m) \,\}\qquad
Z^L_t\,\leq\,Z^\t_t(k)\,\leq\,Z^U_t\,.$$
Therefore,
\begin{multline*}
P\big(
\tau(U_{k}(\d))\geq m^{\a_k}-m^{\a_{k-1}}
\,\big|\, 
\tau(U_{k-1}(2\d')^c)>\exp(\b_{k-1}m),
Z^\t_0=z\big)\\
\leq\,
P\big(
\tau^L(V_k(\d))\geq m^{\a_k}-m^{\a_{k-1}} 
\,|\,
Z^L_0=z_k
\big)\\+
P\big(
\tau^U(V_k(\d))\geq m^{\a_k}-m^{\a_{k-1}} 
\,|\,
Z^U_0=z_k
\big)\,.
\end{multline*}
Let $\a>0$ be given by proposition~\ref{polexpbdc}.
Choosing $\a_k$ large enough so that 
${m^{\a_k}-m^{\a_{k-1}}>m^\a}$,
this last expression is bounded by 
$2\exp(-\a' m)$ (by proposition \ref{polexpbdc}),
and this yields the desired bound for the hitting time of
$U_k(\d)$.

In order to show the bound on the exit time of
$U_k(2\d)$,
we argue in a similar way.
Let $z^0\in U_k(\d)$ be the starting point of the process.
Let $\b_{k-1}$ be given by the induction hypothesis
and let $\b_k>0$. 
We have
\begin{multline*}
P\big(
\tau(U_k(2\d)^c)\leq \exp(\b_k m)
\,\big|\, 
Z^\t_0=z^0
\big)\,=\\
P\big(
\tau(U_{k-1}(2\d)^c)\leq\exp(\b_{k-1}m),
\tau(U_k(2\d)^c)\leq \exp(\b_k m)
\,\big|\, 
Z^\t_0=z^0\big)\\
+P\big(
\tau(U_{k-1}(2\d)^c)> \exp(\b_{k-1}m),
\tau(U_k(2\d)^c)\leq \exp(\b_k m)
\,\big|\, 
Z^\t_0=z^0\big)\,.
\end{multline*}
By the induction hypothesis,
the first term in the sum is bounded above by
$\exp(-\b'_{k-1}m)$.
For the second term we have:
\begin{multline*}
P\big(\tau(U_{k-1}(2\d)^c)> \exp(\b_{k-1}m),
\tau(U_k(2\d)^c)\leq \exp(\b_k m)
\,\big|\, 
Z^\t_0=z^0\big)\\
%\leq\,
%P\big(\tau(U_{k-1}(2\d)^c)> \exp(\b m),
%\tau(U_k(2\d)^c)\leq \exp(\b m)
%\,\big|\, 
%Z^\t_0=z^0\big)\\
\leq\,
P\big(
\tau(U_k(2\d)^c)\leq \exp(\b_k m)
\,\big|\, 
Z^\t_0=z^0,
\tau(U_{k-1}(2\d)^c)> \exp(\b_{k-1} m)\big)%\\
%=\,
%P\big(
%\tau(U_k(2\d)^c)\leq \exp(\b m)
%\,\big|\, 
%Z^\t_0=z^0,\
%Z^\t_t\in U_{k-1}(2\d')\quad 0\leq t\leq \exp(\b m)
%\big)
\,.
\end{multline*}
%By proposition \ref{polexpk},
%the last expression is bounded by
%$\exp(-\b'_k m)$,
%thus proving the desired bound on the exit time from
%$U_k(2\d)$.
%It remains to proof proposition \ref{polexpk},
%this will be the purpose of the next section.
Let $\b$ be given by proposition~\ref{polexpbdc}, and 
$\b_k>0$ such that $\b_k<\b_{k-1}\wedge\b$.
Then, conditionally on $\tau(U_{k-1}(2\d)^c)> \exp(\b_{k-1} m)$,
the event $\tau(U_k(\d))\leq \exp(\b_k m)$ 
only depends on $Z^\t_t(k)$.
Since 
$\tau(U_{k-1}(2\d)^c)> \exp(\b_{k-1} m)>\exp(\b_k m)$,
by proposition~\ref{domiZ} we have
$$\forall t \in \lbrace\, 0,\dots,\exp(\b_{k-1}m) \,\rbrace\qquad
Z^L_t\,\leq\,Z^\t_t(k)\,\leq\,Z^U_t\,.$$
Therefore,
\begin{multline*}
P\big(
\tau(U_k(2\d)^c)\leq \exp(\b_k m)
\,\big|\, 
Z^\t_0=z^0,
\tau(U_{k-1}(2\d)^c)> \exp(\b_{k-1} m)\big)
\\
\leq\,
P\big(
\tau^L(V_k(2\d)^c)\leq \exp(\b_k m)
\,|\,
Z^L_0=z^0_k
\big)\\
+
P\big(
\tau^U(V_k(2\d)^c)\leq \exp(\b_k m)
\,|\,
Z^U_0=z^0_k
\big)\,
\leq\,
2\exp(-\b' m)\,.
\end{multline*}
This completes the induction step.
%\subsection{Stochastic bounds 
%for $(Z^\t_t(k))_{t\geq 0}$}

\subsection{Dynamics of $\ztl$}
We study here the dynamics of the process
$\ztl$ in order to prove proposition~\ref{polexpbdc}.
%To alleviate the notation we will omit the $L$ throughout the section.
First of all, we look for the points 
$\rho$ in $W_{k-1}(\d')$ 
that minimise and maximise the functions
$\d_i(\rho)$ and $\g_i(\rho)$.
Since we have 
$$\forall l\in\{\,1,\dots,k-1\,\}\qquad
M_H(l,k)\geq M_H(\t,k)\,,$$
the function
$\d_i(\rho_0,\dots,\rho_{k-1})$
is non--decreasing with respect to the variables
$\rho_1,\dots,\rho_{k-1}$.
Likewise,
$\g_i(\rho_0,\dots,\rho_{k-1})$
is non--increasing with respect to the variables
$\rho_1,\dots,\rho_{k-1}$.
Therefore, for all $i\in\zm$,
\begin{align*}
\d^L_i\,&=\,
\min_{\rho_0:|\rho_0-\rho^*_0|<\d'}
\d_i(\rho_0,\rho^*_1-\d',\dots,\rho^*_{k-1}-\d')\,,\\
\g^L_i\,&=\,
\max_{\rho_0:|\rho_0-\rho^*_0|<\d'}
\g_i(\rho_0,\rho^*_1-\d',\dots,\rho^*_{k-1}-\d')\,.
\end{align*}
Let us take the partial derivatives of $\d_i(\rho)$ and $\g_i(\rho)$
with respect to $\rho_0$:
\begin{multline*}
\frac
{\partial \d_i(\rho_0,\dots,\rho_{k-1})}
{\partial \rho_0}
\,=\,
\frac
{1-i/m}
{\big( (\s-1)\rho_0+1 \big)^2}
\bigg(
\s\Big(
M_H(0,k)-M_H(\t,k)
\Big)\\
-(\s-1)\sum_{l=1}^{k-1}\rho_l\Big(
M_H(l,k)-M_H(\t,k)
\Big)-
(\s-1)\frac{i}{m}\Big(
M_H(k,k)-M_H(\t,k)
\Big)
\bigg)\,,
\end{multline*}
\begin{multline*}
\frac
{\partial \g_i(\rho_0,\dots,\rho_{k-1})}
{\partial \rho_0}
\,=\,
\frac
{i/m}
{\big( (\s-1)\rho_0+1 \big)^2}
\bigg(
-\s \Big(
M_H(0,k)-M_H(\t,k)
\Big)+\\
(\s-1)\sum_{l=1}^{k-1}\rho_l\Big(
M_H(l,k)-M_H(\t,k)
\Big)+
(\s-1)\frac{i}{m}\Big(
M_H(k,k)-M_H(\t,k)
\Big)
\bigg)\,.
\end{multline*}
The sign of these partial derivatives does not depend on $\rho_0$.
In particular, for fixed
$\rho_1,\dots,\rho_{k-1}$,
the functions
$\d_i(\rho)$ and $\g_i(\rho)$
are monotone with respect to $\rho_0$.
Furthermore,
the partial derivatives above have opposite signs,
thus
\begin{multline*}
\frac
{\partial \d_i(\rho_0,\dots,\rho_{k-1})}
{\partial \rho_0}
\,=\,0\ \Longleftrightarrow\ 
\frac
{\partial \g_i(\rho_0,\dots,\rho_{k-1})}
{\partial \rho_0}\,=\,0\ \\
\Longleftrightarrow\
\s\Big(
M_H(0,k)-M_H(\t,k)
\Big)
-(\s-1)\sum_{l=1}^{k-1}\rho_l\Big(
M_H(l,k)-M_H(\t,k)
\Big)\\
-(\s-1)\frac{i}{m}\Big(
M_H(k,k)-M_H(\t,k)
\Big)\,=\,0\,.
\end{multline*}
We suppose that
$$\ell\to +\infty\,,\qquad m\to +\infty\,,\qquad q\to 0\,,$$
in such a way that
$${\ell q} \to a\in \,]0,+\infty[\,.$$
We have the following limits for the mutation probabilities:
$$%\displaylines{
\lim_{
\genfrac{}{}{0pt}{1}{\ell\to\infty,\,
q\to 0}
{{\ell q} \to a}
}
\,
M_H(l,k)\,=\,
\begin{cases}
\displaystyle\quad\frac{a^{k-l}}{(k-l)!}\exa &\quad \text{if }\ l\leq k\,,\\
\quad 0 &\quad \text{if }\  l=\t\,.
\end{cases}
$$
For $\ell$ large enough and $q$ small enough,
the coefficient
$M_H(l,k)-M_H(\t,k)$
is positive.
Since the equation $\partial \d_i(\rho_0,\dots,\rho_{k-1})/\partial \rho_0 =0$
is linear with respect to $i$, 
we conclude that there exists an $i^*\in\zm$
(depending on $m,\rho^*_1,\dots,\rho^*_{k-1},\d'$)
such that:

$\bullet$ If $0\leq i\leq i^*$,
the function
$\rho_0\mapsto\d_i(\rho_0,\rho^*_1-\d',\dots,\rho^*_{k-1}-\d')$
is non--increasing,
the function
$\rho_0\mapsto\g_i(\rho_0,\rho^*_1-\d',\dots,\rho^*_{k-1}-\d')$
is non--decreasing,
and
\begin{align*}
\d^L_i\,&=\,
\d_i(\rho^*_0+\d',\rho^*_1-\d',\dots,\rho^*_{k-1}-\d')\,,\\
\g^L_i\,&=\,
\g_i(\rho^*_0+\d',\rho^*_1-\d',\dots,\rho^*_{k-1}-\d')\,.
\end{align*}

$\bullet$ If
$i^*< i\leq m$,
the function
$\rho_0\mapsto\d_i(\rho_0,\rho^*_1-\d',\dots,\rho^*_{k-1}-\d')$
is non--decreasing,
the function
$\rho_0\mapsto\g_i(\rho_0,\rho^*_1-\d',\dots,\rho^*_{k-1}-\d')$
is non--increasing,
and
\begin{align*}
\d^L_i\,&=\,
\d_i(\rho^*_0-\d',\rho^*_1-\d',\dots,\rho^*_{k-1}-\d')\,,\\
\g^L_i\,&=\,
\g_i(\rho^*_0-\d',\rho^*_1-\d',\dots,\rho^*_{k-1}-\d')\,.
\end{align*}

From the definition of $\d_i(\rho)$, $\g_i(\rho)$ we deduce that,
for $m\geq 2$,
\begin{align*}
\forall i\in\lbrace\,0\dots,m-1\,\rbrace\qquad
&\d_i^L\,\geq\,
\frac{\s(\rho^*_0-\d')M_H(0,k)}{m((\s-1)(\rho^*_0+\d')+1)}\,\geq\,
\frac{c}{m}\,,\\
\forall i\in\lbrace\,1\dots,m\,\rbrace\qquad
&\g_i^L\,\geq\,
\frac{\s(\rho^*_0-\d')(1-M_H(0,k))}{m((\s-1)(\rho^*_0+\d')+1)}\,\geq\,
\frac{c}{m}\,,
\end{align*}
where $c$ is a positive constant depending on $k,\d'$ but not on $m$.

We study now the products 
$\pi(i)$, which are defined by
$$\pi(0)\,=\,1\,,\qquad 
\pi(i)\,=\,\frac{\d_1\cdots\d_i}
{\g_1\cdots\g_i},\quad 1<i<m\,.$$
We study first the ratio
${\delta_i(\rho)}/{\gamma_i(\rho)}$.
For $0<i<m$, we have
$$
\frac{\delta_i(\rho)}{\gamma_i(\rho)}\,=\\
\phi\Big(M_H(0,k),\dots,M_H(k,k),M_H(\theta,k),\rho_0,\dots,\rho_{k-1},\frac{i}{m}\Big)\,,
$$
where the function
$\phi:\,
]0,1]^{k+1}\times[0,1[\,\times\,]0,1[^{k}\times\,]0,1[\,\lra \,]0,+\infty[\,$
is given by:

$\forall \b\in \,]0,1]^{k+1}\quad 
\forall \e\in [0,1[\,\quad
\forall \rho\in \,]0,1[^{k}\quad
\forall \eta\in \,]0,1[\,$
\begin{multline*}
\phi(\b,\e,\rho,\eta)\,=\\
\index{$\phi(\b,\e,\rho,\eta)$}
\frac{\displaystyle 
(1-\eta)
\bigg(
\sigma \rho_0\b_0+
\sum_{l=1}^{k-1}\rho_l\b_l+
\eta\b_k+
\Big(
1-\sum_{l=0}^{k-1}\rho_l-\eta
\Big)\e
\bigg)}
{\displaystyle\eta
\bigg(
\sigma \rho_0(1-\b_0)+
\sum_{l=1}^{k-1}\rho_l(1-\b_l)+
\eta(1-\b_k)+
\Big(
1-\sum_{l=0}^{k-1}\rho_l-\eta
\Big)(1-\e)
\bigg)
}\,.
\end{multline*}
In order to understand the behaviour of the products $\pi(i)$,
it is enough to know whether the value of
$\phi$ is larger or smaller than~$1$.
The equation $\phi(\b,\e,\rho,\eta)=1$ 
is linear with respect to $\eta$,
its only root being
$$\eta(\b,\e,\rho)\,=\,
\index{$\eta(\b,\e,\rho)$}
\frac
{\displaystyle
\sigma \rho_0\b_0+
\sum_{l=1}^{k-1}\rho_l\b_l+
\Big(
1-\sum_{l=0}^{k-1}\rho_l
\Big)\e
}
{(\s-1)\rho_0+1-\b_k+\e}\,.$$
Therefore,
\begin{align*}
\phi(\beta,\e,\rho,\eta)
>1 & \quad\text{ if }\quad \eta<\eta(\b,\e,\rho)\,,\cr
\phi(\b,\e,\rho,\eta)
<1 & \quad\text{ if }\quad \eta>\eta(\b,\e,\rho)\,.
\end{align*}
Moreover, the function $\phi(\b,\e,\rho,\eta)$
is continuous and non--decreasing with respect to the variables
$\b,\e$.
Take
$\psi:\,
]0,1]^{k+1}\times[0,1[\,\times\,]0,1[\,\lra \,]0,+\infty[\,$
to be the function defined by:
\begin{multline*}
\forall \b\in \,]0,1]^{k+1}\quad 
\forall \e\in [0,1[\,\quad
\forall \eta\in \,]0,1[\,\\
\psi(\b,\e,\eta)\,=\,
\begin{cases}
\ \,\phi(
\b,
\e,\rho^*_0+\d',\rho^*_1-\d',\dots,\rho^*_{k-1}-\d',\eta)& 
\ \text{if } \eta\leq i^*/m\,,\\
\ \,\phi(
\b,
\e,\rho^*_0-\d',\rho^*_1-\d',\dots,\rho^*_{k-1}-\d',\eta)& 
\ \text{if } \eta> i^*/m\,.
\end{cases}
\end{multline*}
We have the following large deviation estimates for the products $\pi(i)$.
\begin{proposition}\label{gdpizr}
%Let $\rho^*=\rho(\beta,0)$. Let $\eta>0$. For $
Let $a \in \,]0,+\infty[\,$.
For
%$x\in [0,1]^{k}$, 
$\eta\in[0,1]$,
%et pour
%${z=\big(
%\lfloor x_0m \rfloor,\dots,\lfloor x_{k-1}m\rfloor
%\big)}$,
we have
$$
%\lim_{l,m,q\to \infty,\infty,0}
\lim_{
\genfrac{}{}{0pt}{1}{\ell,m\to\infty}
{q\to 0,\,
{\ell q} \to a}
}
\,\frac{1}{m}\ln\pi(%z,
\lfloor\eta m\rfloor)\,=\,
\int_0^{\eta}\ln \psi\Big(
\exa\frac{a^k}{k!},\dots,\exa
,0,s\Big)\,ds\,.
%+
%\int_{y^L}^\rho\ln \phi\Big(
%\exa\frac{a^k}{k!},\dots,\exa
%,0,x_2,s\Big)\,ds\,.
$$
\end{proposition}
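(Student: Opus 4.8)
The plan is to recognise $\frac1m\ln\pi(\lfloor\eta m\rfloor)$ as a Riemann sum and to pass to the limit, exactly as in the proof of Proposition~\ref{ldpi} given in \cite{Cerf}. From the definition of the products $\pi(i)$ associated to the chain $\ztl$,
\[
\frac1m\ln\pi(\lfloor\eta m\rfloor)\,=\,
\frac1m\sum_{j=1}^{\lfloor\eta m\rfloor}\ln\frac{\d^L_j}{\g^L_j}\,.
\]
Then I would invoke the analysis carried out just above the statement: since $\partial\d_i/\partial\rho_0$ and $\partial\g_i/\partial\rho_0$ have opposite signs, and $\d_i$, resp.\ $\g_i$, is monotone in $\rho_1,\dots,\rho_{k-1}$ in opposite senses, the minimum of $\d_i(\rho)$ and the maximum of $\g_i(\rho)$ over $W_{k-1}(2\d')$ are attained at one and the same extremal point, namely $(\rho^*_0+\d',\rho^*_1-\d',\dots,\rho^*_{k-1}-\d')$ when $j\le i^*$ and $(\rho^*_0-\d',\rho^*_1-\d',\dots,\rho^*_{k-1}-\d')$ when $j>i^*$. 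Together with the identity $\d_i(\rho)/\g_i(\rho)=\phi(M_H(0,k),\dots,M_H(k,k),M_H(\theta,k),\rho_0,\dots,\rho_{k-1},i/m)$ and the very definition of $\psi$, this yields
\[
\frac{\d^L_j}{\g^L_j}\,=\,
\psi\Big(M_H(0,k),\dots,M_H(k,k),M_H(\theta,k),\tfrac{j}{m}\Big)\,,
\]
so that $\frac1m\ln\pi(\lfloor\eta m\rfloor)$ is a Riemann sum for $s\mapsto\ln\psi(M_H(0,k),\dots,M_H(k,k),M_H(\theta,k),s)$ on $[0,\eta]$.

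Next I would let $\ell,m\to\infty$, $q\to0$, $\ell q\to a$. Using the limits $M_H(l,k)\to\frac{a^{k-l}}{(k-l)!}\exa$ for $l\le k$ and $M_H(\theta,k)\to0$, the continuity of $\psi$ with respect to its $\b,\e$ arguments, and the uniform lower bounds $\d^L_j,\g^L_j\ge c/m$ established above, I would show that replacing the arguments $M_H(\cdot,k)$ by their limiting values $\exa\frac{a^k}{k!},\dots,\exa,0$ alters the sum by a quantity tending to $0$. It then remains to prove
\[
\lim_{\genfrac{}{}{0pt}{1}{\ell,m\to\infty}{q\to0,\ \ell q\to a}}
\frac1m\sum_{j=1}^{\lfloor\eta m\rfloor}
\ln\psi\Big(\exa\tfrac{a^k}{k!},\dots,\exa,0,\tfrac{j}{m}\Big)
\,=\,
\int_0^{\eta}\ln\psi\Big(\exa\tfrac{a^k}{k!},\dots,\exa,0,s\Big)\,ds\,.
\]

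For this last convergence I would handle the endpoint singularities with care. Because of the factors $(1-\eta)$ in $\d_i$ and $\eta$ in $\g_i$, the limiting integrand behaves like $-\ln s$ near $s=0$ and like $\ln(1-s)$ near $s=1$ (the latter being relevant only when $\eta$ is close to $1$), hence is integrable; moreover it is continuous on $(0,1)$ except possibly at $\lim i^*/m$, where the discontinuity is in fact removable since $\partial\d_i/\partial\rho_0$ vanishes there. I would split $[0,\eta]$ into two boundary strips of width $\e$ and a central part: on the central part the sum converges to the integral by the standard Riemann-sum argument for a bounded, piecewise continuous function, while the contribution of each strip to both the sum and the integral is $O(\e\ln(1/\e))$ uniformly in $m$ (for the sum, $\frac1m\sum_{j\le\e m}\ln(m/j)=\e\ln(1/\e)+O(\e)$ as $m\to\infty$, by Stirling); letting first $m\to\infty$ and then $\e\to0$ gives the claim. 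The main obstacle is precisely this exchange of limit and sum near the logarithmic singularities and at $i^*/m$, but it is carried out verbatim as in the proof of Proposition~\ref{ldpi} in \cite{Cerf}.
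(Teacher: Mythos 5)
Your argument is correct and is essentially the proof the paper has in mind: the paper omits it, pointing to Proposition 9.1 of \cite{Cerf}, which is exactly this Riemann--sum computation, namely writing $\frac{1}{m}\ln\pi(\lfloor\eta m\rfloor)=\frac{1}{m}\sum_{j\le\lfloor\eta m\rfloor}\ln(\d^L_j/\g^L_j)$, identifying $\d^L_j/\g^L_j$ with $\psi$ evaluated at the mutation probabilities and $j/m$ via the extremal corners of $W_{k-1}$, passing to the limiting arguments, and controlling the logarithmic endpoint singularities. Your treatment of the boundary strips via Stirling and of the (removable) switch point at $i^*/m$ matches the intended argument, so no changes are needed.
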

The proof is very similar to that of 
proposition 9.1 of \cite{Cerf}, so we omit it. 
Let us define
\begin{align*}
\rho^-\,=\,
\min\biggr\lbrace\,
\eta\Big(\exa\frac{a^k}{k!},&\dots,\exa,0,\rho^*_0+\d',\rho^*_1-\d',\dots,\rho^*_{k-1}-\d'\Big),\\
&\eta\Big(\exa\frac{a^k}{k!},\dots,\exa,0,\rho^*_0-\d',\rho^*_1-\d',\dots,\rho^*_{k-1}-\d'\Big)
\,\biggr\rbrace\,,
\\
\rho^+\,=\,
\max\biggr\lbrace\,
\eta\Big(\exa\frac{a^k}{k!},&\dots,\exa,0,\rho^*_0+\d',\rho^*_1-\d',\dots,\rho^*_{k-1}-\d'\Big),\\
&\eta\Big(\exa\frac{a^k}{k!},\dots,\exa,0,\rho^*_0-\d',\rho^*_1-\d',\dots,\rho^*_{k-1}-\d'\Big)
\,\biggr\rbrace\,.
\end{align*}
From the definitions, we see that
\begin{align*}
\psi\Big(
\exa\frac{a^k}{k!},\dots,\exa,
0,\eta
\Big)&>1\qquad\text{ for }\ \eta<\rho^-\,,\\
\psi\Big(
\exa\frac{a^k}{k!},\dots,\exa,
0,\eta
\Big)&<1\qquad\text{ for }\ \eta>\rho^+\,.
\end{align*} 
In particular, the function
$$\eta\mapsto
\int_0^{\eta}\ln \psi\Big(
\exa\frac{a^k}{k!},\dots,\exa,
0,s
\Big)\,ds$$
is non--decreasing on
$\,]0,\rho^-[\,$
and non--increasing on 
$\,]\rho^+,1[\,$.
Furthermore, when $\d'$ goes to 0, 
the points $\rho^-$ and $\rho^+$
converge to $\rho^*_k$:
$$\lim_{\d'\to 0}\,\rho^-\,=\,
\lim_{\d'\to 0}\,\rho^+\,=\,
\rho^*_k\,.$$
We also define
\begin{multline*}
\eta^-\,=\\
\min\Big\lbrace\,
\eta\big(M_H(0,k),\dots,M_H(k,k),M_H(\t,k),\rho^*_0+\d',\rho^*_1-\d',\dots,\rho^*_{k-1}-\d'\big),\\
\eta\big(M_H(0,k),\dots,M_H(k,k),M_H(\t,k),\rho^*_0-\d',\rho^*_1-\d',\dots,\rho^*_{k-1}-\d'\big)
\,\Big\rbrace\,,
\end{multline*}
\begin{multline*}
\eta^+\,=\\
\max\Big\lbrace\,
\eta\big(M_H(0,k),\dots,M_H(k,k),M_H(\t,k),\rho^*_0+\d',\rho^*_1-\d',\dots,\rho^*_{k-1}-\d'\big),\\
\eta\big(M_H(0,k),\dots,M_H(k,k),M_H(\t,k),\rho^*_0-\d',\rho^*_1-\d',\dots,\rho^*_{k-1}-\d'\big)
\,\Big\rbrace\,.
\end{multline*}
We then have
\begin{align*}
1\leq i\leq j\leq \eta^- m \qquad
&\Longrightarrow\qquad \pi(i)\,\leq \pi(j)\,,\cr
%m\geq i\geq j\geq\lfloor \rho(\beta,\ve) m\rfloor+1\qquad
\eta^+ m
\leq i\leq j\leq
m
\qquad
&\Longrightarrow\qquad \pi(i)\,\geq \pi(j)\,,
\end{align*}
and the situation between $\eta^-m$ and $\eta^+m$ is somewhat more delicate.
Anyhow, when  $\ell,m\to\infty$, $q\to 0$ and $\ell q\to a$, we have
$$\lim_{
\genfrac{}{}{0pt}{1}{\ell,m\to\infty}
{q\to 0,\,
{\ell q} \to a}
} \eta^-\,=\,\rho^-\,,\qquad
\lim_{
\genfrac{}{}{0pt}{1}{\ell,m\to\infty}
{q\to 0,\,
{\ell q} \to a}
} \eta^+\,=\,\rho^+\,.$$
For $\d>0$, we set
$$V_k(\d)\,=\,
\Big\lbrace\,
i\in\zm:
\Big|
\frac{i}{m}-\rho^*_k
\Big|<\d
\,\Big\rbrace\,,$$ 
and we define the hitting time of a subset $A\subset\zm$ by
$$\tau^L(A)\,=\,
\inf\big\lbrace\,
t\geq 0 : Z^L_t\in A
\,\big\rbrace\,.$$
We recall that the definition of $\ztl$ depends on the parameter $\d'>0$.
The above results show that the birth and death Markov chain
$\ztl$
verifies assumptions 1,2,3 of section \ref{Bdchains},
we can therefore apply proposition \ref{polexpbd} to the process $\ztl$
and we obtain the following result.
\begin{corollary}\label{polexpX}
Let $\d>0$.
There exist positive real numbers 
$\a_k,\a_k',\b_k,\b'_k$
(depending on $\d,\d'$)
such that for 
$\ell,m$ large enough and
$q,\d'$ small enough,
\begin{align*}
\forall i\in\zm\qquad
&P\big(
\tau^L(V_k(\d))\geq m^{ \a_k}
\,\big|\,
Z^L_0=i
\big)\,\leq\,
\exp(-\a'_k m)\,,\\
\forall i\in V_k(\d)\qquad
&P\big(
\tau^L(V_k(2\d)^c)<\exp(\b_k m)
\,\big|\,
Z^L_0=i
\big)\,\leq\,
\exp(-\b_k' m)\,.
\end{align*}
\end{corollary}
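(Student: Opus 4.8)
The plan is simply to verify that, for each fixed $\d'>0$, the birth and death Markov chain $\ztl$ belongs to the class of chains treated in Section~\ref{Bdchains}, that is, to check that Assumptions~1, 2 and~3 hold with $\rho^*$ taken to be $\rho^*_k$; then Corollary~\ref{polexpX} follows at once by applying Proposition~\ref{polexpbd} to $\ztl$. All the analytic ingredients needed for this verification have already been assembled above, so the argument is mostly a matter of collecting them in the right form.

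First I would invoke the lower bounds on the transition probabilities established just above, namely $\d^L_i\geq c/m$ for $0\leq i\leq m-1$ and $\g^L_i\geq c/m$ for $1\leq i\leq m$, valid for $m$ large, $\ell$ large and $q$ small, with $c>0$ depending on $k$ and $\d'$ but not on $m$; this is exactly Assumption~1 with the integer there equal to $1$. Next, Proposition~\ref{gdpizr} gives the large deviation limit $\frac1m\ln\pi(\lfloor\eta m\rfloor)\to f_{\d'}(\eta)$, where $f_{\d'}(\eta)=\int_0^\eta\ln\psi(\exa a^k/k!,\dots,\exa,0,s)\,ds$, and we have already observed that $f_{\d'}$ is increasing on $\,]0,\rho^-[\,$, decreasing on $\,]\rho^+,1[\,$, that $\rho^-\leq\rho^*_k\leq\rho^+$, that $\rho^*_k$ does not depend on $\d'$, and that $\rho^-\to\rho^*_k$ and $\rho^+\to\rho^*_k$ as $\d'\to0$; this is precisely Assumption~2 with $\rho_1^*=\rho^-$ and $\rho_2^*=\rho^+$. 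Finally, the monotonicity statements for the products $\pi(i)$ proved above, namely $\pi(i)\leq\pi(j)$ for $1\leq i\leq j\leq\eta^- m$ and $\pi(i)\geq\pi(j)$ for $\eta^+ m\leq i\leq j\leq m$, together with $\eta^-\to\rho^-$ and $\eta^+\to\rho^+$ in the regime $\ell,m\to\infty$, $q\to0$, $\ell q\to a$, give Assumption~3 with $r_1=\eta^-$ and $r_2=\eta^+$.

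With the three assumptions in hand, Proposition~\ref{polexpbd} applied to $\ztl$ yields directly the two estimates claimed in Corollary~\ref{polexpX}, with constants $\a_k,\a'_k,\b_k,\b'_k$ depending on $\d$ and $\d'$. The point I would be most careful about is the role of the auxiliary parameter $\d'$: it is exactly the parameter $\d'$ of Section~\ref{Bdchains}, the interval $[\eta^- m,\eta^+ m]$ on which the behaviour of $\pi$ is not controlled must shrink to the single point $\rho^*_k$ as $\d'\to0$ (this is the content of $\rho^-,\rho^+\to\rho^*_k$), and one must keep track of which of the extremal points of $W_{k-1}(2\d')$ realises the minimum, respectively the maximum, of $\d_i(\rho)$ and $\g_i(\rho)$, which is where the threshold index $i^*$ enters. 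The corresponding statement for $\ztu$ is obtained in the same way, interchanging the roles of these extremal points.
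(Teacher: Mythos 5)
Your proposal matches the paper's own argument: the paper proves Corollary \ref{polexpX} exactly by checking that $\ztl$ satisfies Assumptions 1, 2, 3 of section \ref{Bdchains} (via the bounds $\d^L_i,\g^L_i\geq c/m$, Proposition \ref{gdpizr} with the function $\psi$, and the monotonicity of $\pi$ up to $\eta^- m$ and beyond $\eta^+ m$ with $\eta^\pm\to\rho^\pm\to\rho^*_k$), and then applying Proposition \ref{polexpbd}. Your identification of $r_1=\eta^-$, $r_2=\eta^+$, $\rho_1^*=\rho^-$, $\rho_2^*=\rho^+$ and of the role of $\d'$ and $i^*$ is precisely what the paper does, so the proof is correct and essentially identical.
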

Thus the estimates of proposition \ref{polexpbdc} 
for the lower process $\ztl$ are proved.

\subsection{Convergence}\label{Convergence}
In this section we will prove that when $\s\exa>1$,
the invariant probability measure
$\nu^\t$
converges to the Dirac mass at
$\rho^*$.
Let $a$ such that $\s\exa>1$.
Let $\rho^*$ be the point of $[0,1]^{K+1}$ given by:
$$
\forall k\geq 0\qquad
\rho^*_k\,=\,
(\s\exa-1)
\frac{a^k}{k!}
\sum_{i\geq 1}
\frac{i^k}{\s^i}\,.
$$
We consider the asymptotic regime
$$\displaylines{
\ell\to +\infty\,,\qquad m\to +\infty\,,\qquad q\to 0\,,\cr
{\ell q} \to a\,,
\qquad\frac{m}{\ell}\to\alpha\,.}$$
\begin{theorem}\label{conv}
For every continuous and increasing function
$f:\pml\ra\R$
such that $f(0)=0$,
we have
$$\lim_{
\genfrac{}{}{0pt}{1}{\ell,m\to\infty,\,
q\to 0
}
{{\ell q} \to a,\,
\frac{\scriptstyle m}{\scriptstyle \ell}\to\alpha
}
}\,
\int_{\textstyle\cE_K}
f\Big(
\frac{z_0+\cdots+z_K}{m} 
\Big)
d\nu^\t(z)\,=\,
f(\rho^*_0+\cdots+\rho^*_K)\,.
$$
\end{theorem}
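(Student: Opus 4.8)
The plan is to start from the renewal expression for $\nu^\t$ obtained at the end of Section~\ref{Bounds}: since $p^\t(0,z^\t)=1$,
$$\int_{\cE_K}f\Big(\frac{z_0+\cdots+z_K}{m}\Big)\,d\nu^\t(z)\,=\,
\frac{E\big(\int_0^{\tau_0}f\big((Z^\t_s(0)+\cdots+Z^\t_s(K))/m\big)\,ds\,\big|\,Z^\t_0=z^\t\big)}{1+E\big(\tau_0\,\big|\,Z^\t_0=z^\t\big)}\,,$$
with $\tau_0=\inf\{n\ge0:Z^\t_n(0)=0\}$, which is almost surely finite. Write $c^*=f(\rho^*_0+\cdots+\rho^*_K)$ and $B=\sup_{[0,1]}|f|<\infty$. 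Fix $\e>0$; by continuity of $f$ choose $\d\in\,]0,\rho^*_0/2[\,$ small enough that $|f((z_0+\cdots+z_K)/m)-c^*|<\e$ whenever $z\in U_K(2\d)$. For $\ell,m$ large we have $m(\rho^*_0-2\d)>1$, hence $U_K(2\d)\subset\{z\in\cE_K:z_0\ge1\}$; in particular $\Ztt$ cannot reach $\{z_0=0\}$ while inside $U_K(2\d)$.

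First I would bound the denominator from below. By Theorem~\ref{polexp} with $k=K$, from any starting point the chain enters $U_K(\d)$ before time $m^{\a_K}$ outside an event of probability $\le\exp(-\a'_Km)$; iterating this with the Markov property exactly as in the proof of Proposition~\ref{polexpbd} gives $E(\tau(U_K(\d))\,|\,Z^\t_0=z)\le 2m^{\a_K}$ for all $z\in\cE_K$ and $\ell,m$ large. Combining the entrance estimate with the second estimate of Theorem~\ref{polexp} and the strong Markov property at $\tau(U_K(\d))$, the chain started from $z^\t$ remains in $U_K(2\d)$ throughout $[\tau(U_K(\d)),\tau(U_K(\d))+\exp(\b_Km)]$ with probability tending to $1$; since $z_0\ge1$ on $U_K(2\d)$ this forces $\tau_0\ge\exp(\b_Km)$ with probability tending to $1$, so $E(\tau_0\,|\,Z^\t_0=z^\t)\ge\tfrac14\exp(\b_Km)$ for $\ell,m$ large. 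In particular $E(\tau_0)\to\infty$, so $E(\tau_0)/(1+E(\tau_0))\to1$.

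The heart of the proof is to show that the time $R$ spent by $\Ztt$ outside $U_K(2\d)$ before $\tau_0$ is negligible, i.e. $E(R\,|\,Z^\t_0=z^\t)/\big(1+E(\tau_0\,|\,Z^\t_0=z^\t)\big)\to0$. I would decompose $[0,\tau_0]$ into alternating stretches: set $T_1=\tau(U_K(\d))$ and recursively let $S_j$ be the first exit from $U_K(2\d)$ after $T_j$ and $T_{j+1}$ the first return to $U_K(\d)$ after $S_j$, stopping after the random number $J$ of stretches completed before $\tau_0$ (with $J<\infty$ a.s.). On each interval $[T_j,S_j]$ the chain lies in $U_K(2\d)$ and contributes nothing to $R$, while on $[0,T_1]$ and on the bad excursions $[S_j,T_{j+1}\wedge\tau_0]$ the indicator is $\le1$, so $R\le T_1+\sum_{j\ge1}L_j1_{\{J\ge j\}}$, where $L_j$ is the hitting time of $U_K(\d)$ started from $Z^\t_{S_j}$. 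By the strong Markov property at the $S_j$ and the uniform bound of the previous paragraph, $E(L_j1_{\{J\ge j\}})\le 2m^{\a_K}P(J\ge j)$, whence $E(R)\le 2m^{\a_K}(1+E(J))$. Symmetrically $\tau_0\ge\sum_{j=1}^J(S_j-T_j)$, and the second estimate of Theorem~\ref{polexp} applied at each $T_j\in U_K(\d)$ gives $E\big((S_j-T_j)1_{\{J\ge j\}}\big)\ge\tfrac12\exp(\b_Km)P(J\ge j)$, so $E(\tau_0)\ge\tfrac12\exp(\b_Km)E(J)$; finally $E(J)\ge P(J\ge1)=P(T_1<\tau_0)\to1$. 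Combining,
$$\frac{E(R\,|\,Z^\t_0=z^\t)}{1+E(\tau_0\,|\,Z^\t_0=z^\t)}\,\le\,\frac{2m^{\a_K}\big(1+E(J)\big)}{\tfrac12\exp(\b_Km)E(J)}\,\le\,12\,m^{\a_K}\exp(-\b_Km)\,\longrightarrow\,0\,.$$

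To conclude, on the good times $s$ (those with $Z^\t_s\in U_K(2\d)$) the integrand differs from $c^*$ by less than $\e$, so pathwise $\big|\int_0^{\tau_0}f\,ds-c^*\tau_0\big|\le\e\,\tau_0+(B+|c^*|)R$; taking expectations, dividing by $1+E(\tau_0\,|\,Z^\t_0=z^\t)$ and using the two displays above yields $\limsup\big|\int f\,d\nu^\t-c^*\big|\le\e$ in the asymptotic regime, and letting $\e\to0$ proves the theorem. I expect the delicate point to be the excursion bookkeeping of the third paragraph — the Wald-type estimate $E\big(\sum_jL_j1_{\{J\ge j\}}\big)\le 2m^{\a_K}E(J)$ together with the matching lower bound $E(\tau_0)\ge\tfrac12\exp(\b_Km)E(J)$ — everything else being a direct combination of Theorem~\ref{polexp} with the renewal identity.
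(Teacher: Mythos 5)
Your route --- the renewal identity over a single excursion $[0,\tau_0]$ started at $z^\t$, together with the excursion bookkeeping --- is genuinely different from the paper's proof, which instead applies the ergodic theorem over a horizon $t\to\infty$ and controls the number of completed cycles $N(t)$ by a Cram\'er--Chernoff bound. Your Wald-type estimates $E(R)\le 2m^{\a_K}(1+E(J))$ and $E(\tau_0)\ge\tfrac12\exp(\b_K m)E(J)$ are sound. The genuine gap is the claim that, started from $z^\t$, the chain reaches $U_K(\d)$ before $\tau_0$ with probability tending to $1$ (hence ``$\tau_0\ge\exp(\b_K m)$ with probability tending to $1$'' and ``$E(J)\ge P(T_1<\tau_0)\to 1$''). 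This is false: $z^\t$ has $z_0=1$, and the coordinate $Z^\t_t(0)$ is a birth-and-death chain started at $1$ whose up/down ratio near $0$ is approximately $\s\exa$; by a gambler's-ruin computation it hits $0$ before taking off with probability of order $1/(\s\exa)$, bounded away from $0$. Theorem \ref{polexp} cannot exclude this, because under $p^\t$ the chain is reborn at $z^\t$ upon hitting $\lbrace z_0=0\rbrace$, so $\tau(U_K(\d))\le m^{\a_K}$ with high probability is perfectly compatible with $\tau_0<\tau(U_K(\d))$ occurring with non-vanishing probability.

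This gap matters quantitatively: your final display requires $E(J)$ bounded below (if $E(J)$ were of order $\exp(-\b_K m)$, the right-hand side would be of order $m^{\a_K}$, not $o(1)$), and the step $E(\tau_0)\to\infty$ needs it as well. What is true, and would repair the argument, is that $P\big(\tau(U_K(\d))<\tau_0\,\big|\,Z^\t_0=z^\t\big)$ is bounded below by a positive constant uniformly in the asymptotic regime; but this needs a separate ingredient --- a gambler's-ruin estimate for the coordinate-$0$ birth-and-death chain via the products $\pi(i)$ of section \ref{Bdchains} (or the corresponding estimates of section 9 of \cite{Cerf}), combined with corollary \ref{polexp0} and theorem \ref{polexp} to get all $K+1$ coordinates into $U_K(\d)$ before extinction --- which your proposal neither states nor proves. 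Note that the paper's ergodic-theorem route avoids any such lower bound: over a long horizon the chain restarts after each extinction, and only the per-cycle comparison between polynomial bad stretches and exponential good stretches is needed.
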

\begin{proof}
Let $\d>0$ 
and let us define
$$
U_K(\d)\,=\,
\Big\lbrace\,
z\in\cE_K
:
\Big|
\frac{z_k}{m}-\rho^*_k
\Big|
<\d,\quad
0\leq k\leq K
\,\Big\rbrace\,.
$$
We define two sequences of stopping times 
$(T_n)_{n\geq0}$ and 
$(T^*_n)_{n\geq1}$ as follows.
Let
$T_0=0$ and set
\begin{align*}
&T^*_1 \,=\,\inf\,\big\{\,t\geq 0: 
Z^\t_t\in U_K(\d)
\,\big\}\,,\ \,
&&T_1 \,=\,\inf\,\big\{\,t\geq T^*_1: 
Z^\t_t\not\in U_K(2\d)
\,\big\}\,,
\cr
&\,\,\,\vdots
&&
\,\,\,\vdots
\cr
&T^*_k \,=\,\inf\,\big\{\,t\geq T_{k-1}: 
Z^\t_t\in U_K(\d)
\,\big\}\,,\ \,
&&T_k \,=\,\inf\,\big\{\,t\geq T^*_k: 
Z^\t_t\not\in U_K(2\d)
\,\big\}\,,
\cr
&\,\,\,\vdots
&&
\,\,\,\vdots
\end{align*}
The ergodic theorem for Markov chains implies that
$$
\int_{\textstyle\cE_K}
f\Big(
\frac{z_0+\cdots+z_K}{m} 
\Big)
d\nu^\t(z)\,=\,
\lim_{t\ra\infty}\,
\frac{1}{t}
E\bigg(
\sum_{i=0}^t
f\bigg(
\frac{Z^\t_i(0)+\cdots+Z^\t_i(K)}{m}
\bigg)
\bigg)\,.
$$
Let $t\geq 0$.
We decompose this last sum as follows:
\begin{multline*}
\sum_{i=0}^t
f\bigg(
\frac{Z^\t_i(0)+\cdots+Z^\t_i(K)}{m}
\bigg)\,=\,
\sum_{n\geq 1}\,
\sum_{i=T_{n-1}\wedge t}^{T^*_n\wedge t-1}\,
f\bigg(
\frac{Z^\t_i(0)+\cdots+Z^\t_i(K)}{m}
\bigg)
\\+\,
\sum_{n\geq 1}\,
\sum_{i=T^*_{n}\wedge t}^{T_n\wedge t-1}\,
f\bigg(
\frac{Z^\t_i(0)+\cdots+Z^\t_i(K)}{m}
\bigg)\,.
\end{multline*}
The function $f$ is continuous.
Let $\e>0$ 
and let us choose $\d$ 
small enough so that 
$$
\forall z\in U_K(\d)\qquad
\bigg|
f\Big(
\frac{z_0+\cdots+z_K}{m}
\Big)\,-\,
f(\rho^*_0+\cdots+\rho^*_K)
\bigg|\,<\,
\frac{\e}{2}\,.$$
We have then
\begin{multline*}
\bigg|
E\bigg(
\sum_{i=0}^t
f\bigg(
\frac{Z^\t_i(0)+\cdots+Z^\t_i(K)}{m}
\bigg)\bigg)\,-\,
t f(\rho^*_0+\cdots+\rho^*_K)
\bigg|\,\leq\\
\sum_{n\geq 1}
2f(1)
E\big(
T_n^*\wedge t-T_{n-1}\wedge t
\big)+
\frac{t\e}{2}\,.
\end{multline*}
Next we study the expression
$$\sum_{n\geq 1}
\big(
T_n^*\wedge t-T_{n-1}\wedge t
\big)\,.$$
Let us define
$$N(t)\,=\,
\max\big\lbrace\,
n\geq 0
:
T_n\leq t
\,\big\rbrace\,.$$
We can rewrite the previous sum as follows
$$
\sum_{n\geq 1}
\big(
T_n^*\wedge t-T_{n-1}\wedge t
\big)\,\leq\,
\sum_{n=1}^{N(t)}
\big(
T_n^*-T_{n-1}
\big)
+\big(
t-T_{N(t)}
\big)\,.
$$
We study now the random variable 
$N(t)$. 
Let
$n\in\N$, $b>0$ and $z\in\cE_K$. 
More precisely, we week estimates on the following probability: 
$$P\big(
N(n\exp(b m)/2)\geq n
\,\big|\,
Z^\t_0=z
\big)\,.$$
From the definition of $N(t)$, it follows that
$N(t)\geq n$ if and only if
$T_{n}\leq t$.
%Moreover,
%$$T_{n}\,=\,
%T_1+(T_2-T_1)+\cdots+(T_{n}-T_{n-1})\,.$$
%Let us define for $i\geq 1$,
%$$Y_i\,=\,T_i-T_{i-1}\,,\qquad
%Y^*_i\,=\, T_i-T^*_i\,.$$
%We also define, for $n\geq 1$,
%$$S_n\,=\,
%\sum_{i=1}^n Y_i\,.$$
Thus
$$P\big(
N(n\exp(b m)/2)\geq n
\,\big|\,
Z^\t_0=z
\big)\,=\,
P\big(
T_{n}\leq n\exp(b m)/2
\,\big|\,
Z^\t_0=z
\big)\,.$$
Let us define for $i\geq 1$,
$$Y_i\,=\,T_i-T_{i-1}\,,\qquad
Y^*_i\,=\, T_i-T^*_i\,.$$
By theorem \ref{polexp},
there exist positive real numbers
$\b$ et $\b'$
such that, for all $i\geq 1$,
\begin{multline*}
P\big(
Y_i\leq \exp(\b m)
\,\big|\,
Z^\t_0=z
\big)\,\leq\,
P\big(
Y^*_i\leq \exp(\b m)
\,\big|\,
Z^\t_0=z
\big)\\
=\,
\sum_{z'\in U_K(\d)}
P\big(
Y^*_i\leq \exp(\b m)
\,\big|\,
Z^\t_0=z, Z^\t_{T^*_i}=z'
\big)P\big(
Z^\t_{T^*_i}=z'
\,\big|\,
Z^\t_0=z
\big)\\
=\,
\sum_{z'\in U_K(\d)}
P\big(
T_1\leq \exp(\b m)
\,\big|\,
Z^\t_0=z'
\big)P\big(
Z^\t_{T^*_i}=z'
\,\big|\,
Z^\t_0=z
\big)
\\
\leq\,
\exp(-\b' m)\,.
\end{multline*}
Let us define the following
Bernoulli random variables:
$$\forall i\geq 1,\qquad
\e_i\,=\,
1_{Y^*_i\leq \exp(\b m)}\,.
$$
Notice that 
$$T_n\,=\,
Y_1+\cdots+Y_n\,\geq\,
Y_1^*+\cdots+Y_n^*\,.$$
If
$T_{n}\leq n\exp(\b m)/2$,
then there exist  $n/2$ indices in $\{\,1,\dots,n\,\}$
such that $Y_i^*\leq \exp(\b m)$. Therefore,
$$T_{n}\,\leq\, \frac{1}{2}n\exp(\b m)\ \Lra\  \e_1+\cdots+\e_{n}\,\geq\, \frac{n}{2}\,.$$
Thus,
\begin{multline*}
P\Big(
N\big(n\exp(\b m)/2\big)
\geq n
\,\Big|\,
Z^\t_0=z
\Big)\,=\\
P\Big(
T_{n}
\leq n\exp(\b m)/2
\,\Big|\,
Z^\t_0=z
\Big)\,\leq\,
P\Big(
\e_1+\cdots+\e_{n}
\geq n/2
\Big)\,.
\end{multline*}
Let $\l\geq 0$, thanks to Chebyshev's exponential inequality we have
$$P\Big(
\e_1+\cdots+\e_{n}
\geq n/2
\Big)\,\leq\,
\exp\Big(-\l/2+\ln
E\big(
\exp(\l\e_1/n)\cdots\exp(\l\e_n/n)
\big)
\Big)\,.
$$
Since
$\e_1,\dots,\e_{n-1}$
are measurable with respect to
${\big(
Z^\t_t,\ 0\leq t\leq T^*_n
\big)}$,
\begin{multline*}
E\big(
\exp(\l\e_1/n)\cdots\exp(\l\e_n/n)
\big)\\
=\,
E\Big(
E\big(
\exp(\l\e_1/n)\cdots\exp(\l\e_n/n)
\,|\,
Z^\t_t,\ 0\leq t\leq T_n^*
\big)
\Big)\\
=\,
E\Big(
\exp(\l\e_1/n)\cdots\exp(\l\e_{n-1}/n)
E\big(
\exp(\l\e_n/n)
\,|\,
Z^\t_t,\ 0\leq t\leq T_n^*
\big)
\Big)\,.
\end{multline*}
Thanks to the strong Markov property,
the above conditional expectation can be rewritten as follows:
$$
E\big(
\exp(\l\e_n/n)
\,|\,
Z^\t_t,\ \, 0\leq t\leq T_n^*
\big)
\,=\,
E\big(
\exp(\l\e_1/n)
\,|\,
Z^\t_0=Z^\t_{\tau^*_n}
\big)\,.$$
Yet, for all $z'\in U_K(\d)$,
$$
E\big(
\exp(\l\e_1/n)
\,|\,
Z^\t_0=z'
\big)
\,\leq\,
\exp\Big(-\b'm+\frac{\l}{n}\Big)+1-\exp(-\b'm)\,.
$$
%thus,
%$$
%E\big(
%\exp(\l\e_1/n)
%\,|\,
%Z^\t_0=Z^\t_{\tau^*_n}
%\big)
%\,\leq\,
%\exp\Big(-\a'm+\frac{\l}{n}\Big)+1-\exp(-\a'm)\,.
%$$
We iterate this procedure and we obtain
$$
E\big(
\exp(\l\e_1/n)\cdots\exp(\l\e_n/n)
\big)\,\leq\,
\bigg(
\exp\Big(-\b'm+\frac{\l}{n}\Big)+1-\exp(-\b'm)
\bigg)^n\,.
$$
The change of variables $\l\to n\l$ yields
\begin{multline*}
P\Big(
\e_1+\cdots+\e_{n}
\geq n/2
\Big)\\
\leq\,
\exp\bigg(
-n\Big(
\l/2-\ln\big(
\exp
(-\b'm+\l)+1-\exp(-\b'm)
\big)
\Big)
\bigg)
\,.
\end{multline*}
Let
$\Lambda^*(t)$
be the Cram\'er transform of the Bernoulli law with parameter ${p=\exp(-\b'm)}$:
$$
\Lambda^*(t)\,=\,
\sup_{\l\geq 0}\,
\Big(
\l t-
\ln
\big(
pe^\l+1-p
\big)
\Big)\,.
$$
Optimising the previous inequality over $\l\geq 0$,
we obtain
$$P\Big(
\e_1+\cdots+\e_{n}
\geq n/2
\Big)\,\leq\,
\exp\big(
-n\L^*(1/2)
\big)\,.$$
We can rewrite the function $\L^*(t)$ as
$$
\Lambda^*(t)\,=\,
%t\ln\frac{t(1-p)}{(1-t)p}
%-\ln\Big(
%\frac{t(1-p)}{1-t}+1-p
%\Big)
t\ln\frac{t}{p}
+(1-t)\ln\frac{1-t}{1-p}
\,.$$
In our particular case, for $m$ large enough,
$$\Lambda^*(1/2)\,=\,
\frac{1}{2}\ln\frac{\exp(\b' m)}{2}
+\frac{1}{2}\ln\frac{\exp(\b' m)}{2(\exp(\b' m)-1)}
\,\geq\,
c(m)\,,$$
where
$c(m)$ is a positive constant depending on $m$
but not on $n$.
It follows that for $m$ large enough,
$$
\forall n\geq 1\qquad
P\Big(
N\big(n\exp(\b m)/2\big)
\geq n
\,\Big|\,
Z^\t_0=z
\Big)
\,\leq\,
\exp\big(
-nc(m)
\big)\,.
$$
Let $t\geq 0$.
We seek next an upper bound for the expectation
$$
E\bigg(
\sum_{k=1}^{N(t)}
\big(
T_k^*-T_{k-1}
\big)
+\big(
t-T_{N(t)}
\big)
\bigg)\,.
$$
The sum inside the parenthesis is at most $t$,
therefore, for $n\geq 1$
\begin{multline*}
E\bigg(
\sum_{k=1}^{N(t)}
\big(
T_k^*-T_{k-1}
\big)
+\big(
t-T_{N(t)}
\big)
\bigg)\,\leq\\
E\bigg(
\Big(
\sum_{k=1}^{N(t)}
\big(
T_k^*-T_{k-1}
\big)
+\big(
t-T_{N(t)}
\big)\Big)
1_{
N(t)<n}
\bigg)\,+\,
tP\big(
N(t)\geq n
\big)\,.
\end{multline*}
Let
$$n_t\,=\,
\min\,\Big\lbrace\,
n\in\N
:
t\leq
\frac{n\exp(\b m)}{2}
\,\Big\rbrace\,.$$
On one hand, the analysis of the random variable
$N(n\exp(\b m)/2)$
shows that taking $n=n_t$,
the second term is bounded by
$$
\frac{n_t}{2}e^{\b m}
P\big(
N(n_t e^{\b m}/2)\geq n_t
\big)\,\leq\,
\frac{n_t}{2}\exp(\b m-n_tc(m))\,,
$$
which goes to $0$
when $t$ goes to $\infty$.
On the other hand,
we can bound the first term 
thanks to theorem~\ref{polexp}:
\begin{multline*}
E\bigg(
\Big(
\sum_{k=1}^{N(t)}
\big(
T_k^*-T_{k-1}
\big)
+\big(
t-T_{N(t)}
\big)\Big)
1_{
N(t)<n_t}
\bigg)\\
\leq\,
E\bigg(
\sum_{k=1}^{n_t+1}
\big(
T_k^*-T_{k-1}
\big)
\bigg)\,=\,
\sum_{k=1}^{n_t+1}
E\big(
T_k^*-T_{k-1}
\big)\,\leq\,
(n_t+1)\frac{m^\a}{1-\exp(-\a'm)}\,,
\end{multline*}
where $\a,\a'>0$.
We combine the above inequalities, and we obtain for $m$ large enough 
and for all $t>0$,
\begin{multline*}
\frac{1}{t}E\bigg(
\sum_{n\geq 1}
\big(
T^*_n\wedge t
-T_{n-1}\wedge t
\big)
\bigg)\\
\leq\,
\frac{2}{(n_t-1)\exp(\b m)}
\bigg(
\frac{n_t}{2}\exp(\b m -n_tc(m))
+(n_t+1)\frac{m^\a}{1-\exp(-\a'm)}
\bigg)\,.
\end{multline*}
When $t$ goes to $\infty$ this expression goes to
$2m^\a/\exp(\b m)(1-\exp(-\a'm))$, which in turn goes to $0$ with $m$.
We deduce that, for
$m$ large enough,
there exists $t_m>0$
such that for all $t\geq t_m$,
$$\Bigg|
\frac{1}{t}
E\bigg(
\sum_{i=0}^t
f\Big(
\frac{Z^\t_i(0)+\cdots+Z^\t_i(K)}{m}
\Big)
\bigg)
-f(\rho^*_0+\cdots+\rho^*_K)
\Bigg|\,<\,\e\,.$$ 
Thus,
$$\lim_{
\genfrac{}{}{0pt}{1}{\ell,m\to\infty,\,
q\to 0
}
{{\ell q} \to a,\,
\frac{\scriptstyle m}{\scriptstyle \ell}\to\alpha
}
}\,
\int_{\textstyle\cE_K}
f\Big(
\frac{z_0+\cdots+z_K}{m} 
\Big)
d\nu^\t(z)\,=\,
f(\rho^*_0+\cdots+\rho^*_K)\,.
$$
\end{proof}

\section{The neutral phase}
Throughout this  section we take $a$ such that $\s\exa<1$.
Let $\mu_O$ be the invariant probability measure of the process $\Ot$
and let $\nu_K$ be the image measure of $\mu_O$ through the map
$$o\in\pml\longmapsto
\frac{1}{m}\big(
o(0)+\cdots+o(K)\big)\in[0,1]\,.$$
We will prove that when $\s\exa<1$,
the probability measure $\nu_K$ 
converges to the Dirac mass at 0.
In order to prove the convergence of $\nu_K$ to $\d_0$,
we slightly modify the occupancy process $\Ot$.
Let $\cW_K$ be the set of the occupancy distributions
having at least one individual in the Hamming classes
$0,\dots,K$, i.e.,
$$\cW_K\,=\,\big\{\,o\in\pml:o(0)+\cdots+o(K)\geq 1\,\big\}
\index{$\cW_K$}\,,
$$
and let $\cN_K$ be the set of the occupancy distributions
having no individuals in the Hamming classes
$0,\dots,K$, i.e.,
$$\cN_K
\,=\,\big\{\,o\in\pml:o(0)+\cdots +o(K)=0\,\big\}\,.
\index{$\cN_K$}
$$
Let us define the occupancy distribution $o_{\text{exit}}$
by:
$$\forall l\in\zl\qquad
o_{\text{exit}}(l)\,=\,\begin{cases}
\quad m & \quad \text{if }\ l=K+1\,,\\
\quad 0 & \quad \text{otherwise}\,.
\end{cases}$$
Let $\Phi_O$ be the coupling map
defined in section 7.1 of \cite{Cerf}.
We define a new coupling map $\Phi_\T$ 
by setting for $o\in\pml$ and $r\in\cR$,
\begin{equation*}
\Phi_\T(o,r)\,=\,
\begin{cases}
\quad 
\oe
&\text{if }o\in\cW_K \,\,{\text{ and }}\,\, 
\Phi_O(o,r)\in\cN_K\,, \\
\quad 
\Phi_{O}(o,r)
&\text{otherwise}\,.
\end{cases}
\end{equation*}
Since for all $o\in\cN_K$ we have $o\preceq\oe$, 
the map $\Phi_\T$ is above the map $\Phi_O$
in the following sense:
$$
\forall r\in \cR\quad 
\forall o\in\pml\qquad
\Phi_O(o,r)\,\preceq\,\Phi_\T(o,r)\,.
$$
Thus, we can build an upper process $\Tt$
with the help of the i.i.d. sequence of random vectors $(R_n)_{n\geq 0}$,
such that if the processes $\Ot$, $\Tt$
both start from the same occupancy distribution $o$,
then
$$\forall t\geq 0\qquad
O_t\,\preceq\, \T_t\,.$$
Let $\mu_\T$
be the invariant probability measure of the process $\Tt$.
We fix a non--decreasing function 
$f:[0,1]\lra\R$ such that $f(0)=0$.
We have
$$\int_{[0,1]} f\, d\nu_K\,\leq\,
\int_{\pml} f\Big(
\frac{o(0)+\cdots+o(K)}{m}
\Big)\,d\mu_\T(o)\,.$$
We apply now the renewal result of proposition~\ref{renewal}
to the process $\Tt$,
the set $\cW_K$, the function
${o\longmapsto f\big(\big(\ozk\big)/m\big)}$,
and the occupancy distribution $\oe$.
Set
\begin{align*}
\tau^*_K\,&=\,
\inf\lbrace\,
t\geq 0 : \T_t\in\cW_K
\,\rbrace\,,\\
\tau_K\,&=\,
\inf\lbrace\,
t\geq \tau^*_K : \T_t=\oe
\,\rbrace\,.\\
\end{align*}
We have then
\begin{multline*}
\int_{\textstyle\pml}
f\bigg( \frac{\ozk }{m} \bigg)
\,d\mu_\T(o)
\,=\\
\frac{
\displaystyle
E\bigg(\int_0^{\tau_K}
f\bigg( \frac{ 
\T_s(0)+\cdots+\T_s(K)
 }{m} \bigg)
\,ds\,\Big|\,
\T_0=\oe
\bigg)
}{
\displaystyle
E\big(\tau_K\,|\,
\T_0=\oe
\big)}\,.
\end{multline*}
Since $f(0)=0$, we have
$$\int_0^{\tau_K}
f\bigg( \frac{ 
\T_s(0)+\cdots+\T_s(K)
 }{m} \bigg)
\,ds\,=\,
\int_{\tau_K^*}^{\tau_K}
f\bigg( \frac{ 
\T_s(0)+\cdots+\T_s(K)
 }{m} \bigg)
\,ds\,.
$$
Moreover, since $f$ is non--decreasing,
\begin{multline*}
\int_{\textstyle\pml}
f\bigg( \frac{\ozk }{m} \bigg)
\,d\mu_\T(o)
\,\leq\\
\frac{
\displaystyle
f(1)
E\big(\tau_K-\tau_K^*\,|\,
\T_0=\oe
\big)
}{
\displaystyle
E\big(\tau_K^*\,|\,
\T_0=\oe
\big)
+
E\big(\tau_K-\tau_K^*\,|\,
\T_0=\oe
\big)
}\,.
\end{multline*}
Our aim is to show that the ratio of the right--hand side of this expression
converges to $0$.
We call $\tau^*_K$
the discovery time of the first $K+1$ Hamming classes,
and $\tau_K-\tau^*_K$
the persistence time of the first $K+1$ Hamming classes.
Let $o\in\cN_K$.
Starting from any occupancy distribution $o$,
the processes $\Ot$ and $\Tt$
both have the same dynamics until time $\tau_K$.
Thus, it is enough to estimate the expectations of $\tau^*_K$
and $\tau_K-\tau^*_K$
for the original occupancy process $\Ot$.
With a slight abuse of notation, we set
\begin{align*}
\tau^*_K\,&=\,
\inf\lbrace\,
t\geq 0 : O_t\in\cW_K
\,\rbrace\,,\\
\tau_K\,&=\,
\inf\lbrace\,
t\geq \tau^*_K : O_t=\oe
\,\rbrace\,.\\
\end{align*}
We deduce from the last inequality that
$$
\int_{[0,1]}
f
\,d\nu_K
\,\leq\,
\frac{
\displaystyle
f(1)
E\big(\tau_K-\tau_K^*\,|\,
O_0=\oe
\big)
}{
\displaystyle
E\big(\tau_K^*\,|\,
O_0=\oe
\big)
+
E\big(\tau_K-\tau_K^*\,|\,
O_0=\oe
\big)
}\,.
$$
We estimate next the expectations appearing in the above inequality.
\begin{proposition}\label{decouverte}
For all $o\in\cN_K$, we have
$$\lim_{
\genfrac{}{}{0pt}{1}{\ell,m\to\infty,\,
q\to 0
}
{{\ell q} \to a,\,
\frac{\scriptstyle m}{\scriptstyle \ell}\to\alpha
}
}\,
\frac{1}{\ell}
\ln
E\big(
\tau^*_K
\,\big|\,
O_0=o
\big)\,=\,
\ln\k\,.
$$
\end{proposition}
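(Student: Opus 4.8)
Throughout, $\tau^*_K=\inf\{t\ge 0:O_t\in\cW_K\}$ and $o\in\cN_K$, and I write $B_K=\{u\in\cA^\ell:d_H(u,w^*)\le K\}$, so that $\cW_K$ is the event that some chromosome of the population lies in $B_K$; note that $|B_K|=\sum_{j=0}^K\binom{\ell}{j}(\kappa-1)^j$ is polynomial in $\ell$ for $K$ fixed, hence $\kappa^{o(\ell)}$. Two soft facts will be used repeatedly. First, as long as $\Ot$ stays in $\cN_K$ all fitnesses $A_H$ equal $1$, so up to time $\tau^*_K$ the dynamics is that of the \emph{neutral} occupancy process (no selection); for the latter each locus performs an ergodic chain of mixing time of order $\ell$, and a standard lineage argument shows that after a number of steps $t_0$ polynomial in $\ell$ the law of any one chromosome is within total variation distance $\kappa^{-2\ell}$ of the uniform law on $\cA^\ell$. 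Second, the mutation kernel $M(u,v)$ depends only on $d_H(u,v)$, hence is doubly stochastic, so that $\sum_u\kappa^{-\ell}M(u,B_K)=\sum_{v\in B_K}\kappa^{-\ell}=|B_K|\kappa^{-\ell}$; equivalently, writing $\mu_{\mathrm{eq}}$ for the law of $d_H(\cdot,w^*)$ under the uniform measure and $M_H(h,B_K):=\sum_{c\le K}M_H(h,c)$, one has $\sum_h\mu_{\mathrm{eq}}(h)M_H(h,B_K)=|B_K|\kappa^{-\ell}$.

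\textbf{Upper bound.} Since $\{o:o(0)\ge1\}\subseteq\cW_K$ we have $\tau^*_K\le\tau^*$, the discovery time of the master sequence, hence $E(\tau^*_K\mid O_0=o)\le E(\tau^*\mid O_0=o)$; the estimate $\frac1\ell\ln E(\tau^*\mid O_0=o)\to\ln\kappa$ of \cite{Cerf} gives $\limsup\frac1\ell\ln E(\tau^*_K\mid O_0=o)\le\ln\kappa$. (One may also argue directly with the renewal identity of Proposition~\ref{renewal} applied to the neutral process: the stationary rate of reproductions landing in $B_K$ equals $|B_K|\kappa^{-\ell}(1+o(1))$ by the doubly stochastic identity, whence $E(\tau^*_K)=\kappa^{\ell}|B_K|^{-1}(1+o(1))$ up to the polynomial relaxation cost.)

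\textbf{Lower bound.} Fix $\varepsilon>0$ and put $T=\lfloor\kappa^{\ell(1-\varepsilon)}\rfloor$; since $E(\tau^*_K\mid O_0=o)\ge T\,P(\tau^*_K>T\mid O_0=o)$ it suffices to bound $P(\tau^*_K>T\mid O_0=o)$ below by a positive constant, and for this I would write
$$P(\tau^*_K>T\mid O_0=o)\ \ge\ P(\tau^*_K>t_0\mid O_0=o)-P(t_0<\tau^*_K\le T\mid O_0=o).$$
The last term is bounded, by Boole and the Markov property, by $\sum_{t=t_0+1}^{T}E\big[\tfrac1m\textstyle\sum_h O_{t-1}(h)M_H(h,B_K)\big]$; for $t-1\ge t_0$ the numbers $E[O_{t-1}(h)/m]$ differ from $\mu_{\mathrm{eq}}(h)$ by at most $\kappa^{-2\ell}$, so by the doubly stochastic identity each summand is at most $|B_K|\kappa^{-\ell}+(\ell+1)\kappa^{-2\ell}\le2|B_K|\kappa^{-\ell}$, and the sum is at most $2T|B_K|\kappa^{-\ell}=2|B_K|\kappa^{-\varepsilon\ell}\to0$. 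For the first term I would classify each reproduction before time $t_0$ as \emph{relevant} if its parent lies at Hamming distance at most $K+L$ (a constant $L$ to be chosen) and \emph{remote} otherwise. A relevant reproduction enters $B_K$ with conditional probability $M_H(h,B_K)$ for some $h\in\{K+1,\dots,K+L\}$, which is maximised at $h=K+1$ and equals $C/\ell$ for a constant $C$; since the mutation randomness is fresh at each step, the number of relevant reproductions entering $B_K$ in $[1,t_0]$ is stochastically dominated by $\mathrm{Bin}(R,C/\ell)$, where $R$ is the total number of relevant reproductions. A remote reproduction from distance $h\ge K+L+1$ enters $B_K$ with probability at most $D(C/\ell)^{\,h-K}\le D(C/\ell)^{L+1}$, so, choosing $L$ so large that $t_0=o(\ell^{L+1})$, the probability that some remote reproduction enters $B_K$ before $t_0$ is at most $t_0\,D(C/\ell)^{L+1}\to0$. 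It remains to control $R$: $E(R\mid O_0=o)=\frac1m\sum_{h=K+1}^{K+L}\sum_{t<t_0}E[O_t(h)\mid O_0=o]$, and the crucial point is that the classes neighbouring $w^*$ are depleted in $O(\ell)$ steps — because mutation drifts chromosomes away from $w^*$, the aggregate near‑occupancy $\sum_{h=K+1}^{K+L}O_t(h)$ is, up to a geometrically small feed from the farther classes, a supermartingale decaying at rate $\Theta(1/m)$ per step, so $\sum_{t<t_0}E[O_t(h)\mid O_0=o]=O(\ell^2)$ uniformly in $o\in\cN_K$ and $E(R\mid O_0=o)=O(\ell)$. By Markov's inequality $P(R\le M\ell)\ge\tfrac12$ for a suitable constant $M$, hence $P(\text{no relevant reproduction enters }B_K\mid O_0=o)\ge E[(1-C/\ell)^R]\ge\tfrac12(1-C/\ell)^{M\ell}\ge c>0$ uniformly in $\ell$ and $o$. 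Combining the two estimates, $P(\tau^*_K>t_0\mid O_0=o)\ge c-o(1)$ and therefore $P(\tau^*_K>T\mid O_0=o)\ge c/2$ for $\ell$ large, which yields $\liminf\frac1\ell\ln E(\tau^*_K\mid O_0=o)\ge(1-\varepsilon)\ln\kappa$; letting $\varepsilon\downarrow0$ finishes the proof.

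\textbf{Main obstacle.} The heart of the matter is the depletion estimate $\sum_{t<t_0}E[O_t(h)\mid O_0=o]=O(\ell^2)$ for $h\in\{K+1,\dots,K+L\}$, i.e.\ that the Hamming classes adjacent to $w^*$ empty out in $O(\ell)$ steps while contributing only $O(1)$ expected reproductions into $B_K$; this is exactly where one must quantify the away‑from‑$w^*$ drift of the neutral dynamics on the occupancy process, and it is the analogue, for the ball $B_K$ rather than the single point $w^*$, of the transient estimates of \cite{Cerf}, which I would adapt. All the other ingredients — the polynomial relaxation of the one‑chromosome marginal, the doubly stochastic identity, and the two Boole bounds — are routine, and in each of them the cardinality $|B_K|$ enters only as a subexponential factor, leaving the exponential rate equal to $\ln\kappa$.
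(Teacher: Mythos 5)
Your upper bound is exactly the paper's (domination $\tau^*_K\le\tau^*$ plus the estimate for the discovery time of the master sequence from \cite{Cerf}), but the lower bound contains genuine gaps, concentrated precisely in the steps you label ``routine'' and ``the main obstacle''. First, the depletion estimate $\sum_{t<t_0}E\big(O_t(h)\,|\,O_0=o\big)=O(\ell^2)$ is false for the worst admissible starting points: the proposition is claimed for every $o\in\cN_K$, in particular for the occupancy distribution with all $m$ chromosomes in class $K+1$ (which is exactly the state the synthesis later starts from). There the class $K+1$ empties at rate of order $(1-e^{-a})/m$ per step, so $\sum_{t}E\big(O_t(K+1)\big)$ is of order $m^2$, not $\ell^2$; hence $E(R)$ is of order $m$, and your bound $P(\text{no relevant reproduction enters the classes }0,\dots,K)\ge\tfrac12(1-C/\ell)^{M\ell}$ becomes $e^{-\Theta(m/\ell)}$. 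Since the regime allows $m/\ell\to\alpha=+\infty$, this is not bounded below by a constant, and the claim $P(\tau^*_K>t_0\,|\,O_0=o)\ge c>0$ uniformly in $o\in\cN_K$ is wrong: from such a start the expected number of back-mutations into the classes $0,\dots,K$ during the transient is of order $m/\ell$, so discovery within $O(m)$ steps occurs with probability tending to $1$ when $\alpha=\infty$. Second, your two requirements on $t_0$ are incompatible: for the law of a single chromosome to come within $\kappa^{-2\ell}$ of uniform, each lineage must undergo of order $\ell^2$ reproduction events, and a fixed individual is replaced only about once every $m$ steps, so $t_0\gtrsim m\ell^2$ (not ``polynomial in $\ell$'', since $m$ may be superpolynomial in $\ell$); meanwhile the remote-reproduction union bound needs $t_0=o(\ell^{L+1})$ for a fixed constant $L$, and no choice of $t_0$ satisfies both once $m$ grows faster than every power of $\ell$.

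The paper's route avoids both problems by never asking for a constant lower bound on $P(\tau^*_K>T)$ and never invoking equilibration of the one-chromosome marginal. It passes to the distance process, reduces to the worst start $(K+1)^m$ by monotonicity (corollary 8.6 of \cite{Cerf}), forbids all inward mutations during the first $m\ell^{3/4}$ steps via the explicit event $\cE$, whose probability $(1-p/\kappa)^{m\ell^{3/4}\ln\ell}$ may tend to $0$ but only subexponentially in $\ell$ (all that matters, since only $\frac{1}{\ell}\ln E(\tau^*_K)$ is at stake), and then controls the hitting of the classes $0,\dots,K$ from a fixed distance $b$ through lemma~\ref{premhit},
$$P\big(\tau^*_K\le n\,|\,D_0=(b)^m\big)\,\le\, nm\,\frac{\cB(0)+\cdots+\cB(K)}{\cB(b)}\,,$$
applied first with $b=\ln\ell$, $n=m\ell^2$ and then with $b=\lk(1-\e')$, $n=\kappa^{\ell(1-\e)}$. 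This lemma plays the role of your stationarity identity, but it is valid from a fixed starting distance with no burn-in, which is exactly the missing ingredient; without it, your scheme, even after correcting $E(R)$ to $O(m)$, only yields the result when $\alpha<\infty$.
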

\begin{proof}
Let $\tau^*$
be the discovery time of the master sequence,
i.e.,
$$\tau^*\,=\,
\inf\big\lbrace\,
t\geq 0
:
O_t(0)\geq 1
\,\big\rbrace\,.$$
Obviously,
$\tau^*\geq\tau^*_K$.
Moreover, thanks to propositions 10.3 and 10.6 of \cite{Cerf},
we know that for any occupancy distribution
$o$
such that
$o(0)=0$, we have
$$\lim_{
\genfrac{}{}{0pt}{1}{\ell,m\to\infty,\,
q\to 0
}
{{\ell q} \to a,\,
\frac{\scriptstyle m}{\scriptstyle \ell}\to\alpha
}
}\,
\frac{1}{\ell}
\ln
E\big(
\tau^*
\,\big|\,
O_0=o
\big)\,=\,
\ln\k\,.
$$
We immediately deduce the upper bound for $\tau^*_K$:
$$
\forall o\in\cN_K\qquad
\limsup_{
\genfrac{}{}{0pt}{1}{\ell,m\to\infty,\,
q\to 0
}
{{\ell q} \to a,\,
\frac{\scriptstyle m}{\scriptstyle \ell}\to\alpha
}
}\,
\frac{1}{\ell}
\ln
E\big(
\tau^*_K
\,\big|\,
O_0=o
\big)\,\leq\,
\ln\k\,.
$$
We next prove the lower bound 
following the strategy proposed in \cite{Cerf} 
to estimate the discovery time of the master sequence.
We work with the distance process
$\Dt$
introduced in chapter 7 of \cite{Cerf}.
We set
\begin{align*}
\cN_K\,&=\,
\big\lbrace\,
d\in\zl^m
:
d(i)>K,\ 1\leq i\leq m
\,\big\rbrace\,,
\\[3pt]
\cW_K \,&=\,\big\lbrace\,
d\in\zl^m
:\exists\, i\in\{\, 1,\dots,m \,\} \text{ such that }
d(i)\leq K
\,\big\rbrace
\,.
\end{align*}
Let
$$\tau^*_K\,=\,
\inf\big\lbrace\,
t\geq 0
:
D_t\in\cW_K
\,\big\rbrace\,.$$
The law of the discovery time $\tk$
is the same for the distance process and for the occupancy process.
Let $b\in\zl$ and let us denote by
$(b)^m$ the column vector whose entries are all $b$:
$$(b)^m\,=\,\left(
\begin{matrix}
b\\
\vdots\\
b
\end{matrix}
\right)
\,.$$
On the sharp peak landscape
the distance process restricted to the set $\cN$
is monotone (corollary 8.6 of \cite{Cerf}).
Thus,
$$
\forall d\in\cN_K\qquad
E\big(
\tk
\,\big|\,
D_0=d
\big)\,\geq\,
E\big(
\tk
\,\big|\,
D_0=(K+1)^m
\big)\,.
$$
In order to estimate this last expectation
we use lemmas 11.4 and 11.5 of \cite{Cerf},
which we state next.
We denote by $\cB$ 
the binomial law with parameters 
$\ell$ and $1-1/\k$.
\begin{lemma}\label{binborn}
%For $b\leq\lk$, we have
For $b\leq \ell/2$, we have
$$\frac{1}{\kappa^\ell}
\left(\frac{\ell}{2b}\right)^b\,\leq\,
\cB(b)\,\leq\,\frac{\ell^b}{\kappa^{\ell-b}}\,.$$
\end{lemma}
%\begin{proof} Soit $b\leq\ell/2$. Alors
%%\begin{multline*}
%$$\cB(b)\,=\,
%\binom{\ell}{b}
%\Big(1-\frac{1}{\kappa}\Big)^b
%\Big(\frac{1}{\kappa}\Big)^{\ell-b}
%\,\geq\,
%\binom{\ell}{b}
%\frac{1}{\kappa^\ell}
%\,\geq\,
%\frac{(\ell -b)^b}{b^b}
%\frac{1}{\kappa^\ell}
%\,\geq\,
%\left(\frac{\ell}{2b}\right)^b
%\frac{1}{\kappa^\ell}\,.
%$$
%La majoration de
%$\cB(b)$ est immédiate.
%\end{proof}
\begin{lemma}\label{bingd}
For $\rho\in[0,1]$, we have
$$\lim_{\ell\to\infty}
\,\frac{1}{\ell}\ln\cB(\lfloor \rho \ell\rfloor)\,=\,
-(1-\rho)\ln\big(\kappa(1-\rho)\big)
-\rho\ln
\frac{\kappa\rho}{\kappa-1}\,.
%\Big(\big(1-\frac{1}{\kappa}\big)\rho\Big)\,.
$$
\end{lemma}
%\begin{proof} 
%Nous écrivons
%\begin{multline*}
%\ln\cB(\lfloor \rho \ell\rfloor)\,=\,
%\ln\frac{\ell\cdots(\ell-\lfloor \rho\ell\rfloor +1)}
%{1\cdots
%\lfloor \rho \ell\rfloor}
%+
%(\ell-\lfloor \rho \ell\rfloor)\ln\frac{1}{\kappa}
%+\lfloor \rho \ell\rfloor\ln\big(1-\frac{1}{\kappa}\big)
%\cr
%\,=\,
%\sum_{k=0}^{\lfloor \rho \ell\rfloor-1}
%\ln\Big(1-\frac{k}{\ell}\Big)
%-
%\sum_{k=1}^{\lfloor \rho \ell\rfloor}
%\ln\frac{k}{\ell}
%+
%(\ell-\lfloor \rho \ell\rfloor)\ln\frac{1}{\kappa}
%+\lfloor \rho \ell\rfloor\ln\big(1-\frac{1}{\kappa}\big)\,.\hfil
%\end{multline*}
%Nous reconnaissons des sommes de Riemann pour les fonctions $\ln(1-x)$ et $\ln x$, donc
%$$\lim_{\ell\to\infty}
%\,\frac{1}{\ell}\ln\cB(\lfloor \rho \ell\rfloor)\,=\,
%\int_0^\rho\ln\frac{1-x}{x}\,dx
%+(1-\rho)\ln\frac{1}{\kappa}
%+\rho \ln\big(1-\frac{1}{\kappa}\big)\,.$$
%Nous finissons la preuve en faisant l'intégration.
%\end{proof}
The following result is a variation of lemma 10.15 of \cite{Cerf}.
The proof is similar and we omit it.
\begin{lemma}\label{premhit}
For 
$b\in\{\, K+1,\dots,\ell \,\}$, we have
$$\forall n\geq 0\qquad P\big(
\tau_K^*\leq n
\,|\,
D_{0}=(b)^m
\big)
\,\leq\,
nm
\frac{\cB(0)+\cdots+\cB(K)}{\cB(b)}\,.
$$
\end{lemma}
We show next the lower bound on the hitting time
$\tau^*_K$:
$$\liminf_{
\genfrac{}{}{0pt}{1}{\ell,m\to\infty,\,
q\to 0
}
{{\ell q} \to a,\,
\frac{\scriptstyle m}{\scriptstyle \ell}\to\alpha
}
}\,
\frac{1}{\ell}
\ln
E\big(
\tau^*_K
\,\big|\,
D_0=(K+1)^m
\big)\,\geq\,
\ln\k\,.
$$
Let $\cE$ be the event given by
$$\cE\,=\,\big\{\,
\forall n\leq m\ltq\quad\forall l\leq\ln\ell\quad
U_{n,l}>p/\kappa\,\big\}\,.$$
If the event $\cE$ happens, then until time
$m\ltq$, none of the mutation events in the process
$(D_n)_{n\geq 0}$ can create a chromosome in one of the classes
$0,\dots,K$. Indeed,
on $\cE$,
\begin{multline*}
\forall b\in\kl\quad
\forall n\leq m\ltq
\qquad\\[3pt]
\cMH(b,U_{n,1},\dots,U_{n,\ell})\,\geq\,
\cMH(K+1,U_{n,1},\dots,U_{n,\ell})\cr
\,\geq\,
K+1+\sum_{l=K+2}^\ell1_{U_{n,l}>1-p(1-1/\kappa)}\,\geq\,K+1\,.
\end{multline*}
Thus, on the event $\cE$, we have $\tau_K^*\geq
m\ltq$.
The probability of $\cE$ is
$$
P(\cE)\,=\,\Big(1-\frac{p}{\kappa}\Big)^{
m\ltq\ln\ell}\,.$$
Let $\e>0$.
Let us suppose that the process starts at
$(K+1)^m$
and let us estimate the probability
\begin{multline*}
P\big(\tau_K^*>
{\kappa^{\ell(1-\e)}}\big)\,\geq\,
P\big(\tau_K^*>
{\kappa^{\ell(1-\e)}},\,\cE\big)\cr
\,\geq\,
%P(\cE)\,P(\tau^*\geq
%{\kappa^{\ell(1-\ve)}}\,|\,\cE)
P\Big(\forall t\in\{\,m\ltq,\dots,
{\kappa^{\ell(1-\e)}}\,\}\quad
D_t\in\cN_K,\,\cE\Big)\cr
\,=\,\sum_{d\in\cN_K}
P\Big(\forall t\in\{\,m\ltq,\dots,
{\kappa^{\ell(1-\e)}}\,\}\quad
D_t\in\cN_K,\,
D_{m\ltq}=d,\,\cE\Big)\cr
\,\geq\,\sum_{d\geq(\ln \ell)^m}
P\Big(\forall t\in\{\,m\ltq,\dots,
{\kappa^{\ell(1-\e)}}\,\}\quad
D_t\in\cN_K\,|\,
D_{m\ltq}=d,\,\cE\Big)\cr
\hfill
\times
P( D_{m\ltq}=d
,\,\cE)\,.\qquad\qquad \big(\Sigma_1\big)
\end{multline*}
Using the Markov property we obtain
\begin{multline*}
P\Big(\forall t\in\{\,m\ltq,\dots,
{\kappa^{\ell(1-\e)}}\,\}\quad
D_t\in\cN_K\,|\,
D_{m\ltq}=d,\,\cE\Big)
\cr
\,=\,
P\Big(
\forall t\in\{\,0,\dots,
{\kappa^{\ell(1-\e)}}
-m\ltq
\,\}\quad
D_t\in\cN_K\,|\,
D_{0}=d\Big)\cr
\,=\,
P\Big(
\tau_K^*>
{\kappa^{\ell(1-\e)}}
-m\ltq
\,|\,
D_{0}=d\Big)
\,\geq\,
P\Big(
\tau_K^*>
{\kappa^{\ell(1-\e)}}
\,|\,
D_{0}=d\Big)
\,.
\end{multline*}
In the neutral case the distance process is monotone (corollary 8.6 of \cite{Cerf}).
Thus,
for all $d\geq(\ln \ell)^m$,
$$
P\big(
\tau_K^*>
{\kappa^{\ell(1-\e)}}
%-m\ltq
\,|\,
D_{0}=d\big)\,\geq\,
P\big(
\tau_K^*>
{\kappa^{\ell(1-\e)}}
%-m\ltq
\,|\,
D_{0}=
(\ln \ell)^m
\big)\,.
$$
Therefore, we can rewrite inequality~$(\Sigma_1)$ as follows:
\begin{multline*}
P\big(\tau_K^*>
{\kappa^{\ell(1-\e)}}\big)
\,\geq\,
\cr
P\big(
\tau_K^*>
{\kappa^{\ell(1-\e)}}
%-m\ltq
\,|\,
D_{0}=
(\ln \ell)^m
\big)
\,P\big( 
D_{m\ltq}\geq
(\ln \ell)^m
,\,\cE\big)\,.
\quad\,
\big(\circ\big)
\end{multline*}
Since the distance process is monotone, we have
\begin{multline*}
P\big( 
D_{m\ltq}\geq
(\ln \ell)^m
,\,\cE
\,|\,
D_0=(K+1)^m
\big)
\,\geq\\
P\big( 
D_{m\ltq}\geq
(\ln \ell)^m
,\,\cE
\,|\,
D_0=(1)^m
\big)\,.
\end{multline*}
We also have the following estimate (section 10.5 of \cite{Cerf}):
\begin{multline*}
\,P\big( 
D_{m\ltq}\geq
(\ln \ell)^m
,\,\cE
\,|\,
D_0=(1)^m
\big)\,\geq\qquad\qquad\qquad\big(\bigtriangledown\big)\\
\Big(
1-
m\exp\Big(-\frac{1}{3}(\ln\ell)^2\Big)\Big)
\,\Big(1-\frac{p}{\kappa}\Big)^{
m\ltq\ln\ell}
\,.
\end{multline*}
We estimate next
$$
P(
\tau_K^*>
{\kappa^{\ell(1-\e)}}
%-m\ltq
\,|\,
D_{0}=
(\ln \ell)^m
)\,.$$
Let $\e'>0$.
We have
\begin{multline*}
P\Big(
\tau_K^*>
{\kappa^{\ell(1-\e)}}
%-m\ltq
\,\big|\,
D_{0}=
(\ln \ell)^m
\Big)
\cr
\,=\,
P\Big(
%\forall t\in
%\{\,\ell^2m,\dots,
%{\kappa^{\ell(1-\ve)}}
%-m\ltq\,\}\quad
%D_t\in\cN,\,
\tau_K^*>m\ell^2 ,\,
D_t\in\cN_K\text{ for }
m\ell^2\leq t\leq
{\kappa^{\ell(1-\e)}}
%-m\ltq\,\}\quad
\,\big|\,
D_{0}=
(\ln \ell)^m
\Big)\cr
\,=\,\sum_{d\in\cN_K}
P\Big(
\begin{matrix}
\tau_K^*>m\ell^2 ,\, D_{m\ell^2}=d\\
D_t\in\cN_K\text{ for }
m\ell^2\leq t\leq
{\kappa^{\ell(1-\e)}}
\end{matrix}
\,\Big|\,
D_{0}=
(\ln \ell)^m
\Big)\cr
\geq\,
\sum_{d\geq (\lk(1-\e'))^m}
P\Big(
D_t\in\cN_K\text{ for }
m\ell^2\leq t\leq
{\kappa^{\ell(1-\e)}}
\,\big|\,
\tau_K^*>m\ell^2 
,\,
D_{m\ell^2}=d
\Big)
\cr
\hfill \times
P\Big(
\tau_K^*>m\ell^2, \,
D_{m\ell^2}=d\,
\big|\,
D_{0}=
(\ln \ell)^m
\Big)\,.\qquad\qquad\big(\Sigma_3\big)
\end{multline*}
The Markov property and the monotonicity of the process $(D_t)_{t\geq 0}$ 
give for
${d\geq (\lk(1-\e'))^m}$,
\begin{multline*}
P\Big(
D_t\in\cN_K\text{ for }
m\ell^2\leq t\leq
{\kappa^{\ell(1-\e)}}
\,\big|\,
\tau_K^*>m\ell^2 
,\,
D_{m\ell^2}=d
\Big)
\cr
\,=\,
P\Big(
\forall t\in\{\,0,\dots,
{\kappa^{\ell(1-\e)}}
-m\ell^2
%\tau^*>
%{\kappa^{\ell(1-\ve)}}
%-m\ltq
\,\}\quad
D_t\in\cN_K\,\big|\,
D_{0}=d\Big)\cr
\,=\,
P\Big(
\tau_K^*>
{\kappa^{\ell(1-\e)}}
-m\ell^2
\,\big|\,
D_{0}=d\Big)
\,\geq\,P\Big(
\tau_K^*>
{\kappa^{\ell(1-\e)}}
\,\big|\,
D_{0}=d\Big)
\cr
\,\geq\,
P\big(
\tau_K^*>
{\kappa^{\ell(1-\e)}}
\,\big|\,
D_{0}=
(\lk(1-\e'))^m
\big)\,.
\end{multline*}
Reporting back into the inequality~$(\Sigma_3)$,
\begin{multline*}
P\Big(\tau_K^*>
{\kappa^{\ell(1-\e)}}
\,\big|\,
D_{0}=(\ln\ell)^m\Big)
\,\geq\,
P\big(
\tau_K^*>
{\kappa^{\ell(1-\e)}}
\,\big|\,
D_{0}=
(\lk(1-\e'))^m
\big)
\cr\hfill\times
P\Big(
\tau_K^*>m\ell^2 
,\,
D_{m\ell^2}\geq
(\lk(1-\e'))^m
\,\big|\,
D_{0}=
(\ln \ell)^m
\Big)\,.\qquad\big(\heartsuit\big)
\end{multline*}
{\bf Estimation of
$P(
\tau_K^*>m\ell^2 
,\,
D_{m\ell^2}\geq
(\lk(1-\e'))^m
\,|\,
D_{0}=
(\ln \ell)^m
)$\,.}
We write
\begin{multline*}
P\Big(
\tau_K^*>m\ell^2 
,\,
D_{m\ell^2}\geq
(\lk(1-\e'))^m
\,\big|\,
D_{0}=
(\ln \ell)^m
\Big)\,\geq\,
\qquad\qquad\big(\natural\big)
\cr
P\Big(
D_{m\ell^2}\geq
(\lk(1-\e'))^m
\,\big|\,
D_{0}=
(\ln \ell)^m
\Big)-
P\Big(
\tau_K^*\leq m\ell^2 
\,\big|\,
D_{0}=
(\ln \ell)^m
\Big)\,.
\end{multline*}
We control the last term by applying lemma~\ref{premhit} with $n=m\ell ^2$
and $b=\ln\ell$:
$$P\big(
\tau_K^*\leq m\ell^2
\,|\,
D_{0}=(\ln\ell)^m
\big)
%\left(
%\begin{matrix}
%b\\
\,\leq\,
(m\ell) ^2 
\frac{\cB(0)+\cdots+\cB(K)}{\cB(\ln\ell)}\,.
$$
%We estimate the right--hand side with the help of 
Using lemma~\ref{binborn} we get
$$\frac{\cB(0)+\cdots+\cB(K)}{\cB(\ln\ell)}\,
\leq\,
\frac{1-(\ell\k)^{K+1}}{1-\ell\k}
\Big(\frac{2\ln\ell}{\ell}\Big)^{\ln\ell}
\,.
$$
Thus,
$$P\big(
\tau^*\leq m\ell^2
\,|\,
D_{0}=(\ln l)^m
\big)\,
\leq\,
(m\ell) ^2 
\frac{1-(\ell\k)^{K+1}}{1-\ell\k}
\Big(\frac{2\ln\ell}{\ell}\Big)^{\ln\ell}
\,.
\qquad\big(\flat\big)
$$
An estimate from section 10.5 of \cite{Cerf} will help us control the other term:
there exists a constant
$c(\e')>0$
such that for $\ell$ large enough, we have
$$
P\Big(
D_{m\ell^2}\geq
(\lk(1-\e'))^m
\,\big|\,
D_{0}=
(\ln \ell)^m
\Big)\,\geq\,
1-m\exp{
\big(
-\frac{1}{2}c(\e')\ell\big)}
\,.
$$
This inequality together with estimates~$(\natural)$ and~$(\flat)$ give
\begin{multline*}
P\Big(
\tau_K^*>m\ell^2 
,\,
D_{m\ell^2}\geq
(\lk(1-\e'))^m
\,\big|\,
D_{0}=
(\ln \ell)^m
\Big)\,\geq\,\qquad\big(\clubsuit\big)\cr
1-m\exp{
\big(
-\frac{1}{2}c(\e')\ell\big)}
\,-\,
(m\ell) ^2 
\frac{1-(\ell\k)^{K+1}}{1-\ell\k}
\Big(\frac{2\ln\ell}{\ell}\Big)^{\ln\ell}
\,.
\end{multline*}
%It remains to study
{\bf Estimation of
$P(
\tau_K^*>
{\kappa^{\ell(1-\e)}}
\,|\,
D_{0}=
(\lk(1-\e'))^m
)\,.$}
We use the inequality of lemma~\ref{premhit} with
$n={\kappa^{\ell(1-\e)}}$
and
$b=\lk(1-\e')$:
$$P\big(
\tau_K^*\leq
{\kappa^{\ell(1-\e)}}
\,\big|\,
D_{0}=
(\lk(1-\e'))^m
\big)\,\leq\,
{\kappa^{\ell(1-\e)}}m
\frac{\cB(0)+\cdots+\cB(K)}{\cB(
\lk(1-\e')
)}\,.
$$
For $\e'$ small enough, 
the large deviation estimates of lemma~\ref{bingd}
imply the existence of a constant $c(\e,\e')>0$ such that,
for $\ell$ large enough,
$$P\big(
\tau_K^*\leq
{\kappa^{\ell(1-\e)}}
\,\big|\,
D_{0}=
(\lk(1-\e'))^m
\big)\,\leq\,
\exp(-c(\e,\e')\ell)\,.\qquad\big(\spadesuit\big)$$
Plugging the inequalities $(\clubsuit)$ and $(\spadesuit)$
into $(\heartsuit)$ we obtain
$$\displaylines{
P\Big(\tau_K^*>
{\kappa^{\ell(1-\e)}}
\,\big|\,
D_0=(\ln\ell)^m 
\Big)
%D_{0}=(\ln\ell)^m\Big)
\,\geq\,
\Big(1-\exp(-c(\e,\e')\ell)\Big)
\qquad\qquad\big(\bigtriangleup\big)\hfill
\cr\hfill
\bigg(
1-m\exp{
\big(
-\frac{1}{2}c(\e')\ell\big)}
\,-\,
(m\ell) ^2 
\frac{1-(\ell\k)^{K+1}}{1-\ell\k}
\Big(\frac{2\ln\ell}{\ell}\Big)^{\ln\ell}
\bigg)
\,.
}$$
We now use the inequalities $(\circ)$, $(\bigtriangledown)$, $(\bigtriangleup)$
to conclude that, for $\ell$ large enough,
\begin{multline*}
P\Big(\tau_K^*>
{\kappa^{\ell(1-\e)}}
\,\big|\,
D_0=(K+1)^m 
\Big)
%D_{0}=(\ln\ell)^m\Big)
\,\geq\\
\Big(
1-
m\exp\Big(-\frac{1}{3}(\ln\ell)^2\Big)\Big)
\,\Big(1-\frac{p}{\kappa}\Big)^{
m\ltq\ln\ell}
\Big(1-\exp(-c(\e,\e')\ell)\Big)\\
\times\bigg(
1-m\exp{
\big(
-\frac{1}{2}c(\e')\ell\big)}
\,-\,
(m\ell) ^2 
\frac{1-(\ell\k)^{K+1}}{1-\ell\k}
\Big(\frac{2\ln\ell}{\ell}\Big)^{\ln\ell}
\bigg)
\,.
\end{multline*}
Moreover, thanks to the Markov inequality,
$$
E\Big(\tau^*_K\,|\,
D_0=(K+1)^m 
\Big)
\,\geq\,
{\kappa^{\ell(1-\e)}}\,
P\Big(\tau_K^*\geq
{\kappa^{\ell(1-\e)}}
\,\big|\,
D_0=(K+1)^m 
\Big)\,.$$
We finally deduce that
$$\liminf_{
\genfrac{}{}{0pt}{1}{\ell,m\to\infty,\,
q\to 0
}
{{\ell q} \to a,\,
\frac{\scriptstyle m}{\scriptstyle \ell}\to\alpha
}
}
\,\frac{1}{\ell}\ln
E\Big(\tau^*_K\,|\,
D_0=(K+1)^m 
\Big)
\,\geq\,(1-\e)\ln\kappa\,.$$
Sending $\e$ to $0$ gives the desired lower bound.
\end{proof}

We estimate next
$E\big(
\tau_K-\tau^*_K
\,|\,
O_0=\oe
\big)$.
Let 
$\phi:\,]0,+\infty[
\,\lra [0,+\infty]$
be the function defined in \cite{Cerf}
by setting
$\phi(a)=0$
if $a\geq \ln\s$
and 
$$
\forall a<\ln\s\qquad
\phi(a)\,=\,
\frac
{ \displaystyle \sigma(1-e^{-a})
\ln\frac{\displaystyle\sigma(1-e^{-a})}{\displaystyle\sigma-1}
+\ln(\sigma e^{-a})}
{
\displaystyle (1-\sigma(1-e^{-a}))
 }\,.
$$
\begin{proposition}\label{persistence}
We have
$$
\lim_{
\genfrac{}{}{0pt}{1}{\ell,m\to\infty,\,
q\to 0
}
{{\ell q} \to a,\,
\frac{\scriptstyle m}{\scriptstyle \ell}\to\alpha
}
}
\frac{1}{m}\ln
E\big(
\tau_K-\tau^*_K
\,|\,
O_0=\oe
\big)\,=\,
\phi(a)\,.
$$
\end{proposition}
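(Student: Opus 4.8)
Since $\s\exa<1$ we have $a>\ln\s$ and therefore $\phi(a)=0$, while $\tau_K-\tau^*_K\geq 1$ always, so $\liminf_{}\frac1m\ln E(\tau_K-\tau^*_K\mid O_0=\oe)\geq 0=\phi(a)$ is automatic; the content of the proposition is thus the upper bound $E(\tau_K-\tau^*_K\mid O_0=\oe)=e^{o(m)}$, which I would prove in the sharper form $m^{C}e^{O(mq)}=e^{o(m)}$ for some constant $C$. The plan is to replace $\Ot$ by the upper process $\Otk$: by proposition~\ref{compok} and the definition of $\preceq$, if $\Ot$ and $\Otk$ are run from $\oe$ with the same randomness, then $O_t(0)+\cdots+O_t(K)\leq O^{K+1}_t(0)+\cdots+O^{K+1}_t(K)$ for all $t$, so the set of times at which $\Ot$ has a chromosome in one of the classes $0,\dots,K$ is contained in the corresponding set for $\Otk$, and it suffices to bound, for $\Otk$ started from $\oe$, the expected length of an excursion outside $\cN_K$. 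Once $\Otk$ has entered $\cW\setminus\cT^{K+1}$ the occupation numbers $(O^{K+1}_t(0),\dots,O^{K+1}_t(K))$ follow the Markov chain $\Ztk$ on $\cE_K$ built in section~\ref{Dynaup}; outside $\cW$ the class $0$ stays empty and the classes $1,\dots,K$ evolve with no selective advantage.

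First I would deal with the master sequence alone. As noted in section~\ref{Indest}, the coordinate $(Z^{K+1}_t(0))_{t\geq 0}$ is an autonomous birth and death chain on $\zm$ with the rates $\d_i,\g_i$ written down in section~\ref{Init}. Since $\s\exa<1$ one has $\phi(\exa,0,\rho)<1$ for every $\rho\in\,]0,1]$, hence in the limit $\d_i/\g_i\to\phi(\exa,0,i/m)<1$ for all $i\geq 1$, so for $\ell$ large and $q$ small the ratios $\d_i/\g_i$ stay bounded away from $1$ and the products $\pi$ are non--increasing. Combined with $\g_1$ being of order $1/m$ and the lower bound $\g_i\geq c/m^2$ from section~\ref{Init}, the explicit hitting--time formula of section~\ref{Bdchains} gives that, started from a single master, the chain $(Z^{K+1}_t(0))_{t\geq 0}$ reaches $0$ in expected time $O(m)$; consequently each maximal interval of the excursion on which a master is present has expected length $O(m)$.

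Next I would control the lingering of the neutral classes $1,\dots,K$. These have fitness $1$ and are fed only by down--mutations, all of whose probabilities $M_H(b,c)$ with $c<b$ tend to $0$. While no master is present I would show that the total count $S_t=O^{K+1}_t(0)+\cdots+O^{K+1}_t(K)$ is dominated by a birth and death chain on $\zm$ absorbed at $0$, with death rate of order $S/m$ and birth rate of order $S/m+O(q)$, the constant multiplying the $S/m$ birth term being strictly smaller than $1$ in the limit — this is where it matters that $M_H(l,k)\to\frac{a^{k-l}}{(k-l)!}e^{-a}$ carries no $\s$ factor, unlike in the master case. Such a chain has restoring level of order $mq=o(m)$, and the birth and death hitting--time formulas together with a Stirling estimate of its $\pi$--products give that it reaches $0$ from a single individual in expected time $e^{O(mq)}=e^{o(m)}$. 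Finally I would decompose the excursion of the classes $0,\dots,K$ for $\Otk$ along the successive times at which a master appears and disappears: a master is (re)created only through a mutation into class $0$, an event which over any window of polynomial length has probability $o(1)$, so the number of master--present phases has a uniform geometric tail; each such phase has expected length $O(m)$ by the previous paragraph, each intervening master--absent phase has expected length $e^{O(mq)}$ by the domination just described, and summing over phases via the strong Markov property yields $E(\tau_K-\tau^*_K\mid O_0=\oe)\leq m^{C}e^{O(mq)}=e^{o(m)}$.

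The main obstacle is this last step. One must carry out the stochastic domination of $S_t$ by an explicit birth and death chain uniformly along the whole excursion, in particular during the master--present phases, where the $\s$--fitness of class $0$ has to be absorbed into the constants; and one must make rigorous the bookkeeping of the number and lengths of the alternating phases and of the repeated small injections of mass from the overwhelming class $K+1$ into class $K$, checking that they only inflate the expected excursion length by polynomial--in--$m$ and $e^{O(mq)}$ factors. For $K=0$ the statement — where $\tau_K-\tau^*_K$ is just the persistence time of the master sequence — is already contained in \cite{Cerf}, and the argument above is the natural way to propagate it to all $K$.
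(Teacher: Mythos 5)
You have isolated the right difficulty, but the step you yourself call the main obstacle is exactly where the proof has to happen, and the one quantitative claim you offer for it is false. From a no--master state with $S$ chromosomes in the classes $1,\dots,K$, the probability of creating a master in a single step is of order $Sp/(\kappa m)\asymp S/(\ell m)$, so already over windows of length $\ell m$ (polynomial in $m$, and far shorter than the $e^{O(mq)}$ you allow for the extinction of the neutral classes) the re--creation probability is not $o(1)$. Worse, the probability of re--creating a master before the classes $1,\dots,K$ empty is not bounded away from $1$: starting from $o'=(0,m,0,\dots,0)$, the paper can only bound the complementary probability $P_{o'}(\tau(\cN_K)<\tau(\cW))$ from below by roughly $\big(1-\frac{p}{\kappa}\big)^{m\ell^{3/4}\ln\ell}=e^{-o(m)}$, which is not bounded away from $0$. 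Hence the number of master--present phases in one excursion is not geometric with a uniform parameter; it can be of size $e^{o(m)}$, and nothing in your proposal controls it or the way it multiplies the $e^{O(mq)}$ lengths of the master--absent stretches. Two further problems: (i) the excursion of $\Otk$ that straddles $\tau^*_K$ for $\Ot$ is not a freshly started excursion, so your first reduction needs a bound uniform over all states of $\cW_K$; moreover the upper process is reset to $o^{K+1}_{\text{exit}}=(0,m,0,\dots,0)$ (the whole population in class $1$) each time its master dies, so for $K\geq1$ its excursions above $\cN_K$ are systematically longer and re--creation during them more likely --- the domination does not simplify the hard part. (ii) Even if completed, your argument only proves the statement when $\sigma e^{-a}<1$, i.e.\ $\phi(a)=0$; the paper proves the limit $\phi(a)$ for general $a$, with a genuinely nontrivial lower bound (the one--step passage $o_3\to o_2$, of probability at least $M_H(K,0)/m^2$, followed by the master--sequence persistence estimate of \cite{Cerf}), and it is this general form that is used at the end of the section to compare $e^{m\phi(a)}$ with $E(\tau^*_K)\approx\kappa^{\ell}$.

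The paper's upper bound bypasses your delicate analysis of the decay of the neutral classes altogether. By monotonicity the entrance state is sandwiched between $o_3$ and $o_1=(1,m-1,0,\dots,0)$, and one then solves the renewal--type inequality
$E_{o_1}\big(\tau(\cN_K)\big)\leq\big(E_{o_1}(\tau(\cN))+E_{o'}(\tau(\cW\cup\cN_K))\big)\big/P_{o'}\big(\tau(\cN_K)<\tau(\cW)\big)$,
where $o'=(0,m,0,\dots,0)$. The middle expectation is bounded by $m^2/M_H(K,0)$ through the elementary remark that from any no--master state dominating $o_3$ a master appears at the very next step with probability at least $M_H(K,0)/m^2$ --- no dominating birth--and--death chain with restoring level $O(mq)$, no Stirling estimates, no uniform control of $S_t$ during master--present phases is needed --- while the denominator is precisely the hitting probability discussed above, bounded below via the event $\cE$ and the distance--process estimates imported from \cite{Cerf}; the first expectation is the master persistence time, of order $e^{m\phi(a)}$ by corollary 9.2 of \cite{Cerf}. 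If you wish to keep your route, you must replace the ``geometric tail over polynomial windows'' claim by a genuine estimate of $P_{o'}(\tau(\cN_K)<\tau(\cW))$ (or an equivalent control of the number of re--creations), and you should retain the lower bound, which is not vacuous when $a<\ln\sigma$.
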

\begin{proof}
For any subset $E\subset\pml$
we define the hitting time of $E$ by
$$\tau(E)\,=\,
\inf\big\lbrace\,
t\geq 0
:
O_t\in E
\,\big\rbrace\,.$$
Let us define also the following occupancy distributions: 
$$
o_1\,=\,(1,m-1,0,\dots,0)\,,\qquad\qquad
o_2\,=\,(1,0,\dots,0,m-1)\,,$$
$$\forall l\in\zl\qquad 
o_3(l)\,=\,\begin{cases}
\quad 1 & \quad \text{ if }\ l=K\,,\\
\quad m-1 & \quad \text{ if }\ l=\ell\,,\\
\quad 0 & \quad \text{ otherwise}\,.
\end{cases}
$$
Thanks to the monotonicity of the process $\Ot$ we have
$$
E\big(
\tau(\cN_K)
\,|\,
O_0=o_3
\big)
\,\leq\,
E\big(
\tau_K-\tau^*_K
\,|\,
O_0=\oe
\big)\,\leq\,
E\big(
\tau(\cN_K)
\,|\,
O_0=o_1
\big)\,.
$$
We prove first the lower bound, i.e.,
$$
\liminf_{
\genfrac{}{}{0pt}{1}{\ell,m\to\infty,\,
q\to 0
}
{{\ell q} \to a,\,
\frac{\scriptstyle m}{\scriptstyle \ell}\to\alpha
}
}
\frac{1}{m}\ln
E\big(
\tau(\cN_K)
\,|\,
O_0=o_3
\big)
\,\geq\,
\phi(a)\,.
$$
Indeed, we have
\begin{multline*}
E\big(
\tau(\cN_K)
\,|\,
O_0=o_3
\big)
\,\geq\\
E\big(
\tau(\cN_K)
\,|\,
O_0=o_3,
O_1=o_2
\big)P\big(
O_1=o_2
\,|\,
O_0=o_3
\big)\,.
\end{multline*}
On one hand, thanks to the Markov property and the monotonicity of the process
$$
E\big(
\tau(\cN_K)
\,|\,
O_0=o_3,
O_1=o_2
\big)
\,\geq\,
1+
E\big(
\tau(\cN_K)
\,|\,
O_0=o_2
\big)\,.
$$
Since
$
\tau(\cN_K)
\geq 
\tau(\cN)
$,
thanks to the estimate for $\tau(\cN)$ (corollary 9.2 of \cite{Cerf}), we obtain
\begin{multline*}
\liminf_{
\genfrac{}{}{0pt}{1}{\ell,m\to\infty,\,
q\to 0
}
{{\ell q} \to a,\,
\frac{\scriptstyle m}{\scriptstyle \ell}\to\alpha
}
}
\frac{1}{m}\ln
E\big(
\tau(\cN_K)
\,|\,
O_0=o_2
\big)
\,\geq\\
\liminf_{
\genfrac{}{}{0pt}{1}{\ell,m\to\infty,\,
q\to 0
}
{{\ell q} \to a,\,
\frac{\scriptstyle m}{\scriptstyle \ell}\to\alpha
}
}
\frac{1}{m}\ln
E\big(
\tau(\cN)
\,|\,
O_0=o_2
\big)
\,\geq\,
\phi(a)\,.
\end{multline*}
On the other hand, we have
\begin{multline*}
P\big(
O_1=o_2
\,|\,
O_0=o_3
\big)\,=\,
\frac{1}{m^2}\big(
M_H(K,0)+
(m-1)M_H(\ell,0)
\big)\\
\geq\,
\frac{1}{m^2}M_H(K,0)\,=\,
\frac{1}{m^2}
\Big(
1-p\big(
1-\frac{1}{\k}
\big)
\Big)^{\ell-K}
\Big(
\frac{p}{\k}
\Big)^K\,.
\end{multline*}
Thus,
$$
\liminf_{
\genfrac{}{}{0pt}{1}{\ell,m\to\infty,\,
q\to 0
}
{{\ell q} \to a,\,
\frac{\scriptstyle m}{\scriptstyle \ell}\to\alpha
}
}
\frac{1}{m}\ln
P\big(
O_1=o_2
\,|\,
O_0=o_3
\big)\,\geq\,0\,.
$$
The above inequalities give the desired lower bound:
$$
\liminf_{
\genfrac{}{}{0pt}{1}{\ell,m\to\infty,\,
q\to 0
}
{{\ell q} \to a,\,
\frac{\scriptstyle m}{\scriptstyle \ell}\to\alpha
}
}
\frac{1}{m}\ln
E\big(
\tau(\cN_K)
\,|\,
O_0=o_3
\big)
\,\geq\,
\phi(a)\,.
$$
We prove next the upper bound:
$$
\limsup_{
\genfrac{}{}{0pt}{1}{\ell,m\to\infty,\,
q\to 0
}
{{\ell q} \to a,\,
\frac{\scriptstyle m}{\scriptstyle \ell}\to\alpha
}
}
\frac{1}{m}\ln
E\big(
\tau(\cN_K)
\,|\,
O_0=o_1
\big)
\,\leq\,
\phi(a)\,.
$$
To alleviate the notation we denote by
$P_o$ and $E_o$
the probability and the expectation for the process
$\Ot$
starting from the occupancy distribution~$o$.
Let $o'$ be the occupancy distribution given by
$o'=(0,m,0,\dots,0)$.
We have
$$
E_{o_1}\big(
\tau(\cN_K)
\big)
\,=\,
E_{o_1}\big(
\tau(\cN)
\big)+
E_{o_1}\big(
\tau(\cN_K)-\tau(\cN)
\big)
\,.
$$
Yet, thanks to the strong Markov property
and the monotonicity of the process, and since 
$\tau(\cN_K)\geq\tau(\cN)$,
\begin{multline*}
E_{o_1}\big(
\tau(\cN_K)-\tau(\cN)
\big)\,=\,
\sum_{o\in\cN}
E_{o}\big(
\tau(\cN_K)
\big)
P_{o_1}\big(
O_{\tau(\cN)}=o
\big)
\\
\leq\,
\sum_{o\in\cN}
E_{o'}\big(
\tau(\cN_K)
\big)
P_{o_1}\big(
O_{\tau(\cN)}=o
\big)\,=\,
E_{o'}\big(
\tau(\cN_K)
\big)\,.
\end{multline*}
%D'où
%\begin{multline*}
%E_{o'}\big(
%\tau(\cN_K)
%\big)
%\,=\,
%E_{o'}\big(
%\tau(\cN)
%\big)+
%E_{o'}\big(
%\tau(\cN_K)-\tau(\cN)
%\big)
%\\
%\leq\,
%E_{o'}\big(
%\tau(\cN)
%\big)+
%E_{o''}\big(
%\tau(\cN_K)
%\big)\,.
%\end{multline*}
We develop this last expectation as follows:
$$
E_{o'}\big(
\tau(\cN_K)
\big)\,=\,
E_{o'}\big(
\tau(\cW\cup\cN_K)
\big)+
E_{o'}\big(
\tau(\cN_K)-\tau(\cW\cup\cN_K)
\big)\,.$$
Yet,
\begin{multline*}
E_{o'}\big(
\tau(\cN_K)-\tau(\cW\cup\cN_K)
\big)\\
=\,
\sum_{o\in\cW}
E_{o}\big(
\tau(\cN_K)
\big)
P_{o'}\big(O_{\tau(\cW\cup\cN_K)}=o
\big)
\\
\leq\,
\sum_{o\in\cW}
E_{o_1}\big(
\tau(\cN_K)
\big)
P_{o'}\big(O_{\tau(\cW\cup\cN_K)}=o
\big)
\\
=\,
E_{o_1}\big(
\tau(\cN_K)
\big)
P_{o'}\big(
\tau(\cW)<\tau(\cN_K)\big)
\,.
\end{multline*}
Thus,
$$ 
E_{o'}\big(
\tau(\cN_K)
\big)
\,\leq\,
E_{o'}\big(
\tau(\cW\cup\cN_K)
\big)+
E_{o_1}\big(
\tau(\cN_K)
\big)
P_{o'}\big(
\tau(\cW)<\tau(\cN_K)
\big)\,.
$$
Therefore,
$$E_{o_1}\big(
\tau(\cN_K)
\big)
\,\leq\,
\frac{1}{
\displaystyle
P_{o'}\big(
\tau(\cN_K)<\tau(\cW)
\big)
}
\Big(
E_{o_1}\big(
\tau(\cN)
\big)+
E_{o'}\big(
\tau(\cW\cup\cN_K)
\big)
\Big)\,.
$$
We estimate next the three terms 
appearing on the right--hand side of this formula.
We control the first expectation with the help of the estimate
for the persistence time of the master sequence (corollary 9.2 of \cite{Cerf}) 
and we obtain:
$$
\limsup_{
\genfrac{}{}{0pt}{1}{\ell,m\to\infty,\,
q\to 0
}
{{\ell q} \to a,\,
\frac{\scriptstyle m}{\scriptstyle \ell}\to\alpha
}
}
\frac{1}{m}\ln
E_{o_1}\big(
\tau(\cN)
\big)
\,\leq\,
\phi(a)\,.
$$
We bound from below the probability in the denominator 
using the estimates on the discovery time $\tau^*_K$.
On the event
$$\cE\,=\,\big\{\,
\forall n\leq m\ltq\quad\forall l\leq\ln\ell\quad
U_{n,l}>p/\kappa\,\big\}\,,$$
if
$D_{m\ltq}\geq
(\ln \ell)^m$,
we have
$\tau(\cN_K)<\tau(\cW)$. 
Therefore, using $\big(\bigtriangledown\big)$,
\begin{multline*}
P_{o'}\big(
\tau(\cN_K)<\tau(\cW)
\big)\,\geq
\,P\big( 
D_{m\ltq}\geq
(\ln \ell)^m
,\,\cE
\,|\,
D_0=(1)^m
\big)\,\geq\\
\Big(
1-
m\exp\Big(-\frac{1}{3}(\ln\ell)^2\Big)\Big)
\,\Big(1-\frac{p}{\kappa}\Big)^{
m\ltq\ln\ell}\,.
\end{multline*}
It remains to estimate the expectation
$E_{o'}\big(
\tau(\cW\cup\cN_K)
\big)$.
We estimate first, for
$n\geq 0$,
the probability
$
P_{o'}\big(
\tau(\cW\cup\cN_K)>n
\big)
$.
We have
\begin{multline*}
P_{o'}\big(
\tau(\cW\cup\cN_K)>n
\big)
\,=\\
\sum_{o\in\cW_K\setminus\cW}
P_{o'}\big(
\tau(\cW\cup\cN_K)>n,
O_{n-1}=o,
\tau(\cW\cup\cN_K)>n-1
\big)
\\
=\,
\sum_{o\in\cW_K\setminus\cW}
P_{o'}\big(
\tau(\cW\cup\cN_K)>n
\,|\,
O_{n-1}=o,
\tau(\cW\cup\cN_K)>n-1
\big)\\
\times P_{o'}\big(
O_{n-1}=o,
\tau(\cW\cup\cN_K)>n-1
\big)
\,.
\end{multline*}
Thanks to the Markov property
and the monotonicity of the process,
we have for all
$o\in\cW_K\setminus\cW$,
\begin{multline*}
P_{o'}\big(
\tau(\cW\cup\cN_K)>n
\,|\,
O_{n-1}=o,
\tau(\cW\cup\cN_K)>n-1
\big)
\,=\\
P_o\big(
\tau(\cW\cup\cN_K)>1
\big)
\,=\,
1-P_{o}\big(
\tau(\cW\cup\cN_K)=1
\big)
\\
\leq\,
1-P_{o}\big(
\tau(\cW)=1
\big)
\,\leq\,
1-P\big(
\tau(\cW)=1
\,|\,
O_0=o_3
\big)\\
\leq\,
1-\frac{M_H(K,0)}{m^2}\,.
\end{multline*}
Thus,
$$
P_{o'}\big(
\tau(\cW\cup\cN_K)>n
\big)
\,\leq\,
\Big(
1-\frac{M_H(K,0)}{m^2}
\Big)
P_{o'}\big(
\tau(\cW\cup\cN_K)>n-1
\big)\,.
$$
We iterate this inequality and we obtain
$$
P_{o'}\big(
\tau(\cW\cup\cN_K)>n
\big)
\,\leq\,
\Big(
1-\frac{M_H(K,0)}{m^2}
\Big)^n\,.
$$
Finally,
\begin{multline*}
E_{o'}\big(
\tau(\cW\cup\cN_K)
\big)
\,=\,
\sum_{n\geq 0}
P_{o'}\big(
\tau(\cW\cup\cN_K)>n
\big)
\\
\leq\,
\sum_{n\geq 0}
\Big(
1-\frac{M_H(K,0)}{m^2}
\Big)^n
\,=\,
\frac{m^2}{M_H(K,0)}
\,=\,
\frac{m^2}
{\displaystyle
\Big(
1-p\big(
1-\frac{1}{\k}
\big)
\Big)^{\ell-K}
\Big(
\frac{p}{\k}
\Big)^K
}
\,.
\end{multline*}
We put together the above inequalities and we obtain the desired upper bound.
\end{proof}

Let 
$\phi:\,]0,+\infty[
\,\ra [0,+\infty]$
be the function defined by
$\phi(a)=0$
if $a\geq \ln\s$
and 
$$
\forall a<\ln\s\qquad
\phi(a)\,=\,
\frac
{ \displaystyle \sigma(1-e^{-a})
\ln\frac{\displaystyle\sigma(1-e^{-a})}{\displaystyle\sigma-1}
+\ln(\sigma e^{-a})}
{
\displaystyle (1-\sigma(1-e^{-a}))
 }\,.
$$
From the estimates obtained in this section
we conclude that for
${\alpha\in[0,+\infty[}$ or $\alpha=+\infty$,
$$\lim_{
\genfrac{}{}{0pt}{1}{\ell,m\to\infty,\,
q\to 0
}
{{\ell q} \to a,\,
\frac{\scriptstyle m}{\scriptstyle \ell}\to\alpha
}
}
\frac{
E\big(
\tau_K-\tau^*_K
\,|\,
O_0=\oe
\big)}
{
\displaystyle
E\big(\tau^*_K\,|\,
O_0=\oe
\big)
}\,=
\begin{cases}
\quad 0 &\text{if }\alpha\,\phi(a)<\ln\kappa\\
\,\,+\infty &\text{if }\alpha\,\phi(a)>\ln\kappa\\
\end{cases}
$$
In particular, if
$\alpha\,\phi(a)<\ln\kappa$,
we have
$$\lim_{
\genfrac{}{}{0pt}{1}{\ell,m\to\infty,\,
q\to 0
}
{{\ell q} \to a,\,
\frac{\scriptstyle m}{\scriptstyle \ell}\to\alpha
}
}\,
\int_{[0,1]}f\,d\nu_K\,=\,
0\,.
$$

\section{Synthesis}
Let us look at the formula given 
at the end of section~\ref{Bounds}:
\begin{multline*}
\int_{\textstyle\pml}
f\bigg( \frac{ \ozk }{m} \bigg)
\,d\mu_O^\theta(o)
\,=\,
\cr
\frac{
\displaystyle
E\bigg(\int_0^{\tau^*}
f\bigg( \frac{ 
O^\theta_s(0)+\cdots+O^\theta_s(K)
 }{m} \bigg)
\,ds\,\Big|\,
O^\theta_0=\ote
\bigg)
}{
\displaystyle
E\big(\tau^*\,|\,
O^\theta_0=\ote
\big)+
E\big(\tau_0\,|\,
Z^\t_0=z^\t\big)
}\\
+\frac{
\displaystyle
1
+
E\big(\tau_0\,|\,
Z^\t_0=z^\t\big)
}{
\displaystyle
E\big(\tau^*\,|\,
O^\theta_0=\ote
\big)+
E\big(\tau_0\,|\,
Z^\t_0=z^\t\big)
}
\int_{\textstyle\cE_K}
f\Big( \frac{ z_0+\cdots+z_K }{m} \Big)
d\nu^\theta(z)
\,.
\end{multline*}
The integral appearing in the first term of the right--hand side 
is bounded as follows:
$$
0\,\leq\,
E\bigg(\int_0^{\tau^*}
f\bigg( \frac{ 
O^\theta_s(0)+\cdots+O^\theta_s(K)
 }{m} \bigg)
\,ds\bigg)
\,\leq\,
f(1)E\big(\tau^*\,|\,
O^\theta_0=\ote
\big)\,.
$$
The stopping times $\tau^*$ and $\tau_0$
are the same as the discovery time of the master sequence
and the persistence time of the master sequence from \cite{Cerf}.
As shown in \cite{Cerf}, the following estimates hold:
\begin{align*}
\lim_{
\genfrac{}{}{0pt}{1}{\ell,m\to\infty}
{q\to 0,\,
{\ell q} \to a}
}
\frac{1}{\ell}\ln
E\big(\tau^*\,|\,
O^\theta_0=\ote
\big)\,&=\,
\ln\k\,,\\
\lim_{
\genfrac{}{}{0pt}{1}{\ell,m\to\infty}
{q\to 0,\,
{\ell q} \to a}
}
\frac{1}{m}\ln
E\big(\tau_0\,|\,
Z^\t_0=z^\t\big)\,&=\,
\phi(a)\,,
\end{align*}
where the function 
$\phi:{\mathbb R}^+\to
{\mathbb R}^+\cup\{\,+\infty\,\}$ 
is given by
$\phi(a)=0$
if $a\geq\ln\sigma$,
and
$$
\forall a<\ln\sigma\qquad
\phi(a)\,=\,
\frac
{ \displaystyle \sigma(1-e^{-a})
\ln\frac{\displaystyle\sigma(1-e^{-a})}{\displaystyle\sigma-1}
+\ln(\sigma e^{-a})}
{
\displaystyle (1-\sigma(1-e^{-a}))
}\,.
\index{$\phi(a)$}
$$
Therefore,
$$\lim_{
\genfrac{}{}{0pt}{1}{\ell,m\to\infty,\,
q\to 0
}
{{\ell q} \to a,\,
\frac{\scriptstyle m}{\scriptstyle \ell}\to\alpha
}
}
\frac{
E\big(\tau_0\,|\,
Z^\t_0=z^\t\big)}
{
\displaystyle
E\big(\tau^*\,|\,
O^\theta_0=\ote
\big)
}\,=
\begin{cases}
\quad 0 &\text{si }\alpha\,\phi(a)<\ln\kappa\,,\\
\,\,+\infty &\text{si }\alpha\,\phi(a)>\ln\kappa\,.\\
\end{cases}
$$
This, together with the results form the previous section
and section~\ref{Convergence} imply that

$\bullet$ 
If $\a\phi(a)<\ln\kappa$, then
$$\lim_{
\genfrac{}{}{0pt}{1}{\ell,m\to\infty,\,
q\to 0
}
{{\ell q} \to a,\,
\frac{\scriptstyle m}{\scriptstyle \ell}\to\alpha
}
}
\int_{\textstyle\pml}
f\bigg( \frac{ \ozk }{m} \bigg)
\,d\mu_O^\theta(o)
\,=\,0\,.
$$

$\bullet$ 
If $\a\phi(a)>\ln\kappa$, then
$$\lim_{
\genfrac{}{}{0pt}{1}{\ell,m\to\infty,\,
q\to 0
}
{{\ell q} \to a,\,
\frac{\scriptstyle m}{\scriptstyle \ell}\to\alpha
}
}
\int_{\textstyle\pml}
f\bigg( \frac{ \ozk }{m} \bigg)
\,d\mu_O^\theta(o)
\,=\,
f(\rho^*_0+\cdots+\rho^*_K)\,.
$$
\nocite{DBMJ}
\nocite{SAA1}
\nocite{PEM}

%Should include
%   Bibliography
%   List of acronyms (optional)
%   Index (optional)
\bibliographystyle{plain}
\bibliography{qart}
\end{document}